\documentclass[a4paper,11pt,reqno]{amsart}
\usepackage[utf8]{inputenc}
\usepackage{amsmath}
\usepackage{amssymb}
\usepackage{amsthm}
\usepackage{hyperref}
\usepackage{color}
\usepackage{a4wide}
\usepackage{dsfont}
\usepackage{mathdots}
\usepackage{tikz-cd}
\makeatletter

\newcommand{\fraka}{\mathfrak{a}}

\newcommand{\frakn}{\mathfrak{n}}

\newcommand{\CC}{\mathbb{C}}

\newcommand{\NN}{\mathbb{N}}

\newcommand{\RR}{\mathbb{R}}
\newcommand{\ZZ}{\mathbb{Z}}

\newcommand{\calD}{\mathcal{D}}

\newcommand{\calO}{\mathcal{O}}

\newcommand{\R}{\mathbb{R}}
\newcommand{\C}{\mathbb{C}}

\DeclareMathOperator{\GL}{GL}
\DeclareMathOperator{\SL}{SL}
\DeclareMathOperator{\upO}{O}

\DeclareMathOperator{\upU}{U}

\DeclareMathOperator{\Ind}{Ind}
\DeclareMathOperator{\tr}{tr}

\DeclareMathOperator{\ad}{ad}

\DeclareMathOperator{\Hom}{Hom}

\DeclareMathOperator{\rest}{rest}

\DeclareMathOperator{\id}{id}
\DeclareMathOperator{\sgn}{sgn}
\DeclareMathOperator{\const}{const}
\DeclareMathOperator{\diag}{diag}

\renewcommand\Re{\operatorname{Re}}
\renewcommand\Im{\operatorname{Im}}

\newcommand{\ps}{\textup{p.s.}}

\theoremstyle{plain}
\newtheorem{theorem}{Theorem}[section]
\newtheorem{proposition}[theorem]{Proposition}
\newtheorem{lemma}[theorem]{Lemma}
\newtheorem{corollary}[theorem]{Corollary}

\newtheorem{thmalph}{Theorem}

\theoremstyle{definition}

\newtheorem{remark}[theorem]{Remark}

\title[Branching laws for Stein's complementary series and Speh representations]{Branching laws for Stein's complementary series and Speh representations of $\mathrm{GL}(2n,\RR)$}

\author{Jonathan Ditlevsen}
\address{Graduate School of Mathematical Sciences, The University of Tokyo, 3-8-1 Komaba, Meguro-ku, Tokyo 153-8914, Japan}
\email{JonathanDitlevsen@gmail.com}
\author{Jan Frahm}
\address{Department of Mathematics, Aarhus University, Ny Munkegade 118, 8000 Aarhus C, Denmark}
\email{frahm@math.au.dk}

\begin{document}

\begin{abstract}
    We obtain the explicit direct integral decomposition of Stein's complementary series representations and Speh representations of $\GL(2n,\RR)$ when restricted to the subgroup $\GL(2n-1,\RR)$. The decomposition is a direct integral of unitarily induced representations from a maximal parabolic subgroup of $\GL(2n-1,\RR)$ with Levi factor $\GL(2n-2,\RR)\times\GL(1,\RR)$, where the induction data consists of a complementary series or Speh representation of the factor $\GL(2n-2,\RR)$ with the same parameter as the one of $\GL(2n,\RR)$ and a character of $\GL(1,\RR)$. These results are in line with the theory of adduced representations.\\
    The main tools in the proof are two families of symmetry breaking operators between degenerate series representations of $\GL(2n,\RR)$ and $\GL(2n-1,\RR)$ whose meromorphic properties are studied in great detail.
\end{abstract}

\maketitle

\section*{Introduction}

In the representation theory of the general linear group over the real numbers, there are two classes of representations that serve as building blocks for all irreducible unitary representations: Speh representations and complementary series representations (see \cite{Vog86} for details on this viewpoint). The simplest class of complementary series representations is the real counterpart of the one for complex groups discovered by Stein~\cite{Ste67}. Both Stein's complementary series and Speh representations only exist in even dimensions, i.e. for $G=\GL(2n,\RR)$. 

In this paper, we study the problem of decomposing their restriction to the subgroup $H=\GL(2n-1,\RR)$ into a direct integral of irreducible unitary representations. We show that they decompose into a direct integral of unitarily induced representations from a maximal parabolic subgroup of $H$ with Levi factor $\GL(2n-2,\RR)\times\GL(1,\RR)$, where the induction data consists of a representation of $\GL(2n-2,\RR)$ of the same type and parameter as the one of $\GL(2n,\RR)$ and a character of $\GL(1,\RR)$.

For this, we view both Stein's complementary series and Speh representations as sitting inside the same family of degenerate series representations, a family of non-unitarily induced representations from a maximal parabolic subgroup of $G$. This family is essentially parameterized by a complex number and contains four different classes of irreducible unitarizable representations:
\begin{itemize}
    \item Unitary degenerate series representations,
    \item Stein's complementary series representations,
    \item Small irreducible quotients and
    \item Speh representations.
\end{itemize}

Our approach to the decomposition problem is to first decompose the restriction to $H$ of all unitary degenerate series representations. This is reasonably straightforward since the unitary degenerate series consists of unitarily induced representations, so we can apply the Mackey machinery. The resulting decomposition can be made explicit in the sense of a Plancherel formula, i.e. a decomposition of vectors in the representation space into their irreducible components with respect to $H$. The Plancherel formula involves a family of intertwining operators from the degenerate series of $G$ to a certain degenerate series of $H$ induced from a non-maximal parabolic subgroup, so-called symmetry breaking operators, a term coined by Kobayashi~\cite{Kob15}. The main body of this paper consists of a detailed study of these operators, in particular their meromorphic dependence on the representations parameters, resulting in an analytic continuation of the Plancherel formula to those parameters that correspond to complementary series representations or irreducible quotients such as the Speh representations. This technique has previously been applied in other settings, see \cite{Wei24} for $(G,H)=(\upO(1,n+1),\upO(1,n))$ and \cite{FW} for $(G,H)=(\upU(1,n+1),\upU(1,n))$, and there is more work in progress for other cases such as $(G,H)=(\GL(3,\RR),\GL(2,\RR))$ or $(\upO(p+1,q+1),\upO(p+1,q))$.

We remark that the direct integral decompositions obtained in this paper can also be deduced from the theory of adduced representations (see \cite{Sah89}) and the computation of adduced representations for Stein's complementary series by Sahi~\cite{Sah90} and for Speh representations by Sahi--Stein~\cite{SS90} (see Remark~\ref{rem:Adduced} for details). Our proof differs from this alternative approach in the way that we simultaneously decompose \emph{all} irreducible unitary representations that occur in the degenerate series of $G$ by an analytic continuation procedure. In particular, we also obtain the direct integral decomposition for the small irreducible quotients without additional work. Moreover, our decomposition is explicit in the sense that we are able to decompose vectors in the representation spaces using symmetry breaking operators. The analysis of families of symmetry breaking operators has in the past decade developed into a research topic on its own (see \cite{Kob15} for an overview), and this paper can also be viewed as a contribution to this study.

We now describe our results in some detail.

\subsection*{Branching laws for unitary representations}

Let $G=\GL(2n,\RR)$ and consider the standard parabolic subgroup $P_G\subseteq G$ with Levi factor $\GL(n,\RR)\times \GL(n,\RR)$. We form the degenerate series representations 
\[
\pi_{\xi,\lambda}=\Ind_{P_G}^G(\xi\otimes e^\lambda\otimes 1)
\]
induced from a character of $P_G$ which is parameterized by $\xi\in (\ZZ/2\ZZ)^2$ and $\lambda\in \CC^2$ (see Section~\ref{sec:DegSerG} for details). For $\lambda\in i\RR^2$, the representations $\pi_{\xi,\lambda}$ are unitary and irreducible on the space of $L^2$-sections of the corresponding line bundle over $G/P_G$. Recall that between the principal series representations we have a holomorphic family of Knapp--Stein intertwining operators (see Section~\ref{sec:KSforG})
\[
\mathbf{T}_{\xi,\lambda}:\pi_{\xi,\lambda}\to \pi_{w_G(\xi,\lambda)},
\]
where $w_G$ is the unique non-trivial element of the Weyl group associated with $P_G$. We can use these intertwining operators to produce an invariant Hermitian form on $\pi_{\xi,\lambda}$ in the case where $\xi_1=\xi_2$ and $\lambda_1=-\lambda_2\in\RR$, and when this form is positive definite resp. semidefine the degenerate series $\pi_{\xi,\lambda}$ resp. its quotient $\pi_{\xi,\lambda}/\ker(\mathbf{T}_{\xi,\lambda})$ is irreducible and unitarizable. This yields the following list of irreducible unitary representations of $G$ for $\xi_1=\xi_2$ (see Section~\ref{sec:UnirrepsG} for details):
\begin{enumerate}
    \item \emph{Unitary degenerate series} $\pi_{\xi,\lambda}^{\mathrm{unit.}}=\pi_{\xi,\lambda}$ for $\lambda\in i\RR^2$,
    \item \emph{Stein's complementary series} $\pi_{\xi,\lambda}^{\operatorname{c.s.}}=\pi_{\xi,\lambda}$ for $\lambda_1=-\lambda_2\in(-\frac{1}{2},\frac{1}{2})\setminus\{0\}$,
    \item \emph{Small irreducible quotients} $\pi_{\xi,\lambda}^{\operatorname{small}}=\pi_{\xi,\lambda}/\ker(\mathbf{T}_{\xi,\lambda})$ for $\lambda_1=-\lambda_2\in \{\tfrac{1}{2},1,\dots,\frac{n}{2}\}$,
    \item \emph{Speh representations} $\pi_{\xi,\lambda}^{\operatorname{Speh}}=\pi_{\xi,\lambda}/\ker(\mathbf{T}_{\xi,\lambda})$ for $\lambda_1=-\lambda_2\in \{-\frac{1}{2},-\frac{3}{2},-\frac{5}{2},\ldots\}$.
\end{enumerate}

Our main result is the explicit decomposition of the restriction of these representations to the subgroup $H=\GL(2n-1,\RR)$ into a direct integral of irreducible unitary representations. To describe the representations of $H$ that appear in this decomposition, we consider the maximal parabolic subgroup $Q_H$ of $H$ with Levi factor $\GL(2n-2)\times \GL(1,\RR)$. In the same way as above for $\GL(2n,\RR)$, we denote by $\varpi_{\xi,\lambda}^{\mathrm{unit.}}$, $\varpi_{\xi,\lambda}^{\textrm{c.s.}}$, $\varpi_{\xi,\lambda}^{\textrm{small}}$ and $\varpi_{\xi,\lambda}^{\textrm{Speh}}$ the unitary degenerate series, Stein's complementary series, small irreducible quotients and Speh representations of $\GL(2n-2,\RR)$, respectively. Furthermore, we write $\chi_{\eta,\nu}$ for the character $x\mapsto |x|^\nu_\eta=\sgn(x)^\eta|x|^\nu$ ($\eta\in \ZZ/2\ZZ$, $\nu\in \CC$) of $\GL(1,\RR)=\RR^\times$. For $\varpi$ an irreducible unitary representation of $\GL(2n-2,\RR)$ and $\nu\in i\RR$ the parabolically induced representation $\Ind_{Q_H}^H(\varpi\otimes\chi_{\eta,\nu})$ is irreducible and unitary (see Section~\ref{sec:UnirrepsH}).

Our main result is the following decomposition: 

\begin{thmalph}[see Theorem~\ref{thm:BranchingLaws} and Corollary~\ref{cor:BranchingLaws}]\label{thm:BranchingLawsIntro}
For $\square\in \{\operatorname{unit.},\operatorname{c.s.},\operatorname{small},\operatorname{Speh}\}$:
\[
\pi_{\xi,\lambda}^\square|_H\simeq \bigoplus_{\eta\in\ZZ/2\ZZ}\int_{i\RR_+}^\oplus \Ind_{Q_H}^H(\varpi_{\xi,\lambda}^\square\otimes\chi_{\eta,\nu})\,d\nu.
\]
\end{thmalph}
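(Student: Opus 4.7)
The plan is to treat the unitary degenerate series first by Mackey theory and then propagate the decomposition to the remaining three cases by meromorphic continuation of an explicit Plancherel identity mediated by a family of symmetry breaking operators (SBOs). For $\lambda\in i\RR^2$, $\pi_{\xi,\lambda}$ is unitarily induced from the maximal parabolic $P_G$ of $G$ with Levi $\GL(n,\RR)\times\GL(n,\RR)$, so I would decompose the real Grassmannian $G/P_G$ into $H$-orbits. The open orbit carries all the spectral content, and a direct computation should identify its stabiliser with a conjugate of $Q_H$ in such a way that the restriction of the inducing character $\xi\otimes e^\lambda\otimes 1$, together with a Mackey parameter $(\eta,\nu)\in\ZZ/2\ZZ\times i\RR_+$, reproduces the inducing data $\varpi_{\xi,\lambda}^{\mathrm{unit.}}\otimes\chi_{\eta,\nu}$. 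This already yields the direct integral for $\square=\operatorname{unit.}$, and I would upgrade it to an explicit Plancherel identity: a holomorphic family
\[
A_{\xi,\lambda,\eta,\nu}:\pi_{\xi,\lambda}\to\Ind_{Q_H}^H\bigl(\varpi_{\xi,\lambda}\otimes\chi_{\eta,\nu}\bigr)
\]
of SBOs such that $f\mapsto (A_{\xi,\lambda,\eta,\nu}f)_{\eta,\nu}$ is an isometry onto the direct integral against an explicit Plancherel density.

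For $\lambda$ off the imaginary axis, both sides of this identity continue to make sense as algebraic objects, and the operators $A_{\xi,\lambda,\eta,\nu}$, the Plancherel density, and the invariant Hermitian forms defined on $\pi_{\xi,\lambda}$ and on $\varpi_{\xi,\lambda}$ by the Knapp--Stein operators all depend meromorphically on $\lambda$. For $\lambda_1=-\lambda_2\in(-\tfrac12,\tfrac12)\setminus\{0\}$ both of these forms are positive definite, and the analytically continued Plancherel identity then yields the direct integral decomposition of $\pi_{\xi,\lambda}^{\operatorname{c.s.}}$ into parabolically induced complementary series of $\GL(2n-2,\RR)$.

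At the half-integer parameters $\lambda_1=-\lambda_2\in\tfrac12\ZZ_{\neq 0}$, the Knapp--Stein operator $\mathbf{T}_{\xi,\lambda}$ has non-trivial kernel on both $G$ and $\GL(2n-2,\RR)$. After a suitable renormalisation, the SBOs $A_{\xi,\lambda,\eta,\nu}$ descend to the irreducible quotients $\pi_{\xi,\lambda}^{\operatorname{small}}$ or $\pi_{\xi,\lambda}^{\operatorname{Speh}}$, and their images land in the quotient of the induced representation of $H$ by the kernel of the Knapp--Stein operator acting on the $\GL(2n-2,\RR)$ factor, which by construction is $\varpi_{\xi,\lambda}^{\operatorname{small}}$ or $\varpi_{\xi,\lambda}^{\operatorname{Speh}}$. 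The limiting Plancherel identity therefore descends to a unitary decomposition of the irreducible quotient, giving the remaining two cases simultaneously.

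The main obstacle is the detailed meromorphic analysis of the family $A_{\xi,\lambda,\eta,\nu}$: one must locate all its poles and zeros and verify that they match those of the Knapp--Stein operators on $G$ and on $\GL(2n-2,\RR)$ precisely enough that, after renormalisation, the Plancherel identity continues and descends to the correct irreducible quotients at every half-integer $\lambda_1=-\lambda_2$. This bookkeeping---presumably where most of the paper's effort on the two families of SBOs mentioned in the abstract is concentrated---is the technical heart of the argument; once it is in place, the four cases $\square\in\{\operatorname{unit.},\operatorname{c.s.},\operatorname{small},\operatorname{Speh}\}$ of Theorem~\ref{thm:BranchingLawsIntro} fall out of a single analytic continuation.
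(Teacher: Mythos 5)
Your overall plan -- decompose $G/P_G$ into $H$-orbits, apply Mackey theory on the open orbit, package the result as an explicit Plancherel identity with symmetry breaking operators, and analytically continue in $\lambda$ -- is precisely the paper's strategy. However, there are two concrete gaps.

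First, the stabiliser of the open $H$-orbit is \emph{not} a conjugate of $Q_H$, and the Mackey decomposition does not directly produce $\Ind_{Q_H}^H(\varpi_{\xi,\lambda}\otimes\chi_{\eta,\nu})$. The stabiliser $S$ of the base point $x_0=I_{2n}+E_{2n,n}$ is a proper subgroup of the parabolic $P_H$ with Levi $\GL(n-1,\RR)\times\GL(1,\RR)\times\GL(n-1,\RR)$ (it is $M_S A_S N_H$ with $P_H/S\simeq\RR^\times$); it is strictly smaller than $Q_H$ and not conjugate to it. Mackey theory plus the Mellin transform along the fibre $P_H/S\simeq\RR^\times$ thus gives a direct integral of the degenerate series $\tau_{(\xi_1,\eta,\xi_2),(\lambda_1,\nu,\lambda_2)}$ induced from $P_H$, not from $Q_H$. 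The passage from $\tau_{\eta,\nu}$ (and its quotients by $\ker\mathbf{S}_{\eta,\nu}$) to $\Ind_{Q_H}^H(\varpi\otimes\chi_{\eta,\nu})$ requires a separate step: a factorization of the Knapp--Stein operator $\mathbf{S}_{\eta,\nu}$ through $Q_H$-induction via standard intertwining operators (Lemma~\ref{lemma:FactorizationOfS} and Theorem~\ref{thm:SmallInvariantHermitianFormPositiveSemidefinite} in the paper).

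Second, a single SBO family is not enough for the continuation argument. For $\lambda\notin i\RR^2$ the unitary structure on the complementary series and the quotients is defined by the sesquilinear form $\langle f,f'\rangle_{\xi,\lambda}=(f\mid\overline{\mathbf{T}_{\xi,\lambda}f'})_G$, so the analytically continued Plancherel identity necessarily pairs $\mathbf{B}_{\xi,\lambda}^{\eta,\nu}f$ with $\mathbf{B}_{\cdot,\cdot}^{\cdot,\cdot}\circ\mathbf{T}_{\xi,\lambda}f'$; the composite $\mathbf{B}\circ\mathbf{T}$ is a different kernel, which the paper identifies (up to a Gamma factor) with a second integral family $\mathbf{A}_{\xi,\lambda}^{\eta,\nu}$ and also expresses as $\mathbf{S}\circ\mathbf{B}$. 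These two functional equations (Proposition~\ref{Prop: functional equations}) are the engine that makes the potential poles of the normalising Gamma factors cancel against zeros of the pairing $Q(\delta,\mu,\nu)=(\mathbf{B}f\mid\mathbf{A}f')_H$ (Lemma~\ref{lem:VanishingOfQ}), which is the crux of the continuation past $\Re\mu<\frac{n}{2}$. Your proposal acknowledges the meromorphic bookkeeping as the hard part but assumes a single-family structure; with only one family you would not have the operator identities needed to exhibit the required cancellation, nor the symmetry $\mathbf{B}\circ\mathbf{T}\propto\mathbf{A}$ that lets the invariant form descend to the quotients.
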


Our strategy of proof is to first decompose unitary degenerate series representations for $\lambda\in i\RR^2$ explicitly using Mackey theory and then extending this decomposition analytically in the parameter $\lambda\in\CC^2$ to decompose the other three families of representations. Let us briefly describe this procedure.

For $\lambda\in i\RR^2$ the Hilbert space on which the unitary representation $\pi_{\xi,\lambda}$ is realized is a space of $L^2$-sections over $G/P_G$. The subgroup $H$ acts with an open dense orbit on $G/P_G$ and a straightforward application of the Mackey machinery (in the same spirit as e.g. in \cite{TOP11}) together with the Plancherel formula for the classical Mellin transform yields the following decomposition (see Corollary~\ref{cor:UnitaryPlancherel}):
\[
\pi_{\xi,\lambda}|_H\simeq \bigoplus_{\eta\in\ZZ/2\ZZ}\int_{i\RR_+} \tau_{(\xi_1,\eta,\xi_2),(\lambda_1,\nu,\lambda_2)}\,d\nu,
\]
where $\tau_{(\xi_1,\eta,\xi_2),(\lambda_1,\nu,\lambda_2)}$ is a certain degenerate series of $H$ induced from a character of the standard parabolic subgroup $P_H$ of $H$ with Levi factor $\GL(n-1,\RR)\times \GL(1,\RR)\times \GL(n-1,\RR)$ and parameterized by $(\xi_1,\eta,\xi_2)\in (\ZZ/2\ZZ)^3$ and $(\lambda_1,\nu,\lambda_2)\in \CC^3$ (see Section~\ref{subsec:Hreps}). To use analytic extension in $\lambda\in\CC^2$, we need to make this decomposition explicit on the level of vectors, so we first derive a Plancherel type formula
\[
\|f\|_{L^2}^2 =\sum_{\eta\in\ZZ/2\ZZ}\int_{i\RR}\|B_{\xi,\lambda}^{\eta,\nu}f\|^2_{L^2}\,d\nu \qquad (f\in\pi_{\xi,\lambda})
\]
given in terms of a certain family of symmetry breaking operators $B_{\xi,\lambda}^{\eta,\nu}\in\Hom_H(\pi_{\xi,\lambda}|_H,\allowbreak \tau_{(\xi_1,\eta,\xi_2),(\lambda_1,\nu,\lambda_2)})$. The majority of the paper is concerned with extending both sides of the formula meromorphically in $\lambda\in\CC^2$ to also cover those parameters for which $\pi_{\xi,\lambda}$ contains complementary series, small irreducible quotients or Speh representations (see Section~\ref{sec:AnalyticContinuation}). Finally, the degenerate series representations $\tau_{(\xi_1,\eta,\xi_2),(\lambda_1,\nu,\lambda_2)}$ and their irreducible quotients are related to the representations $\Ind_{Q_H}^H(\varpi\otimes\chi_{\eta,\nu})$ via standard intertwining operators (see Theorem~\ref{thm:SmallInvariantHermitianFormPositiveSemidefinite}), which proves Theorem~\ref{thm:BranchingLawsIntro}.

\subsection*{Symmetry breaking operators}

The symmetry breaking operators $B_{\xi,\lambda}^{\eta,\nu}$ are given by the integral
\[
B_{\xi,\lambda}^{\eta,\nu}f(h)=\int_{\RR^\times} |t|^{\lambda_1-\nu-\frac{n}{2}}_{\xi_1+\eta} f(he^{tE_{2n,n}})\frac{dt}{|t|},
\]
whenever it converges (see Section~\ref{subsec:A-SBO}). Here, $E_{2n,n}$ denotes the $2n\times 2n$ matrix with a one in the entry $(2n,n)$ and zeros elsewhere. We find a meromorphic function $n_\mathbf{B}(\xi,\lambda,\eta,\nu)$ so that the normalized family $\mathbf{B}_{\xi,\lambda}^{\eta,\nu}=n_\mathbf{B}(\xi,\lambda,\eta,\nu)^{-1}B_{\xi,\lambda}^{\eta,\nu}$ extends holomorpically to $(\lambda,\nu)\in \CC^2\times \CC$ for all $(\xi,\eta)\in (\ZZ/2\ZZ)^2\times (\ZZ/2\ZZ)$ (see Proposition~\ref{prop:B-SBOholomorphic}).

Since the invariant Hermitian form that induces the invariant inner product on the complementary series and the unitarizable quotients of $\pi_{\xi,\lambda}$ involves the intertwining operator  $\mathbf{T}_{\xi,\lambda}$, we are led to consider the composition $\mathbf{B}_{w_G (\xi ,\lambda )}^{\eta ,\nu }\circ \mathbf{T} _{\xi ,\lambda }$ which turns out to be another type of symmetry breaking operator, namely (up to a constant) the operator $A_{\xi,\lambda}^{\eta,\nu}\in \Hom_H(\pi_{\xi,\lambda}|_H,\allowbreak \tau_{(\xi_2,\eta,\xi_1),(\lambda_2,\nu,\lambda_1)})$ given by the integral
\[
A_{\xi,\lambda}^{\eta,\nu}f(h)=\int_{\RR^{n\times n}} |\textstyle{\det(X)}|^{\lambda_1-\nu-\frac{n}{2}}_{\xi_1+\eta}|\det_{n-1}(X)|^{\nu-\lambda_2-\frac{n}{2}}_{\eta+\xi_2}f(h\begin{pmatrix}
    I_n & \\
    X & I_n
\end{pmatrix})\,dX,
\]
whenever it converges (see Section~\ref{subsec:A-SBO}). Here, $\det_{n-1}$ denotes the determinant of the upper left $(n-1)\times(n-1)$ minor. Again we find a meromorphic function $n_{\mathbf{A}}(\xi,\lambda,\eta,\nu)$ such that $\mathbf{A}_{\xi,\lambda}^{\eta,\nu}=n_{\mathbf{A}}(\xi,\lambda,\eta,\nu)^{-1}A_{\xi,\lambda}^{\eta,\nu}$ extends holomorphically to $(\lambda,\nu)\in \CC^2\times \CC$ for all $(\xi,\eta)\in (\ZZ/2\ZZ)^2\times (\ZZ/2\ZZ)$ (see Proposition~\ref{prop:A-SBOholomorphic}).

One of the key statements in our analysis of symmetry breaking operators are the following \emph{functional equations} which relate the composition of symmetry breaking operators with the Knapp--Stein intertwining operators $\mathbf{T}_{\xi,\lambda}$ for $G$ and a similar family of intertwining operators $\mathbf{S}_{\eta,\nu}:\tau_{\eta,\nu}\to \tau_{w_H(\eta,\nu)}$ for $H$ (see Section~\ref{sec:KSforH} for their definition):

\begin{thmalph}[see Proposition~\ref{Prop: functional equations}]\label{thm:FunctionalEquationsIntro}
    We have the following relations between $\mathbf{A}_{\xi,\lambda}^{\eta,\nu}$ and $\mathbf{B}_{\xi,\lambda}^{\eta,\nu}$:
    \begin{equation*}
\mathbf{S}_{(\xi_1,\eta,\xi_2) ,(\lambda_1,\nu,\lambda_2) } \circ \mathbf{B}_{\xi ,\lambda }^{\eta ,\nu }=\frac{1}{n_\mathbf{B}(\xi,\lambda,\eta,\nu)} \mathbf{A}_{\xi ,\lambda }^{\eta ,\nu }
\end{equation*}
and
\begin{equation*}
\mathbf{B}_{w_G (\xi ,\lambda )}^{\eta ,\nu }\circ \mathbf{T} _{\xi ,\lambda }=\frac{\sqrt{\pi}(-1)^{(\eta+\xi_2)(\xi_1+\xi_2)}}{\Gamma(\frac{\lambda_2-\lambda_1+n+[\xi_2+\xi_1]}{2})} \mathbf{A}_{\xi ,\lambda }^{\eta ,\nu }.
\end{equation*}
\end{thmalph}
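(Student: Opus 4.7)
The plan is to establish both identities by direct computation of the composite integrals in a region of absolute convergence and then extend them to all parameters using the holomorphy of the normalized operators $\mathbf{A}^{\eta,\nu}_{\xi,\lambda}$ and $\mathbf{B}^{\eta,\nu}_{\xi,\lambda}$ (Propositions~\ref{prop:A-SBOholomorphic} and~\ref{prop:B-SBOholomorphic}) together with the meromorphic rationality of $\mathbf{T}_{\xi,\lambda}$ and $\mathbf{S}_{\eta,\nu}$. Once each identity is verified on a nonempty open subset of $(\lambda,\nu)\in\CC^2\times\CC$, the identity principle will propagate it everywhere.

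For the first equation, I would represent the Knapp--Stein operator $\mathbf{S}_{(\xi_1,\eta,\xi_2),(\lambda_1,\nu,\lambda_2)}$ as an integral over the opposite unipotent radical $\overline{N}_H$ of $P_H$, which has dimension $n^2-1$ since the Levi of $P_H$ has block pattern $(n-1,1,n-1)$. Composing with the one-dimensional integral defining $\mathbf{B}^{\eta,\nu}_{\xi,\lambda}$ over the subgroup $\{e^{tE_{2n,n}}:t\in\RR^\times\}\subset \overline{N}_G$ yields an $n^2$-dimensional integral. A Bruhat-type change of variables identifying the product of these two integration domains with $\overline{N}_G\cong \RR^{n\times n}$ should transform the integrand into the density $|\det X|^{\lambda_1-\nu-n/2}_{\xi_1+\eta}|\det_{n-1} X|^{\nu-\lambda_2-n/2}_{\eta+\xi_2}$ defining $\mathbf{A}^{\eta,\nu}_{\xi,\lambda}$, with the two determinants arising as products of pivots in the resulting block LU-type decomposition. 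The proportionality constant $n_{\mathbf{B}}(\xi,\lambda,\eta,\nu)^{-1}$ is precisely the normalizing factor of $\mathbf{B}$.

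For the second equation, I would insert the standard formula $\mathbf{T}_{\xi,\lambda}f(g)=\int_{\overline{N}_G}f(gw_G\overline{n})\,d\overline{n}$ and compose it with the one-dimensional Mellin-type integral $\mathbf{B}^{\eta,\nu}_{w_G(\xi,\lambda)}$. Because $e^{tE_{2n,n}}$ itself lies in $\overline{N}_G$, the $(n^2+1)$-dimensional composite can be reorganized: after pushing $e^{tE_{2n,n}}$ through $w_G$ and absorbing the resulting unipotent part into a shift of the $\overline{n}$-variable, the $t$-integration separates off and reduces to a one-variable integral of the form $\int_{\RR}|t|^{s-1}_{\epsilon}(1+t^2)^{-r}\,dt$, which evaluates to the claimed factor $\sqrt{\pi}\,\Gamma\!\bigl(\tfrac{\lambda_2-\lambda_1+n+[\xi_2+\xi_1]}{2}\bigr)^{-1}$. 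The parity sign $(-1)^{(\eta+\xi_2)(\xi_1+\xi_2)}$ emerges from tracking the characters $|\cdot|_{\xi_1+\eta}$ and $|\cdot|_{\eta+\xi_2}$ through the coordinate change.

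The main obstacle is the second identity: the collapse of the $(n^2+1)$-dimensional integral to the $n^2$-dimensional $\mathbf{A}$-integral times a Gamma factor requires the right coordinates on $\overline{N}_G$ after the Weyl conjugation, and pinning down the exact sign demands scrupulous bookkeeping of the signed $|\cdot|_\eta$ characters through every substitution. As an alternative that would sidestep part of this computation, one may exploit the (generic) multiplicity-one statement $\dim\Hom_H(\pi_{\xi,\lambda}|_H,\tau_{(\xi_2,\eta,\xi_1),(\lambda_2,\nu,\lambda_1)})\le 1$ to conclude a priori that the two sides of each identity are proportional, and then fix the scalar by evaluating both sides on a single convenient vector, e.g.\ the spherical vector or a lowest $K$-type vector; this would reduce the analytic content of each identity to a one-variable Mellin transform.
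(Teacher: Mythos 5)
Your approach to the first identity is essentially the same as the paper's: the paper recognizes $\mathbf{A}$ as an iterated integral over $H/P_H\times\RR^\times$ using an integral formula for $G/P_G$ (Corollary~\ref{cor:integralformula}), which is exactly the ``Bruhat-type change of variables'' you describe, together with the identity $\Phi_n(h\overline{n}_{2n,n}(t))=t\Psi_{n-1}(h)$.

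For the second identity, however, your outline has two concrete problems. First, conjugating $\overline{n}_{2n,n}(t)=e^{tE_{2n,n}}$ by the Weyl representative $\tilde{w}_G$ produces an element of $N_G$, not of $\overline{N}_G$; since $f$ is right-$P_G$-equivariant, absorbing that conjugate into $P_G$ would annihilate the $t$-dependence rather than produce ``a shift of the $\overline{n}$-variable.'' The paper avoids this by working with the invariant integral over $G/P_G$ rather than $\overline{N}_G$-coordinates: it substitutes $g\mapsto\overline{n}_{2n,n}(-t)g$ using $G$-invariance of $d(gP_G)$, which shifts the kernel $|\Phi_n|$ instead of the argument of $f$, yielding $\Phi_n(\overline{n}_{2n,n}(-t)g)=\Phi_n(g)+(-1)^n t\Psi_n(g)$. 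Second, the resulting $t$-integral is not of the form $\int_\RR|t|^{s-1}_\varepsilon(1+t^2)^{-r}\,dt$; after rescaling it is a beta-type integral with a \emph{linear} term inside, $\int_\RR|t|^{a-1}_\varepsilon|1+ct|^{b}_{\varepsilon'}\,dt$, which is what \cite[Proposition A.1]{DF24} evaluates and what produces the factor $\sqrt{\pi}(-1)^{(\eta+\xi_2)(\xi_1+\xi_2)}\Gamma(\tfrac{\lambda_2-\lambda_1+n+[\xi_1+\xi_2]}{2})^{-1}$ together with the powers $|\Phi_n(g)|^{\lambda_1-\nu-n/2}_{\xi_1+\eta}|\Psi_n(g)|^{\nu-\lambda_2-n/2}_{\eta+\xi_2}$ that assemble into the kernel of $\mathbf{A}$. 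Your alternative multiplicity-one strategy would sidestep the coordinate computations, but it is not what the paper does, and it would require separately establishing generic multiplicity one for $\Hom_H(\pi_{\xi,\lambda}|_H,\tau_{(\xi_2,\eta,\xi_1),(\lambda_2,\nu,\lambda_1)})$ and then a nontrivial evaluation on a test vector; those are not small additions.
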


Further results for $\mathbf{A}_{\xi,\lambda}^{\eta,\nu}$ and $\mathbf{B}_{\xi,\lambda}^{\eta,\nu}$ include mapping properties, vanishing, estimates in the parameters and residue formulas (see Proposition~\ref{prop:Bvanishing}, Remark~\ref{rem:DiffOpResidues}, Proposition~\ref{prop:B-surjective}, Lemma~\ref{lem:VanishingOfQ} and Theorem~\ref{thm:DecayOfBinNu}).

\subsection*{Relation to other work}

As pointed out earlier, Theorem~\ref{thm:BranchingLawsIntro} for complementary series and Speh representations can also be deduced from the work of Sahi~\cite{Sah90} and Stein--Sahi~\cite{SS90} (see Remark~\ref{rem:Adduced} for details). Our approach is different and uses in a crucial way the families $\mathbf{A}_{\xi,\lambda}^{\eta,\nu}$ and $\mathbf{B}_{\xi,\lambda}^{\eta,\nu}$ of symmetry breaking operators. For real infinitesimal characters, the existence of such operators is a special case of the recent more general results by Kobayashi--Speh~\cite{KS25}. Our construction and analysis of these families can be viewed as a qualitative version of their quantitative results (see Remark~\ref{rem:KobayashiSpeh} for details).

The family of operators $\mathbf{A}_{\xi,\lambda}^{\eta,\nu}$ is directly related to a family of symmetry breaking operators between principal series representations studied in \cite{DF24}. It would be interesting to investigate more systematically to which extent the operators in \cite{DF24} give rise to operators between other degenerate series or generalized principal series representations.

We remark that Theorem~\ref{thm:BranchingLawsIntro} only treats half of all Speh representations. The other half can be found as quotients inside the degenerate series $\pi_{\xi,\lambda}$ for $\xi_1\neq\xi_2$ and $\lambda_1=-\lambda_2\in\{-1,-2,\ldots\}$. We exclude this case since the Hermitian form that the Knapp--Stein operators $\mathbf{T}_{\xi,\lambda}$ with $\xi_1\neq\xi_2$ induce is not $\GL(2n,\RR)$- but only $\SL(2n,\RR)$-invariant. Restricted to $\SL(2n,\RR)$, the Speh representations decompose into the direct sum of two irreducible representations, and the Hermitian form has opposite signs on these two components (see \cite{Sah90}). Taking both positive and negative signatures into account, our method can also produce the corresponding branching laws for the remaining Speh representations, but this would require a special treatment of this particular case which is in contrast to the uniform methods presented in this paper. We remark that the alternative proof using the results of Sahi--Stein~\cite{SS90} (see Remark~\ref{rem:Adduced}) also covers this case.

It is also worth mentioning that for rank one groups, branching laws for complementary series representations have been studied before in several case, see \cite{KS15, KS18,MOZ16,MO15,SV11,SV12,SZ16,Wei24}.

\subsection*{Notation}

Throughout the paper we stick to the convention that $\NN=\{0,1,\dots\}$. We use $\RR^{m\times n}$ to denote the space of $m\times n$ matrices and write $I_k\in\RR^{k\times k}$ to denote the identity matrix and $E_{i,j}$ to denote the matrix which contains zero everywhere but at the $(i,j)$-th entry where there is a one. We further use $e_i$ to denote the $i$-th standard basis vector of $\CC^n$. The matrix transpose or other transposes with respect to non-degenerate bilinear forms are denoted by the superscript ${}^\intercal$. For $\varepsilon\in \ZZ/2\ZZ$ and $\mu \in \CC$ we write $|x|^\mu_{\varepsilon}=\sgn(x)^\varepsilon|x|^\mu$ $(x\in \RR^\times)$. 

\subsection*{Acknowledgments} The first author was supported by the Carlsberg Foundation (grant no. CF24-045). The second author was partially supported by the Aarhus University Research Foundation (grant no. AUFF-E-2022-9-34). Both authors acknowledge support of the Institut Henri Poincaré (UAR 839 CNRS-Sorbonne Université), and LabEx CARMIN (ANR-10-LABX-59-01) during the thematic trimester ``Representation Theory and Noncommutative Geometry'' in early 2025.

\section{Principal and degenerate series representations} \label{sec:Representations}

In this section, we define principal series and degenerate series representations of $G=\GL(2n,\RR)$ and $H=\GL(2n-1,\RR)$.

\subsection{Degenerate series representations of \texorpdfstring{$\GL(2n,\RR)$}{GL(2n,R)}}\label{sec:DegSerG}

In this subsection we consider the group $G=\GL(2n,\RR)$ and its maximal parabolic subgroup 
\[
P_G=\begin{pmatrix}
    \GL(n,\RR) & \R^{n\times n}\\
     & \GL(n,\RR)
\end{pmatrix}
\]
which has a Langlands decompositon $P_G=M_GA_GN_G$ with 
\[
M_G=\begin{pmatrix}
    \SL^\pm(n,\RR) & \\
     & \SL^\pm(n,\RR)
\end{pmatrix},\quad A_G=\begin{pmatrix}
    \RR_{+} I_n & \\
    & \RR_{+} I_n
\end{pmatrix},\quad N_G=\begin{pmatrix}
    I_n & \RR^{n\times n} \\
     & I_n
\end{pmatrix}.
\]
Write $\fraka_G$ and $\frakn_G$ for the Lie algebras of $A_G$ and $N_G$.

We consider the irreducible one-dimensional representations of $M_G$ which are given by the characters 
\[
M_G\to \{-1,1\},\quad \diag(g_1,g_2)\mapsto \det(g_1)^{\xi_1}\det(g_2)^{\xi_2}
\]
for $\xi=(\xi_1,\xi_2)\in (\ZZ/2\ZZ)^2$. We identify $\xi$ with the corresponding character of $M_G$. Furthermore, we make the identification $(\fraka_G)_\CC^* \simeq \CC^2$ by the mapping 
$$\lambda\mapsto \big(\lambda(\diag(I_n,0)),\lambda(\diag(0,I_n))\big).$$ Then $\rho_G=\frac{1}{2}\tr \ad |_{\frakn_G}\in(\fraka_G)_\CC^*$ corresponds to $(\frac{n}{2},-\frac{n}{2})\in \CC^2$. For $\lambda \in \CC^2$ we write $e^\lambda$ for the character of $A_G$ given by $e^\lambda(e^H)=e^{\lambda(H)}$ $(H\in \fraka_G)$.

For $\xi\in (\ZZ/2\ZZ)^2$ and $\lambda\in \CC^2$ we extend the character $\xi\otimes e^\lambda$ of $M_GA_G$ trivially to $P_G=M_GA_GN_G$ and write $\xi\otimes e^\lambda\otimes 1$ for it. Using smooth parabolic induction from $P_G$ to $G$ we obtain the degenerate series representation $\pi_{\xi,\lambda}=\Ind_{P_G}^G(\xi\otimes e^\lambda\otimes 1)$ as the left-regular representation of $G$ on 
\[
\{f\in C^\infty(G)\,|\,f(gman)=\xi(m)^{-1}a^{-\lambda-\rho_G}f(g)\,\forall man\in M_GA_GN_G\,\forall g \in G\}.
\]

On $\pi_{\xi,\lambda}\times \pi_{\xi,-\lambda}$ we have a $G$-invariant bilinear pairing given by
\begin{equation}
( f\,|\, f')_G=\int_{G/P_G} f(g)f'(g)\,d(gP_G)\qquad (f\in \pi_{\xi,\lambda}, f'\in \pi_{\xi,-\lambda}),\label{eq:InvPairingG}
\end{equation}
where $d(gP_G)$ denotes the unique (up to scalar multiples) $G$-invariant integral on $C(G\times_{P_G}\CC_{2\rho_G})$, the continuous sections of the density bundle over $G/P_G$ which is the homogeneous vector bundle induced from the character $1\otimes e^{2\rho_G}\otimes 1$ of $P_G$ (see \cite[Theorem 1.1]{CF23}).

\subsection{Degenerate series representations of \texorpdfstring{$\GL(2n-1,\RR)$}{GL(2n-1,R)}}\label{subsec:Hreps}

We now consider the subgroup $H=\GL(2n-1,\RR)$ of $G$ embedded in the upper left corner, and its parabolic subgroup
\[
P_{H}=\begin{pmatrix}
    \GL(n-1,\RR) & \R^{(n-1)\times 1} & \RR^{(n-1)\times(n-1)}\\
     & \GL(1,\RR) & \RR^{1\times (n-1)}\\
     & & \GL(n-1,\RR)
\end{pmatrix}.
\]
This parabolic subgroup has a Langlands decomposition $P_{H}=M_{H}A_{H}N_{H}$ given by
\begin{align*}
    M_{H}&=\begin{pmatrix}
    \SL^\pm(n-1,\RR) & & \\
    & \{\pm1\} & \\
    & & \SL^\pm(n-1,\RR)
    \end{pmatrix},\\
    A_{H}&=\begin{pmatrix}
    \RR_{+} I_{n-1} & & \\
     & \RR_{+} & \\
     & & \RR_{+} I_{n-1}
    \end{pmatrix},\\
    N_{H}&=\begin{pmatrix}
    I_{n-1} & \R^{(n-1)\times 1} & \RR^{(n-1)\times (n-1)}\\
     & 1 & \RR^{1\times (n-1)}\\
     & & I_{n-1}
    \end{pmatrix}.
\end{align*}
Write  $\fraka_H$ and $\frakn_H$ for the Lie algebras of $A_H$ and $N_H$. 

We consider the irreducible one-dimensional representations of $M_H$ that are given by the characters 
\[
M_H\to \{-1,1\},\quad \diag(g_1,\varepsilon,g_2)\mapsto \det(g_1)^{\eta_1}\varepsilon^{\eta_2}\det(g_2)^{\eta_3},
\]
for $\eta=(\eta_1,\eta_2,\eta_3)\in (\ZZ/2\ZZ)^3$. We identify $\eta$ with the corresponding character of $M_H$. Furthermore, we make the identification $(\fraka_H)^*_\CC\simeq \CC^3$ by 
\[
\nu \mapsto \big(\nu(\diag(I_{n-1},0,0)),\nu(\diag(0,1,0)),\nu(\diag(0,0,I_{n-1}))\big).
\]
Then $\rho_H=\frac{1}{2}\tr \ad |_{\frakn_H}$ corresponds to $(\frac{n}{2},0,-\frac{n}{2})\in \CC^3$. For $\nu\in \CC^3$ we write $e^\nu$ for the character of $A_H$ given by $e^\nu(e^H)=e^{\nu(H)}$ $(H\in \fraka_H)$.

For $\eta\in (\ZZ/2\ZZ)^3$ and $\nu\in \CC^3$ we extend the character $\eta\otimes e^\nu$ trivially to $P_H=M_HA_HN_H$ and write $\eta\otimes e^\nu\otimes 1$ for it. Using smooth (normalized) parabolic induction from $P_H$ to $H$ we obtain the degenerate series representation $\tau_{\eta,\nu}=\Ind_{P_H}^H(\eta\otimes e^\nu\otimes 1)$ as the left-regular representation of $H$ on 
\[
\{f\in C^\infty(H)\,|\, f(hman)=\eta(m)^{-1}a^{-\nu-\rho_H}f(h)\,\forall man\in M_HA_HN_H\,\forall h\in H\}.
\]

As in the previous section, we have $\tau_{\eta,\nu}\times \tau_{\eta,-\nu}$ an $H$-invariant bilinear pairing given by
\begin{equation}
(f\,|\,f')_H=\int_{H/P_H}f(h)f'(h)\,d(hP_H)\qquad(f\in \tau_{\eta,\nu}, f'\in \tau_{\eta,-\nu}),\label{eq:InvPairingH}
\end{equation}
where $d(hP_G)$ denotes the unique (up to scalar multiples) $H$-invarant integral on $C(H\times_{P_H}\CC_{2\rho_H})$.

\subsection{Principal series representations of \texorpdfstring{$\GL(2n,\RR)$}{GL(2n,R)} and \texorpdfstring{$\GL(2n-1,\RR)$}{GL(2n-1,R)}}

\noindent Consider the group $G_k=\GL(k,\R)$ and denote its corresponding minimal parabolic consisting of upper triangular matrices by $P_k$. This parabolic has a Langlands decomposition $P_k=M_kA_kN_k$ where $M_k$ is the group of diagonal matrices with entries from $\{-1,1\}$, $A_k$ is the group of diagonal matrices with strictly positive entries and $N_k$ is the group of unipotent  upper triangular matrices. Write $\fraka_k$ and $\frakn_k$ for the Lie algebras of $A_k$ and $N_k$.

The irreducible representations of $M_k$ are one-dimensional and given by the characters 
\[
M_k\to \{-1,1\},\quad \diag(\varepsilon_1,\dots,\varepsilon_k)\mapsto \varepsilon_1^{\delta_1}\cdots \varepsilon_k^{\delta_k},
\]
where $\delta=(\delta_1,\dots,\delta_k)\in (\ZZ/2\ZZ)^k$. We identify $\delta$ with the corresponding character of $M_k$. Furthermore, we make the identification $(\fraka_k)_\CC^*\simeq \CC^k$ by mapping $\mu\mapsto (\mu(E_{1,1}),\dots, \mu(E_{k,k}))$. Then the half sum of the positive roots $\rho_k=\frac{1}{2}\tr \ad |_\frakn$ corresponds to $\frac{1}{2}(k-1,k-3,\dots,3-k,1-k)$. For $\mu\in \CC^k$ we write $e^\mu$ for the character of $A_k$ given by $e^\mu(e^H)=e^{\mu(H)}$ $(H\in \fraka_k)$.

For $\delta\in (\ZZ/2\ZZ)^k$ and $\mu \in \CC^k$ we extend the character $\delta\otimes e^\mu$ of $M_kA_k$ trivially to $P_k=M_kA_kN_k$ and write $\delta\otimes e^\mu\otimes 1$ for it. Using smooth (normalized) parabolic induction from $P_k$ to $G_k$ we obtain the principal series representation $\Ind_{P_k}^{G_k}(\delta\otimes e^\mu\otimes 1)$ as the left-regular representation of $G$ on 
\[
\{f\in C^\infty(G_k)|f(gman)=\delta(m)^{-1} a^{-\mu-\rho} f(g)\, \forall man\in M_kA_kN_k\,\forall g \in G_k\}.
\]

When $k = 2n$ (i.e. $G_k =G= \GL(2n, \RR)$), we denote the principal series representation induced from the minimal parabolic subgroup $P_k$ as
\[
\pi_{\xi',\lambda'}^{\ps} = \Ind_{P_{2n}}^G(\xi' \otimes e^{\lambda'} \otimes 1), \quad (\xi' \in (\ZZ/2\ZZ)^{2n}, \; \lambda' \in \CC^{2n}).
\]
Similarly, for $k = 2n-1$ (i.e. \( G_k=H = \GL(2n-1, \RR) \)), the principal series representation is denoted by
\[
\tau_{\eta',\nu'}^{\ps} = \Ind_{P_{2n-1}}^H(\eta' \otimes e^{\nu'} \otimes 1), \quad (\eta' \in (\ZZ/2\ZZ)^{2n-1}, \; \nu' \in \CC^{2n-1}).
\]
\begin{lemma}\label{lem:degseriesInsidePrincipalSeries}
Let $(\xi',\lambda')\in (\ZZ/2\ZZ)^{2n}\times \CC^{2n}$ and $(\xi,\lambda)\in  (\ZZ/2\ZZ)^2\times \CC^2$. If the parameters are related by
\[
\xi_j'=\xi_1\qquad \xi_{j+n}'=\xi_2,\qquad \lambda_j'=\lambda_1+\frac{n-2j+1}{2},\qquad \lambda_{n+j}'=\lambda_2+\frac{n-2j+1}{2}
\]
for $j=1,\dots,n$, then
\[
\Pi_{\xi',\lambda'}f(g)=\int_{G_n/P_n\times G_n/P_n}|\det(x_1)|_{\xi_1}^{\lambda_1+\frac{n}{2}}|\det(x_2)|^{\lambda_2-\frac{n}{2}}_{\xi_2}f\bigg(g\begin{pmatrix}
x_1 & \\
 & x_2
\end{pmatrix}\bigg)\,dx_1\,dx_2,
\]
where $dx_1$ and $dx_2$ denote the unique $G_n$-invariant integral on $C^\infty(G_n\times_{P_n}\CC_{2\rho_n})$, defines a surjective intertwining operator
$\Pi_{\xi',\lambda'}:\pi_{\xi',\lambda'}^{\ps}\to \pi_{\xi,\lambda}$.
Similarly, if the parameters $(\eta',\nu')\in(\ZZ/2\ZZ)^{2n-1}\times\CC^{2n-1}$ and $(\eta,\nu)\in(\ZZ/2\ZZ)^3\times\CC^3$ are related by
\[
\eta_j'=\eta_1,\qquad \eta_n'=\eta_2,\qquad \eta_{j+n}'=\eta_3
\]
and
\[
\nu_j'=\nu_1+j-\frac{n}{2},\qquad \nu_n'=\nu_2\qquad \nu_{n+j}'=\nu_3+j-\frac{n}{2}
\]
for $j=1,2,\dots,n-1$, we get an injective intertwining map $\iota_{\eta,\nu}:\tau_{\eta,\nu}\to\tau_{\eta',\nu'}^{\ps}$ given by $f\mapsto f$.
\end{lemma}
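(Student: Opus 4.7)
The statement has two independent halves; I treat them in turn. \textbf{The injection $\iota_{\eta,\nu}$.} Since $P_{2n-1}\subseteq P_H$ and the map is literally $f\mapsto f$, the task is to check that right $P_H$-equivariance of a smooth function forces right $P_{2n-1}$-equivariance with the specified parameters. The plan is to decompose an arbitrary $man\in P_{2n-1}$ in the Langlands form $P_H=M_HA_HN_H$ as $m_Ha_Hn_H$ and verify $\eta'(m)\,a^{\nu'+\rho_{2n-1}}=\eta(m_H)\,a_H^{\nu+\rho_H}$. The $N$-part is trivial since $N_{2n-1}\subseteq N_H$; the $M$-part is a regrouping of $\pm 1$ entries matching $\eta'_j=\eta_1$, $\eta'_n=\eta_2$, $\eta'_{n+j}=\eta_3$; and for the $A$-part the decisive cancellation is
\[
\nu'_j+(\rho_{2n-1})_j=\big(\nu_1+j-\tfrac{n}{2}\big)+(n-j)=\nu_1+\tfrac{n}{2}\qquad(j=1,\ldots,n-1),
\]
together with the analogues $\nu'_n+(\rho_{2n-1})_n=\nu_2$ and $\nu'_{n+j}+(\rho_{2n-1})_{n+j}=\nu_3-\tfrac n2$. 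Since the first of these is independent of $j$, the exponents of $a_1,\ldots,a_{n-1}$ in $a^{\nu'+\rho_{2n-1}}$ are all equal and collapse into the single $A_H$-character of $a_H=\diag(r_1I_{n-1},r_2,r_3I_{n-1})$ with $r_1=(a_1\cdots a_{n-1})^{1/(n-1)}$; this is exactly $a_H^{\nu+\rho_H}$, and similarly for the third block. Injectivity is automatic since $\iota_{\eta,\nu}$ is the identity on functions.

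\textbf{The surjection $\Pi_{\xi',\lambda'}$.} The plan is to recognize $\Pi_{\xi',\lambda'}$ as the intertwiner produced by induction in stages across $P_{2n}\subseteq P_G$. Indeed $P_{2n}\cap M_G=P_n\times P_n$ is the minimal parabolic of $M_G=\SL^\pm(n,\RR)\times\SL^\pm(n,\RR)$, while $A_G\subseteq A_{2n}$ and $N_G\subseteq N_{2n}$, so normalized induction in stages gives an isomorphism
\[
\pi^{\ps}_{\xi',\lambda'}\cong \Ind_{P_G}^G\!\big(\Ind_{P_{2n}}^{P_G}(\xi'\otimes e^{\lambda'}\otimes 1)\big),
\]
where the inner induced $P_G$-representation is (up to the twist from $A_G\hookrightarrow A_{2n}$) the outer tensor product of two $\SL^\pm(n,\RR)$-principal series. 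For the parameter choice of the lemma, the character $\xi\otimes|\det|^{\lambda_1+n/2}_{\xi_1}\otimes|\det|^{\lambda_2-n/2}_{\xi_2}$ of $M_GA_G$ appears as the one-dimensional quotient of this inner representation, realized by fibre integration over $M_G/(M_G\cap P_{2n})=G_n/P_n\times G_n/P_n$ with precisely the weight written in the lemma. The shifts $\pm n/2$ come from the difference between $\rho_{2n}$ restricted to $\fraka_G$ and $\rho_G$, plus the $\rho_n$-pieces inside each $\SL^\pm(n,\RR)$-factor.

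With this identification, three checks remain, to be carried out in order: (i) the integrand, viewed as a function of $x_i$ alone, transforms under right multiplication by $p_i\in P_n$ with the correct density weight $p_i^{2\rho_n}$, so the integral over $G_n/P_n\times G_n/P_n$ is well-defined --- this uses only $(\rho_n)_j=\tfrac{n-2j+1}{2}$, the $P_{2n}$-equivariance of $f$, and the multiplicativity of $|{\cdot}|^{\cdot}_\varepsilon$, and it is this step that forces the factors to take exactly the form $|\det x_j|^{\lambda_j\pm n/2}_{\xi_j}$; (ii) right $P_G$-equivariance of $\Pi_{\xi',\lambda'}f$, where $N_G\subseteq N_{2n}$ absorbs the unipotent part, the substitution $x_i\mapsto g_i^{-1}x_i$ handles $m=\diag(g_1,g_2)\in M_G$, and for $a\in A_G$ one sums $\lambda'_j+(\rho_{2n})_j$ over $j=1,\ldots,n$ in each block to recover $a^{-\lambda-\rho_G}$; (iii) $G$-equivariance is immediate from left-invariance of the integrand. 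Surjectivity finally follows from the compactness of the fibre $M_G/(M_G\cap P_{2n})$: in the Iwasawa picture $\Pi_{\xi',\lambda'}$ restricts to the averaging map $\Ind_{\upO(2n)\cap P_{2n}}^{\upO(2n)}(\xi')\to\Ind_{\upO(2n)\cap P_G}^{\upO(2n)}(\xi|_{\upO(2n)\cap P_G})$ over this compact fibre, which is manifestly surjective onto smooth sections by a partition-of-unity argument. The only real obstacle in the plan is the bookkeeping of $\rho$-shifts as one passes between $P_{2n}$, $P_G$, and the two copies of $P_n$ inside $M_G$; once step (i) pins down the integrand, the remaining verifications are routine.
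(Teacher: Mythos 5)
Your proof is correct and at heart follows the same route as the paper's (very terse) proof: verify the equivariance properties forced by the chosen parameter relations, using compactness of $G_n/P_n$ for convergence. Two small points worth flagging. First, in the $\iota_{\eta,\nu}$ half, the cancellation $\nu'_j+(\rho_{2n-1})_j=\nu_1+\tfrac n2$ is computed correctly, but the final identification with $a_H^{\nu+\rho_H}$ (after pulling out $r_1=(a_1\cdots a_{n-1})^{1/(n-1)}$) silently uses the convention that $e^\nu(\diag(r_1I_{n-1},r_2,r_3I_{n-1}))=r_1^{(n-1)\nu_1}r_2^{\nu_2}r_3^{(n-1)\nu_3}$; this is the convention the paper actually uses (it is the one making $\rho_H\leftrightarrow(\tfrac n2,0,-\tfrac n2)$ come out right), even though the identification as literally stated in Section~\ref{subsec:Hreps} would give a factor $n-1$ off, so it is worth being explicit about which normalization you're relying on. Second, the surjectivity of $\Pi_{\xi',\lambda'}$ is not addressed at all in the paper's proof, so your argument via fiber integration over the compact flag variety of $M_G$ is a genuine addition; here it is simpler than a partition of unity: since $\xi'|_{M_{2n}\cap K_{2n}}=\xi|_{M_{2n}\cap K_{2n}}$, any section $\psi$ of the coarser bundle is already a section of the finer one, and fiber averaging sends it to a positive multiple of itself. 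Framing the operator as the intertwiner arising from normalized induction in stages $P_{2n}\subseteq P_G$ is also a helpful conceptual layer that the paper leaves implicit.
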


\begin{proof}
This follows immediately by checking the equivariance properties for the chosen parameters. The integral converges as the integrand is continuous and $G_n/P_n$ is compact.
\end{proof}

\section{Knapp--Stein intertwining operators and unitarizable quotients} \label{sec:KnappSteinAndUnitary}

In this section we define and normalize Knapp--Stein intertwining operators between the degenerate series representations of $G$ and $H$ and use them to identify unitarizable quotients. For this we fix the maximal compact subgroup $K_k=O(k)$ of $G_k$. 

\subsection{Knapp--Stein intertwining operators for \texorpdfstring{$G$}{G}}\label{sec:KSforG}

The Weyl group $N_{K_{2n}}(A_G)/Z_{K_{2n}}(A_G)$ is a group $\{e,w_G\}$ with two elements. Using the identification of $(\ZZ/2\ZZ)^2$ with the characters of $M_G$ and $(\fraka_G)^*_\CC\simeq \CC^2$ it follows that $w_G(\xi_1,\xi_2)=(\xi_2,\xi_1)$ and $w_G(\lambda_1,\lambda_2)=(\lambda_2,\lambda_1)$. We can pick the following representative of $w_G=[\Tilde{w}_G]$:
\[
\Tilde{w}_G=\begin{pmatrix}
    0 & -I_n\\
    I_n & 0
\end{pmatrix}
\]
and let $\overline{N}_G=N_G^\intercal=\tilde{w}_G N_G\tilde{w}_G^{-1}$.

For $f\in \pi_{\xi,\lambda}$ the integral
\[
T_{\xi,\lambda}f(g)=\int_{\overline{N}_G}f(g\Tilde{w}_G\overline{n})\,d\overline{n}\qquad (g\in G),
\]
converges absolutely in some range of parameters $\lambda\in \CC^2$ and defines an intertwining operator 
\[
T_{\xi,\lambda}:\pi_{\xi,\lambda}\to \pi_{w_G(\xi,\lambda)}
\]
known as the Knapp--Stein intertwining operator (see e.g. \cite{K86}). It can be shown that the family $T_{\xi,\lambda}$ has a meromorphic extension to all $\lambda\in \CC^2$ (if viewed as an operator in the compact picture where the representation space is independent of $\lambda$). Note that choosing a different representative $\Tilde{w}_G$ would only change the operator by a sign. 

On the open dense subset of $\overline{N}_G$ consisting of 
\[
\overline{n} = 
\begin{pmatrix}
    I_n & \\
    X & I_n
\end{pmatrix}
\qquad \text{with $X \in \mathbb{R}^{n \times n}$ invertible,}
\]
we can decompose
\[
w_G \overline{n} =
\begin{pmatrix}
    -X & -I_n \\
    I_n & 
\end{pmatrix} =
\begin{pmatrix}
    I_n & \\
    -X^{-1} & I_n
\end{pmatrix}
\begin{pmatrix}
    -X & \\
    & -X^{-1}
\end{pmatrix}
\begin{pmatrix}
    I_n & X^{-1} \\
    & I_n
\end{pmatrix},
\]
allowing us to write the Knapp--Stein operator as 
\begin{align*}
T_{\xi ,\lambda } f(g)&=\int_{\RR^{n\times n}}|\det(X)|_{\xi _1+\xi _2}^{\lambda_1 -\lambda_2 -n}f(g\begin{pmatrix}
    I_n & \\
    X & I_n
\end{pmatrix})\,dX,
\end{align*}
where we have used the equivariance of functions in $f\in \pi_{\xi,\lambda}$ and made the change of variables $X\to -X^{-1}$. By \cite[Lemma 3.16]{BSZ06} this integral converges absolutely for $\Re(\lambda_1-\lambda_2)>n-1$. Consider the following normalization
\begin{equation}
\mathbf{T}_{\xi,\lambda} =\frac{T_{\xi,\lambda}}{n_\mathbf{T}(\xi,\lambda)},\qquad \text{where}\qquad n_\mathbf{T}(\xi,\lambda)=\prod_{i=1}^n\Gamma\big(\tfrac{\lambda_1-\lambda_2-n+i+[\xi_1+\xi_2]}{2}\big).\label{eq:HolomorphicT}
\end{equation}

\begin{proposition}[{see \cite[Theorem 5.12]{BSZ06}}] \label{prop:storTonyholomorphic}
For $\xi \in (\ZZ/2\ZZ)^2$ with $\xi_1=\xi_2$ the family $\mathbf{T}_{\xi,\lambda}$ depends holomorphically on $\lambda\in\C^2$.
\end{proposition}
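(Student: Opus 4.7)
The plan is to establish holomorphy by reducing the problem to the classical meromorphic continuation of a scalar distribution on $\RR^{n\times n}$ and matching the poles against the gamma factors in $n_\mathbf{T}(\xi,\lambda)$.

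First, I would pass to the compact picture of $\pi_{\xi,\lambda}$, in which the representation is realized on a fixed space of sections over $K_{2n}/(P_G\cap K_{2n})$ that is independent of $\lambda$. In this model $T_{\xi,\lambda}$ becomes a family of operators on one fixed function space, and it suffices to show that for any fixed smooth $f$ in this space and any fixed $g\in G$, the scalar $(n_\mathbf{T}(\xi,\lambda))^{-1}T_{\xi,\lambda}f(g)$ depends holomorphically on $\lambda\in\CC^2$.

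Second, I would analyze the integral formula
\[
T_{\xi,\lambda}f(g)=\int_{\RR^{n\times n}}|\det X|_{\xi_1+\xi_2}^{\lambda_1-\lambda_2-n}\,f\!\left(g\begin{pmatrix}I_n & \\ X & I_n\end{pmatrix}\right)dX.
\]
Setting $s=\lambda_1-\lambda_2-n$ and using the hypothesis $\xi_1=\xi_2$, so that $\xi_1+\xi_2=0$ in $\ZZ/2\ZZ$ and hence $[\xi_1+\xi_2]=0$, the entire $\lambda$-dependence (in the compact picture, where $f$ no longer varies with $\lambda$) reduces to the distribution-valued map $s\mapsto |\det X|^s$ on $\RR^{n\times n}$. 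This is a classical relative invariant of the prehomogeneous vector space $(\GL(n,\RR)\times\GL(n,\RR),\RR^{n\times n})$, whose Bernstein--Sato polynomial is $\prod_{i=1}^n(s+i)$. Consequently, by Bernstein's meromorphic continuation (or equivalently the Sato--Shintani zeta-function calculation), the normalized distribution
\[
\frac{|\det X|^s}{\prod_{i=1}^n \Gamma\!\left(\tfrac{s+i}{2}\right)}
\]
extends to an entire tempered-distribution-valued function of $s\in\CC$, the gamma factors precisely accounting for the poles arising from the contribution of each stratum of $\{\det X=0\}$.

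Third, substituting back $s=\lambda_1-\lambda_2-n$ and comparing with
\[
n_\mathbf{T}(\xi,\lambda)=\prod_{i=1}^n\Gamma\!\left(\tfrac{\lambda_1-\lambda_2-n+i}{2}\right),
\]
we see that $\mathbf{T}_{\xi,\lambda}f(g)$ arises from pairing $f$ against a holomorphic family of distributions, proving the claim. The main obstacle is the precise identification of the pole structure of $|\det X|^s$; this is where the hypothesis $\xi_1=\xi_2$ is essential, since for $\xi_1\neq\xi_2$ the sign character $\sgn(\det X)$ enters and the gamma factors shift to $\Gamma(\tfrac{s+i+1}{2})$, producing a different normalizing function and, in general, genuine poles that cannot be removed by the same $n_\mathbf{T}$. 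Everything else is bookkeeping: controlling $f$ on the support of the distribution (automatic in the compact picture by smoothness) and verifying that the substitution of $s$ is consistent with the $\lambda_1+\lambda_2$ direction, along which $T_{\xi,\lambda}$ is manifestly constant.
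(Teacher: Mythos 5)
The paper offers no proof here — Proposition~\ref{prop:storTonyholomorphic} is stated with a citation to \cite[Theorem~5.12]{BSZ06} — so your proposal is a genuinely different route: an attempt at a self-contained proof rather than a reference. Your reduction is the natural one (pass to the compact picture and study the distribution $|\det X|^s$ on $\RR^{n\times n}$), and the Bernstein--Sato polynomial of $\det$ is indeed $\prod_{i=1}^n(s+i)$ via Cayley's identity $\det(\partial)\det(X)^{s+1}=(s+1)\cdots(s+n)\det(X)^s$.

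However, there is a real gap at the crucial step. Iterating the Bernstein--Sato identity moves $s$ by integer steps and establishes meromorphic continuation with candidate poles at all negative integers $s=-m$; the normalization it naturally produces is $\prod_{i=1}^n\Gamma(s+i)$, not $\prod_{i=1}^n\Gamma(\tfrac{s+i}{2})$. The factor of $2$ in the argument of the $\Gamma$'s — which is exactly what $n_\mathbf{T}$ asserts — encodes the fact that only every other candidate pole actually occurs. This is a parity cancellation between the residues of $(\det X)_+^s$ and $(\det X)_-^s$; one sees it already for $n=1$, where $x_+^s$ and $x_-^s$ each have simple poles at every negative integer, but $|x|^s=x_+^s+x_-^s$ has poles only at $-1,-3,-5,\ldots$ because the residues at even $-2m$ cancel. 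Establishing the analogous cancellation for each stratum of $\{\det X=0\}$ in the $n\times n$ case is the actual content of the result — it is what the Sato--Shintani functional-equation calculation you mention in passing provides, and it is not a consequence of the $b$-function alone. As written, your proof defers this to a parenthetical and so essentially reduces to citing a result of the same depth as \cite[Theorem~5.12]{BSZ06}. A smaller point: your closing remark that for $\xi_1\neq\xi_2$ one obtains ``genuine poles that cannot be removed by the same $n_\mathbf{T}$'' misreads the paper's normalization, which already shifts by $[\xi_1+\xi_2]$ and hence \emph{is} $\prod_i\Gamma(\tfrac{s+i+1}{2})$ in that case; the paper's restriction to $\xi_1=\xi_2$ is motivated by the unitarity considerations discussed later (the associated Hermitian form is only $\SL(2n,\RR)$-invariant for $\xi_1\neq\xi_2$), not necessarily by a failure of holomorphy.
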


Note that by \cite[Proposition 14.10 and Remark thereafter]{K86} together with \eqref{eq:HolomorphicT}, the adjoint of $\mathbf{T}_{\xi,\lambda}$ with respect to the invariant pairing $(\cdot\,|\,\cdot)_G$ from \eqref{eq:InvPairingG} is given by $\mathbf{T}_{w_G(\xi,-\lambda)}$, i.e.
\begin{equation}\label{eq:TonySymmetricG}
    ( \mathbf{T}_{\xi,\lambda}f\,|\, f')_G = ( f\,|\, \mathbf{T}_{w_G(\xi,-\lambda)}f')_G \qquad (f\in\pi_{\xi,\lambda},f'\in\pi_{w_G(\xi,-\lambda)}).
\end{equation}

\subsection{Unitary representations of \texorpdfstring{$G$}{G}}\label{sec:UnirrepsG}

For $\xi\in(\ZZ/2\ZZ)^2$ and $\lambda\in\RR^2$ with $\xi_1=\xi_2$ and $\lambda_1+\lambda_2=0$ we can use the invariant pairing $(\cdot\,|\,\cdot)_G$ of \eqref{eq:InvPairingG} to define an invariant Hermitian form on $\pi_{\xi,\lambda}$ by
\[
\langle f_1,f_2\rangle_{\xi,\lambda}=(f_1\,|\,\overline{\mathbf{T}_{\xi,\lambda}f_2})_G.
\]

\begin{theorem}[{see \cite[Proposition 7.7 and 7.8]{BSZ06}, \cite[Theorems 4.A, 4.B, 4.D and 4.E]{Sah95}, \cite{SS90}}]\label{thm:BigHermitianFormPositiveSemidefinite}
For $\xi\in(\ZZ/2\ZZ)^2$ and $\lambda\in\RR^2$ with $\xi_1=\xi_2$ and $\lambda_1+\lambda_2=0$, the invariant Hermitian form $\langle\,\cdot \,,\,\cdot \,\rangle_{\xi,\lambda}$ on $\pi_{\xi,\lambda}$ is positive semidefinite if and only if
$$ \lambda_1=-\lambda_2 \in \{\ldots,-\tfrac{5}{2},-\tfrac{3}{2},-\tfrac{1}{2}\}\cup(-\tfrac{1}{2},\tfrac{1}{2})\cup\{\tfrac{1}{2},1,\ldots,\tfrac{n}{2}\}. $$
\end{theorem}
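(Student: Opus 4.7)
The plan is to reduce the positivity question to the eigenvalues of $\mathbf{T}_{\xi,\lambda}$ on $K$-types, where $K = \mathrm{O}(2n)$. Since $G/P_G$ is a compact $K$-space with stabilizer $M_G \cap K = \mathrm{O}(n) \times \mathrm{O}(n)$, Frobenius reciprocity gives $\pi_{\xi,\lambda}|_K \simeq \mathrm{Ind}_{M_G\cap K}^K\xi$, which is multiplicity-free. Since $\mathbf{T}_{\xi,\lambda}$ is $K$-equivariant, Schur's lemma forces it to act by a scalar $c_\sigma(\xi,\lambda)$ on each $K$-isotypic component, and these scalars depend holomorphically on $\lambda$ by Proposition~\ref{prop:storTonyholomorphic}. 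The form $\langle\,\cdot\,,\,\cdot\,\rangle_{\xi,\lambda}$ is therefore positive semidefinite precisely when $c_\sigma(\xi,\lambda) \geq 0$ for every $\sigma$ occurring in $\pi_{\xi,\lambda}$.

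I would next anchor the analysis at $\lambda = (0,0)$. There the inducing character is unitary, so the $L^2$-inner product on $\pi_{\xi,0}$ is positive definite and $G$-invariant; by Schur's lemma, $\mathbf{T}_{\xi,0}$ is a single scalar, which one checks is positive (after the normalization \eqref{eq:HolomorphicT}) by evaluating on one explicit $K$-type vector, e.g.\ the spherical vector when $\xi = 0$. Restricting to the real line $\lambda = (t,-t)$, each $c_\sigma(\xi,(t,-t))$ is an entire real-valued function of $t$, so sign changes occur only across zeros. Using the rank-one reduction along the restricted root $\lambda_1 - \lambda_2$ together with the $\mathrm{O}(n)\times\mathrm{O}(n)$-equivariant computations of \cite[\S 7]{BSZ06}, the ratio $c_\sigma(\xi,(t,-t))/c_{\sigma_{\min}}(\xi,(t,-t))$ is an explicit product of gamma-function quotients with arguments linear in $t$ and depending on the highest weight of $\sigma$; all zeros and poles lie at half-integers.

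A case-by-case analysis for $\xi_1 = \xi_2$ of this gamma-product then gives the theorem. On $(-\tfrac{1}{2},\tfrac{1}{2})$ none of the $c_\sigma$ vanishes, so positivity propagates from $t = 0$; this is Stein's complementary series. At $t \in \{\tfrac{1}{2},1,\dots,\tfrac{n}{2}\}$ a coherent block of $c_\sigma$ vanishes simultaneously, so the form degenerates but the induced form on the quotient $\pi_{\xi,\lambda}/\ker(\mathbf{T}_{\xi,\lambda})$ is positive (the small irreducible quotients); at $t \in \{-\tfrac{1}{2},-\tfrac{3}{2},\ldots\}$ the analogous simultaneous vanishing yields the positive Speh quotient. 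For any other real $t$, some $c_\sigma$ has already changed sign while another has not, producing an indefinite form.

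The main obstacle is the last step: verifying that at each claimed parameter the signs of \emph{all} nonzero $c_\sigma$ agree, and that the vanishing $c_\sigma$ collectively cut out a subspace that is exactly $\ker(\mathbf{T}_{\xi,\lambda})$. This requires a careful combinatorial bookkeeping of poles and zeros of the gamma products as the $K$-type varies, where zeros of different factors can in principle interfere. This is precisely the content of \cite{BSZ06, Sah95, SS90}, and a practical proof here would synthesize those computations rather than redo them; alternatively, one may appeal to the classification of irreducible unitary representations of $\GL(2n,\RR)$ due to Vogan \cite{Vog86}, in which the stated set is the full list of unitary parameters in this one-parameter family.
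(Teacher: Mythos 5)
The paper itself gives no proof of this theorem --- it is stated with citations to \cite{BSZ06,Sah95,SS90} and no argument is supplied in the text --- so there is no internal proof to compare against. Your $K$-type sketch is precisely the strategy used in the cited references, most explicitly in \cite[\S 7]{BSZ06}: multiplicity-freeness of $\Ind_{O(n)\times O(n)}^{O(2n)}\xi$ (via the symmetric pair $(O(2n),O(n)\times O(n))$ being Gelfand and the $\theta$-symmetry of the character $\xi$), holomorphic scalar eigenvalues $c_\sigma(\lambda)$ of $\mathbf{T}_{\xi,\lambda}$ on each $K$-isotype, and propagation of signs along $\lambda_1=-\lambda_2\in\RR$ controlled by explicit gamma quotients. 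The combinatorial bookkeeping of zeros and sign changes that you defer to the references is indeed where all the work lives, so your proposal is a faithful outline of the cited proof rather than an independent argument. One caveat worth flagging in your anchoring step: the normalization $n_{\mathbf{T}}(\xi,\lambda)$ in \eqref{eq:HolomorphicT} has a pole at $\lambda_1-\lambda_2=0$ (from the $i=n$ factor when $\xi_1=\xi_2$), which must be cancelled by a corresponding pole of the unnormalized $T_{\xi,\lambda}$ (coming from the zeta integral $\int_{\RR^{n\times n}}|\det X|^{\lambda_1-\lambda_2-n}\,dX$) in order for $\mathbf{T}_{\xi,0}$ to be a \emph{nonzero} scalar; Proposition~\ref{prop:storTonyholomorphic} gives holomorphy but not nonvanishing, so this cancellation plus the irreducibility of $\pi_{\xi,0}$ must both be checked before asserting positive definiteness at $\lambda=0$. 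This is true but not automatic.
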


This allows us to identify four different kinds of unitary representations for $\xi_1=\xi_2$:
\begin{enumerate}
    \item \emph{Unitary degenerate series} $\pi_{\xi,\lambda}$ for $\lambda\in i\RR^2$.
    \item \emph{Stein's complementary series} $\pi_{\xi,\lambda}^{\operatorname{c.s.}}=\pi_{\xi,\lambda}$ for $\lambda_1=-\lambda_2\in(-\frac{1}{2},0)$.
    \item \emph{Small irreducible quotients} $\pi_{\xi,\lambda}^{\operatorname{small}}=\pi_{\xi,\lambda}/\ker(\mathbf{T}_{\xi,\lambda})$ for $\lambda_1=-\lambda_2\in \{\tfrac{1}{2},1,\dots,\frac{n}{2}\}$.
    \item \emph{Speh representations} $\pi_{\xi,\lambda}^{\operatorname{Speh}}=\pi_{\xi,\lambda}/\ker(\mathbf{T}_{\xi,\lambda})$ for $\lambda_1=-\lambda_2\in \{-\frac{1}{2},-\frac{3}{2},-\frac{5}{2},\ldots\}$.
\end{enumerate}

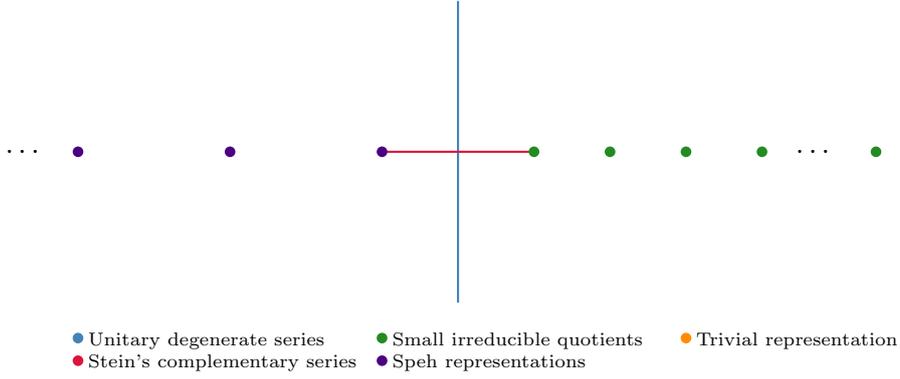
\begin{figure}\label{fig:UnirrepsG}
    \centering
    \begin{tikzpicture}

    % Define colors
    \definecolor{imaginaryblue}{RGB}{70,130,180} % Steel Blue
    \definecolor{realred}{RGB}{220,20,60} % Crimson
    \definecolor{dotgreen}{RGB}{34,139,34} % Forest Green
    \definecolor{dotorange}{RGB}{255,140,0} % Dark Orange
    \definecolor{dotpurple}{RGB}{75,0,130}

    \draw[thick,imaginaryblue] (0,-2) -- (0,2);

    \draw[realred,thick] (0,0) -- (1,0);
    \draw[realred,thick] (0,0) -- (-1,0);
 \foreach \x in {1,3,5} {
        \fill[dotpurple] (-\x,0) circle(2pt);
    }

    \foreach \x in {1,2,3,4} {
        \fill[dotgreen] (\x,0) circle(2pt);
    }
    \fill[dotgreen] (5.5,0) circle(2pt);
    \node at (4.7,0) {\dots};
    \node at (-5.7,0) {\dots};
    \fill[dotorange] (6.5,0) circle(2pt);

    \fill[imaginaryblue] (-5,-2.47) circle (2pt);
    \node[right] at (-5,-2.5) {\tiny Unitary degenerate series};
     \fill[realred] (-5,-2.77) circle(2pt);
     \node[right] at (-5,-2.8) {\tiny Stein's complementary series};
    \fill[dotgreen] (-1,-2.47) circle(2pt);
     \node[right] at (-1,-2.5) {\tiny Small irreducible quotients};
     \fill[dotpurple] (-1,-2.77) circle(2pt);
     \node[right] at (-1,-2.8) {\tiny Speh representations};
     \fill[dotorange] (3,-2.47) circle(2pt);
     \node[right] at (3,-2.5) {\tiny Trivial representation};
\end{tikzpicture}
    \caption{The unitary representations related to $\pi_{\xi,\lambda}$ plotted for the parameters $\lambda_1-\lambda_2\in \CC$.}
\end{figure}

Note that the last small irreducible quotient $\pi_{\xi,\lambda}^{\operatorname{small}}$ for $\xi_1=\xi_2$ and $\lambda_1=-\lambda_2=\frac{n}{2}$ is the trivial representation.

\begin{remark}
    Using the results of \cite{SS90} it can be shown that all Speh representations have the same Gelfand--Kirillov dimension as the full degenerate series $\pi_{\xi,\lambda}$, namely $n^2$. In that sense, they can be considered \emph{large} irreducible quotients of $\pi_{\xi,\lambda}$. In contrast to this, the results of \cite{BSS90} and \cite{BSZ06} imply that the Gelfand--Kirillov dimension of $\pi_{\xi,\lambda}^{\mathrm{small}}$ for $\lambda_1=-\lambda_2=\frac{n-k}{2}$, $k\in\{0,\ldots,n-1\}$, is equal to the dimension of the subvariety of $\RR^{n\times n}$ consisting of matrices of rank at most $k$, thus justifying the name \emph{small} irreducible quotients.
\end{remark}

\subsection{Knapp--Stein intertwining operators for \texorpdfstring{$H$}{H}}\label{sec:KSforH}

Completely analogously to the case for $\pi_{\xi,\lambda}$ we define the Knapp--Stein operators for $\tau_{\eta,\nu}$. We denote by $w_H$ the element of the Weyl group $N_{K_{2n-1}}(A_H)/Z_{K_{2n-1}}(A_H)$ that acts on characters $\eta\in(\ZZ/2\ZZ)^3$ of $M_H$ and $\nu\in(\fraka_H)^*_\CC\simeq \CC^3$ by $w_H(\eta_1,\eta_2,\eta_3)=(\eta_3,\eta_2,\eta_1)$ and $w_H(\nu_1,\nu_2,\nu_3)=(\nu_3,\nu_2,\nu_1)$. It has a representative $\Tilde{w}_H\in K_{2n-1}$ given by
\[
\Tilde{w}_H=\begin{pmatrix}
    0 & 0 & -I_{n-1}\\
    0 & 1 & 0\\
    I_{n-1} & 0 & 0
\end{pmatrix}.
\]
We further let $\overline{N}_H=N_H^\intercal=\tilde{w}_HN_H\tilde{w}_H^{-1}$.

For $f\in \tau_{\eta,\nu}$ the integral
\[
S_{\eta,\nu}f(h)=\int_{\overline{N}_H}f(hw_H\overline{n})d\overline{n}\qquad (h\in H),
\]
converges absolutely in some range of parameters $\nu \in \CC^3$ and defines the Knapp--Stein intertwining operator 
\[
S_{\eta,\nu}:\tau_{\eta,\nu}\to \tau_{w_{H}(\eta,\nu)}.
\]
On the open dense subset of $\overline{N}_H$ consisting of 
\[
\overline{n}=\begin{pmatrix}
    I_{n-1} & & \\
    v^\intercal & 1 & \\
    X & u & I_{n-1}
\end{pmatrix}\qquad \text{with $X$ invertible and  $v^\intercal X^{-1}u\neq 1$,}
\]
we can decompose  
\begin{equation*}
w_H\overline{n}
=\begin{pmatrix}
    I_{n-1} & & \\
    -v^\intercal\gamma_1^{-1} & 1 & \\
    -\gamma_1^{-1} & -\gamma_1^{-1}\gamma_2^{-1}u & I_{n-1}
\end{pmatrix}
\begin{pmatrix}
    -\gamma_1 & & \\
    & \gamma_2 & \\
    & & -\gamma_3
\end{pmatrix}
\begin{pmatrix}
    I_{n-1} & \gamma_1^{-1}u & \gamma_1^{-1}\\
     & 1 & -\gamma_2^{-1}v^\intercal X^{-1}\\
     & & I_{n-1}
\end{pmatrix},
\end{equation*}
where 
\[
\gamma_1=X,\qquad \gamma_2=1-v^\intercal X^{-1}u,\qquad \gamma_3=X^{-1}\Big(I_{n-1}+\frac{uv^\intercal X^{-1}}{1-v^\intercal X^{-1}u}\Big).
\]
Using this and making the substitutions $X\mapsto -X^{-1}$, $v^\intercal\to -v^\intercal\gamma_1^{-1}$ and $u\to -\gamma_1^{-1}\gamma_2^{-1}u$ we get 
\begin{multline*}
S _{\eta ,\nu }f(h)=\int_{\RR^{(n-1)\times(n-1)}\times \RR^{n-1}\times \RR^{n-1}}|\det(X)|^{\nu _1-\nu _3-n}_{\eta _1+\eta _3}\\ \times|1-v^\intercal X^{-1}u|^{\nu _2-\nu _3-\frac{n}{2}}_{\eta _2+\eta _3}f\bigg(h\begin{pmatrix}
    1 & & \\
    v^\intercal & 1 &\\
    X & u & 1
\end{pmatrix}\bigg) d(X,u,v).
\end{multline*}
This integral converges absolutely whenever $\Re(\nu_1-\nu_2),\Re(\nu_2-\nu_3)>\frac{n}{2}-1$. (This can be seen by embedding $\tau_{\eta,\nu}$ inside $\tau_{\eta',\nu'}^{\ps}$ by Lemma~\ref{lem:degseriesInsidePrincipalSeries} and decomposing the Knapp--Stein intertwining operator with respect to $w_H$ in terms of Knapp--Stein intertwining operators with respect to simple reflections for which the convergence is well known, see e.g. \cite[Theorem 7.12]{K86}.)
Consider the following normalization
\begin{equation}
n_{\mathbf{S}}(\eta,\nu)=\Gamma(\tfrac{\nu_1-\nu_2+1-\frac{n}{2}+[\eta_1+\eta_2]}{2})\Gamma(\tfrac{\nu_2-\nu_3+1-\frac{n}{2}+[\eta_2+\eta_3]}{2})\prod_{i=2}^n\Gamma(\tfrac{\nu_1-\nu_3+i-n+[\eta_1+\eta_3]}{2})\label{eq:HolomorphicS}
\end{equation}
and set $\mathbf{S}_{\eta,\nu}=n_{\mathbf{S}}(\eta,\nu)^{-1}S_{\eta,\nu}$.

\begin{proposition}\label{prop:lilleTonyholomorphic}
The family $\mathbf{S}_{\eta,\nu}$ depends holomorphically on $\nu\in \C^3$ for all $\eta\in (\ZZ/2\ZZ)^3$.
\end{proposition}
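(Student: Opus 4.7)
My plan mirrors the proof of Proposition~\ref{prop:storTonyholomorphic} for $G$ (proved in \cite[Theorem 5.12]{BSZ06}), namely decomposing $S_{\eta,\nu}$ into a composition of simpler intertwining operators whose meromorphic behavior is controlled by a single Gamma function apiece, and matching the resulting product with $n_{\mathbf{S}}(\eta,\nu)$.

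First I would apply the matrix-determinant identity $1 - v^\intercal X^{-1}u = \det(X - uv^\intercal)/\det X$ to the explicit integral for $S_{\eta,\nu}$ from Section~\ref{sec:KSforH}, followed by the substitution $Y = X - uv^\intercal$ (with $dX = dY$). This rewrites the integrand symmetrically as a product of $|\det|^{\bullet}$-type factors on the matrix space, and, together with further changes of variables---trading the $v$-integration for a scalar variable $\xi = v^\intercal Y^{-1}u$ together with $n-2$ spectator coordinates, and symmetrically for $u$---one can factor $S_{\eta,\nu}$ into an iterated composition of three types of integrals: (i) a $\GL(n-1)$ matrix zeta integral over $\RR^{(n-1)\times(n-1)}$, contributing the $n-1$ Gamma factors $\prod_{i=2}^n\Gamma(\tfrac{\nu_1-\nu_3+i-n+[\eta_1+\eta_3]}{2})$ via the classical Siegel/Gindikin--Karpelevich normalization (essentially Proposition~\ref{prop:storTonyholomorphic} with $\GL(n-1)$ in place of $\GL(2n,\RR)$); (ii) a scalar 1-D integral of $|1-\xi|^{\nu_2-\nu_3-n/2}_{\eta_2+\eta_3}$ against a Schwartz function, contributing $\Gamma(\tfrac{\nu_2-\nu_3+1-n/2+[\eta_2+\eta_3]}{2})$; and (iii) a symmetric 1-D integral contributing $\Gamma(\tfrac{\nu_1-\nu_2+1-n/2+[\eta_1+\eta_2]}{2})$. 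The product of these Gammas is precisely $n_{\mathbf{S}}(\eta,\nu)$, so dividing $S_{\eta,\nu}$ by $n_{\mathbf{S}}(\eta,\nu)$ yields the holomorphic family $\mathbf{S}_{\eta,\nu}$.

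The main obstacle is executing this iterated-integral decomposition rigorously: after the changes of variables, one must verify that upon fixing the "outer" variables the remaining partial integrals belong to an appropriate Schwartz-type class so that the meromorphic continuation can be taken separately in each group of variables, and one must track Jacobians carefully. An alternative---parallel to the strategy of \cite{BSZ06}---is to use Lemma~\ref{lem:degseriesInsidePrincipalSeries} to embed $\tau_{\eta,\nu}$ into the minimal principal series $\tau^{\ps}_{\eta',\nu'}$, decompose the corresponding principal series Knapp--Stein for $w_H$ viewed as an element of $S_{2n-1}$ along a reduced expression of length $\ell(w_H) = n^2-1$ into rank-one Knapp--Stein operators each normalized by a single Gamma factor, and then match the resulting $(n^2-1)$-fold product of Gammas with $n_{\mathbf{S}}(\eta,\nu)$ modulo the (non-vanishing) meromorphic scalar relating $S_{\eta,\nu}$ to the principal series operator via the embedding $\iota_{\eta,\nu}$; the main bookkeeping task there is to identify this scalar, e.g.\ by evaluation on a suitable test function.
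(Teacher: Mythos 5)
Your general idea---factorize $S_{\eta,\nu}$ and match Gamma factors---is the right strategy, but neither of your two proposed routes coincides with the paper's, and your second route has a genuine gap.

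The paper's proof factorizes $w_H$ into only \emph{three} pieces, adapted to the \emph{maximal} parabolic $Q_H$ with Levi $\GL(2n-2,\RR)\times\GL(1,\RR)$: two rank-one intertwiners $\mathbf U_{\eta,\nu}$ and $\mathbf U_{w_H(\eta,-\nu)}^\intercal$, each normalized by a single Gamma factor via Riesz distributions, and a middle piece which is $\Ind_{Q_H}^H(\mathbf R_{(\eta_1,\eta_3),(\nu_1,\nu_3)}\otimes\mathrm{id})$ with $\mathbf R$ the $\GL(2n-2,\RR)$ Knapp--Stein operator, whose holomorphy after a product of $n-1$ Gammas is exactly \cite[Theorem~5.12]{BSZ06}. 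Crucially, the hard part---collapsing the many Gindikin--Karpelevich factors coming from a degenerate induction datum down to a short list---is already absorbed into the cited result for $\GL(2n-2,\RR)$.

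Your second route (go down to the minimal parabolic, take a reduced expression of length $n^2-1$ for the lift $\sigma$ of $w_H$, compose rank-one operators) has a concrete problem for $n\geq 3$. Because $\overline N_\sigma=\overline N_H$, the principal-series Knapp--Stein operator $S^{\ps}_\sigma$ restricted to the embedded subspace $\iota_{\eta,\nu}(\tau_{\eta,\nu})$ \emph{is} $\iota_{w_H(\eta,\nu)}\circ S_{\eta,\nu}$ on the nose (up to a sign), so the "meromorphic scalar" you invoke is $\pm1$ and does not absorb anything. Meanwhile the reduced-expression decomposition produces $n^2-1$ Gamma factors whereas $n_{\mathbf S}(\eta,\nu)$ has only $n+1$; for instance, the pairs $(j,n+k)$ with $j,k\in\{1,\dots,n-1\}$ contribute $(n-1)^2$ factors depending on $\nu_1-\nu_3$, with arguments $\nu_1-\nu_3+m$ appearing with multiplicity $n-1-|m|$, while $n_{\mathbf S}$ contains only the $n-1$ factors for $m\in\{2-n,\dots,0\}$ each with multiplicity one. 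The ratio of the two normalizations therefore has genuine poles in $\nu$, and $S_{\eta,\nu}/n_{\mathbf S}$ is \emph{not} holomorphic merely because $S^{\ps}_\sigma/\mathrm{GK}$ is. The excess Gamma poles are in fact cancelled by extra zeros that $S_{\eta,\nu}$ acquires on the degenerate subspace---this is precisely the nontrivial content of a BSZ06-type theorem, not something you get for free from the minimal-parabolic factorization. Your first route (direct iterated change of variables) plausibly works but, as you acknowledge, is technically delicate; it is also not what the paper does.

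To repair your second route you would essentially have to re-prove the $\GL(n-1,\RR)$ analogue of \cite[Theorem~5.12]{BSZ06} along the way, which is what factoring through $Q_H$ lets you avoid.
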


\begin{proof}
    This follows from the factorization for $\mathbf{S}_{\eta,\nu}$ obtained in Lemma~\ref{lemma:FactorizationOfS} in the next section.
\end{proof}

Similar to \eqref{eq:TonySymmetricG}, a careful inspection of the normalization \eqref{eq:HolomorphicS} shows that the Knapp--Stein intertwiner $\mathbf{S}_{\eta,\nu}$ satisfies
\begin{equation}\label{eq:TonySymmetricH}
    ( \mathbf{S}_{\eta,\nu}f\,|\, f')_H = ( f\,|\, \mathbf{S}_{w_H(\eta,-\nu)}f')_H \qquad (f\in\tau_{\eta,\nu},f'\in\tau_{w_H(\eta,-\nu)}).
\end{equation}

\subsection{Unitary representations of \texorpdfstring{$H$}{H}}\label{sec:UnirrepsH}

For $\eta\in(\ZZ/2\ZZ)^3$ and $\nu\in\CC^3$ with $\eta_1=\eta_3$ and $\nu_1=-\nu_3\in\RR$, $\nu_2\in i\RR$ we can use the invariant pairing $(\cdot\,|\,\cdot)_H$ of \eqref{eq:InvPairingH} to define an invariant Hermitian form on $\tau_{\eta,\nu}$ by
\[
\langle f_1,f_2\rangle_{\eta,\nu}=(f_1\,|\,\overline{\mathbf{S}_{\eta,\nu}f_2})_H.
\]

To identify complementary series and irreducible unitarizable quotients of $\tau_{\eta,\nu}$, we first use induction in stages to relate the representations $\tau_{\eta,\nu}$ to degenerate series representations induced from the maximal parabolic subgroup
$$ Q_H = \begin{pmatrix}\GL(2n-2,\RR)&\RR^{(2n-2)\times1}\\&\GL(1,\RR)\end{pmatrix}. $$
We use notation anologous to the one in Section~\ref{sec:UnirrepsG} and denote by $\varpi_{\xi,\lambda}$, $\varpi_{\xi,\lambda}^{\textrm{c.s.}}$, $\varpi_{\xi,\lambda}^{\textrm{small}}$ and $\varpi_{\xi,\lambda}^{\textrm{Speh}}$ the degenerate series, Stein's complementary series, small representations and Speh representations of $\GL(2n-2,\RR)$, respectively. Moreover, let
$$ \mathbf{R}_{\xi,\lambda}:\varpi_{(\xi_1,\xi_2),(\lambda_1,\lambda_2)}\to\varpi_{(\xi_2,\xi_1),(\lambda_2,\lambda_1)} $$
be the family of Knapp--Stein intertwining operators analogous to $\mathbf{T}_{\xi,\lambda}$ but for $\GL(2n-2,\RR)$ instead of $\GL(2n,\RR)$.

Using slightly different notation as before, we write $\Ind_{Q_H}^H(\varpi\otimes\chi_{\varepsilon,s})$ for the representation parabolically induced from the representation of $Q_H$ given by $\varpi$ on $\GL(2n-2,\RR)$ and by the character $\chi_{\varepsilon,s}(x)=|x|_\varepsilon^s$ on $\GL(1,\RR)$ (normalized induction). A similar notation is also used for induction from $P_H$ in the next proof.

\begin{lemma}\label{lemma:FactorizationOfS}
    There is a holomorphic family of standard intertwining operators
    $$ \mathbf{U}_{\eta,\nu}:\tau_{\eta,\nu} \to \Ind_{Q_H}^H(\varpi_{(\eta_1,\eta_3),(\nu_1,\nu_3)}\otimes\chi_{\eta_2,\nu_2}) $$
    such that whenever $\nu_2-\nu_3\not\in\ZZ$, the operator $\mathbf{U}_{\eta,\nu}$ is an isomorphism and
    \begin{equation}
        \mathbf{S}_{\eta,\nu}=\mathbf{U}_{w_H(\eta,-\nu)}^\intercal\circ\Ind_{Q_H}^H(\mathbf{R}_{(\eta_1,\eta_3),(\nu_1,\nu_3)}\otimes\id_{\chi_{\eta_2,\nu_2}})\circ\mathbf{U}_{\eta,\nu},\label{eq:FactorizationNormalized}
    \end{equation}
    where $\mathbf{U}_{w_H(\eta,-\nu)}^\intercal:\Ind_{Q_H}^H(\varpi_{(\eta_3,\eta_1),(\nu_3,\nu_1)}\otimes\chi_{\eta_2,\nu_2})\to\tau_{w_H(\eta,\nu)}$
    denotes the intertwining operator dual to $\mathbf{U}_{w_H(\eta,-\nu)}$ with respect to the $H$-invariant pairings \eqref{eq:InvPairingH}. In particular, for $\nu_1-\nu_2,\nu_2-\nu_3\not\in\ZZ$,
    $$ \tau_{\eta,\nu}/\ker(\mathbf{S}_{\eta,\nu}) \simeq \Ind_{Q_H}^H\Big(\big[\varpi_{(\eta_1,\eta_3),(\nu_1,\nu_3)}/\ker(\mathbf{R}_{(\eta_1,\eta_3),(\nu_1,\nu_3)})\big]\otimes\chi_{\eta_2,\nu_2}\Big). $$
\end{lemma}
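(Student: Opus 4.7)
The plan is to construct $\mathbf{U}_{\eta,\nu}$ via induction in stages through an intermediate parabolic, and then derive the factorization from a decomposition of the Weyl element $w_H$. Let $P''_H\subset H$ be the parabolic with Levi $\GL(n-1,\RR)\times \GL(n-1,\RR)\times \GL(1,\RR)$ occupying the positions $(1,\ldots,n-1;\,n,\ldots,2n-2;\,2n-1)$, respectively; since $P''_H\subset Q_H$, induction in stages furnishes a canonical isomorphism
\[
\Ind_{P''_H}^H\bigl((\eta_1,\eta_3,\eta_2)\otimes e^{(\nu_1,\nu_3,\nu_2)}\otimes 1\bigr)\;\simeq\;\Ind_{Q_H}^H\bigl(\varpi_{(\eta_1,\eta_3),(\nu_1,\nu_3)}\otimes\chi_{\eta_2,\nu_2}\bigr),
\]
once the half-sum shifts are matched. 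The cyclic Weyl element $\sigma=(n,n+1,\ldots,2n-1)\in S_{2n-1}$ conjugates the Levi of $P_H$ to that of $P''_H$, and via the embedding $\tau_{\eta,\nu}\hookrightarrow \tau_{\eta',\nu'}^{\ps}$ of Lemma~\ref{lem:degseriesInsidePrincipalSeries} the standard Knapp--Stein operator for $\sigma$ on the minimal principal series restricts to an intertwining operator $U_{\eta,\nu}:\tau_{\eta,\nu}\to \Ind_{P''_H}^H(\cdots)$, given by integration over the $(n-1)$-dimensional unipotent subgroup corresponding to the roots between position $n$ and positions $n+1,\ldots,2n-1$ (the $n-1$ reflections crossed by $\sigma$). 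After composing with the induction-in-stages isomorphism and dividing by a product of Gamma factors of the form
\[
n_\mathbf{U}(\eta,\nu)=\prod_{j=1}^{n-1}\Gamma\Bigl(\tfrac{\nu_2-\nu_3+\tfrac{n}{2}-j+[\eta_2+\eta_3]}{2}\Bigr)
\]
(modulo parity and shift conventions), the resulting family $\mathbf{U}_{\eta,\nu}$ becomes holomorphic in $\nu\in\CC^3$. The Knapp--Stein $c$-function identity expresses the composition of $\mathbf{U}_{\eta,\nu}$ with its inverse as a product of factors $\sin(\pi(\nu_2-\nu_3+c))$ for half-integer $c$, hence $\mathbf{U}_{\eta,\nu}$ is an isomorphism precisely when $\nu_2-\nu_3\notin\ZZ$.

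For the factorization formula, I would observe that the Weyl element $w_H$ for $P_H$ admits the product decomposition
\[
w_H=\sigma^{-1}\cdot w_0^{\GL(2n-2)}\cdot\sigma
\]
modulo the Weyl group of the Levi of $P_H$, where $w_0^{\GL(2n-2)}$ is the nontrivial Weyl element of $\GL(2n-2)$ swapping its two $\GL(n-1)$ blocks. The root subsystems associated with the three factors are pairwise disjoint, so this is a length-additive factorization in the sense of Knapp--Stein, and the standard multiplicativity of normalized intertwining operators then yields
\[
\mathbf{S}_{\eta,\nu}=\mathbf{U}_{w_H(\eta,-\nu)}^\intercal\circ\Ind_{Q_H}^H\bigl(\mathbf{R}_{(\eta_1,\eta_3),(\nu_1,\nu_3)}\otimes\id\bigr)\circ\mathbf{U}_{\eta,\nu}.
\]
The middle factor is the Knapp--Stein operator for $w_0^{\GL(2n-2)}$, which commutes with parabolic induction from the Levi and therefore takes the displayed form; the rightmost factor is $\mathbf{U}_{\eta,\nu}$ by construction; and $\mathbf{U}_{w_H(\eta,-\nu)}^\intercal$ is the Knapp--Stein for $\sigma^{-1}$ arising from the standard adjoint relation under the invariant pairings of \eqref{eq:InvPairingH} (dualizing a Knapp--Stein for $\sigma$ at parameters $(\eta,-\nu)$ produces one for $\sigma^{-1}$ at $(\eta,\nu)$).

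Finally, the quotient isomorphism follows immediately: when $\nu_2-\nu_3,\nu_1-\nu_2\notin\ZZ$, both $\mathbf{U}_{\eta,\nu}$ and $\mathbf{U}_{w_H(\eta,-\nu)}^\intercal$ are isomorphisms (the second condition $\nu_1-\nu_2\notin\ZZ$ corresponds to the $\nu$-difference $(-\nu_2)-(-\nu_1)$ appearing at parameters $w_H(\eta,-\nu)$). Consequently $\ker\mathbf{S}_{\eta,\nu}$ corresponds under $\mathbf{U}_{\eta,\nu}$ to the kernel of $\Ind_{Q_H}^H(\mathbf{R}_{(\eta_1,\eta_3),(\nu_1,\nu_3)}\otimes\id)$, and passing to quotients gives the displayed isomorphism. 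The main obstacle in carrying out this plan will be the precise Gamma-function bookkeeping needed to pin down $n_\mathbf{U}$ so that \eqref{eq:FactorizationNormalized} holds exactly rather than up to a nonvanishing scalar, matching each Gamma factor in $n_\mathbf{U}$ against the corresponding one in $n_\mathbf{S}$ from \eqref{eq:HolomorphicS} along the reduced factorization of $w_H$.
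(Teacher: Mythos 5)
Your overall strategy mirrors the paper's: factor $w_H$ through an intermediate parabolic with Levi $\GL(n-1,\RR)\times\GL(n-1,\RR)\times\GL(1,\RR)$, use induction in stages to identify the resulting degenerate series with $\Ind_{Q_H}^H(\varpi\otimes\chi)$, and apply the multiplicativity of standard intertwining operators to a length-additive decomposition of $w_H$ (you use $\sigma^{-1}w_0^{\GL(2n-2)}\sigma$, the paper uses the inverse convention $\tilde{w}^\intercal\cdot(\text{middle})\cdot\tilde{w}$ with $\tilde w$ corresponding to $\sigma^{-1}$; both are fine). The length-additivity check and the adjointness argument for $\mathbf{U}_{w_H(\eta,-\nu)}^\intercal$ are also consistent with what the paper does.

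The genuine gap is in your proposed normalization $n_{\mathbf U}(\eta,\nu)=\prod_{j=1}^{n-1}\Gamma\bigl(\tfrac{\nu_2-\nu_3+\frac n2-j+[\eta_2+\eta_3]}{2}\bigr)$, which has $n-1$ Gamma factors. This is the $c$-function count one would get from treating $U_{\eta,\nu}$ as a full Knapp--Stein operator for $\sigma$ on the \emph{minimal} principal series (one factor per positive root flipped by $\sigma$). But $U_{\eta,\nu}$ is a \emph{degenerate} series intertwiner. Although the integration domain is indeed $(n-1)$-dimensional, only \emph{one} of those $n-1$ Riesz-type factors actually appears in the kernel when acting on $\tau_{\eta,\nu}$: the $P_H$-equivariance of $f$ under the $\GL(n-1)$ Levi block absorbs the remaining dependence, leaving a kernel $|x_{n-1}|^{\nu_2-\nu_3-\frac n2}_{\eta_2+\eta_3}$ and hence a single normalizing factor $\Gamma\bigl(\tfrac{\nu_2-\nu_3+1-\frac n2+[\eta_2+\eta_3]}{2}\bigr)$. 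Your extra $n-2$ factors do not appear. This is not a side detail: $n_{\mathbf S}(\eta,\nu)$ in \eqref{eq:HolomorphicS} has exactly $n+1$ Gamma factors, of which $n-1$ account for $\mathbf R$; so $\mathbf U$ and $\mathbf U^\intercal$ must together contribute exactly two, not $2(n-1)$. With your normalization, $n_{\mathbf U}\cdot n_{\mathbf U^\intercal}\cdot n_{\mathbf R}$ has $3n-3$ factors, and \eqref{eq:FactorizationNormalized} fails as an identity (it would be off by a nontrivial meromorphic function for every $n\ge3$). So the issue you flag as ``Gamma-function bookkeeping'' is actually conceptual: you need to recognize that restricting the minimal-principal-series KS operator to the degenerate subrepresentation suppresses $n-2$ of the apparent singularities, and normalize accordingly, before the factorization can close up.
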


The notation $\Ind_{Q_H}^H(\mathbf{R}_{(\eta_1,\eta_3),(\nu_1,\nu_3)}\otimes\id_{\chi_{\eta_2,\nu_2}})$ is alluding to the fact that parabolic induction is a functor.

\begin{proof}
    Let
    $$ U_{\eta,\nu}f(h) = \int_{\overline{N}_H\cap\tilde{w}^{-1}N_H\tilde{w}} f(h\tilde{w}\overline{n})\,d\overline{n}, \qquad \mbox{where }\tilde{w} = \begin{pmatrix}I_{n-1}&&\\&&I_{n-1}\\&1&\end{pmatrix}. $$
    Then $U_{\eta,\nu}:\tau_{\eta,\nu}=\Ind_{P_H}^H(\chi_{\eta_1,\nu_1}\otimes\chi_{\eta_2,\nu_2}\otimes\chi_{\eta_3,\nu_3})\to\Ind_{P_H'}^H(\chi_{\eta_1,\nu_1}\otimes\chi_{\eta_3,\nu_3}\otimes\chi_{\eta_2,\nu_2})$ is a family of intertwining operators, where $P_H'$ is the standard parabolic subgroup with Levi factor $\GL(n-1,\RR)\times\GL(n-1,\RR)\times\GL(1,\RR)$ (with the blocks sitting on the diagonal in this order). Unlike $P_H$, we have that $P_H'\subseteq Q_H$ and thus, by induction in stages (see e.g. \cite[Chapter VII, \S2]{K86}),
    $$ \Ind_{P_H'}^H(\chi_{\eta_1,\nu_1}\otimes\chi_{\eta_3,\nu_3}\otimes\chi_{\eta_2,\nu_2}) \simeq \Ind_{Q_H}^H(\varpi_{(\eta_1,\eta_3),(\nu_1,\nu_3)}\otimes\chi_{\eta_2,\nu_2}). $$
    Note that
    $$ \tilde{w}_H = \tilde{w}^\intercal\begin{pmatrix}&-I_{n-1}&\\I_{n-1}&&\\&&1\end{pmatrix}\tilde{w} $$
    is a minimal decomposition of the Weyl group element $w_H=[\tilde{w}_H]$, so we can apply the factorization identity for intertwining operators (see e.g. \cite[Proposition 7.10 and Theorem 8.38~(f)]{K86}). The first and the last element on the right hand side give rise to the intertwining operators $U_{w_H(\eta,-\nu)}^\intercal$ and $U_{\eta,\nu}$. By the same arguments as in \cite[Chapter I, Section D]{Spe77}, the intertwining operator for the second element on the right hand side becomes the intertwining operator $\Ind_{Q_H}^H(R_{(\eta_1,\eta_3),(\nu_1,\nu_3)}\otimes\id_{\chi_{\eta_2,\nu_2}})$, where $R_{(\eta_1,\eta_3),(\nu_1,\nu_3)}:\varpi_{(\eta_1,\eta_3),(\nu_1,\nu_3)}\to\varpi_{(\eta_3,\eta_1),(\nu_3,\nu_1)}$ is the (unnormalized) intertwiner for $\GL(2n-2,\RR)$ associated with the Weyl group element
    $$ \begin{pmatrix}&-I_{n-1}\\I_{n-1}&\end{pmatrix}. $$
    Note that $R_{(\eta_1,\eta_3),(\nu_1,\nu_3)}$ is defined analogous to $T_{\xi,\lambda}$. Now the factorization identity for intertwining operators reads
    \begin{equation}
        S_{\eta,\nu} = U_{w_H(\eta,-\nu)}^\intercal\circ\Ind_{Q_H}^H(R_{(\eta_1,\eta_3),(\nu_1,\nu_3)}\otimes\id_{\chi_{\eta_2,\nu_2}})\circ U_{\eta,\nu}.\label{eq:FactorizationUnnormalized}
    \end{equation}
    By a computation similar to the one in Section~\ref{sec:KSforH} for $S_{\eta,\nu}$, we find that
    $$ U_{\eta,\nu}f(h) = (-1)^{(n-1)\eta_3}\int_{\RR^{n-1}}|x_{n-1}|_{\eta_2+\eta_3}^{\nu_2-\nu_3-\frac{n}{2}}f\bigg(h\begin{pmatrix}I_{n-1}&&\\&1&\\&x&I_{n-1}\end{pmatrix}\bigg)\,dx, $$
    so by classical results for the Riesz distributions (see e.g. \cite{GS64}) the renormalization $\mathbf{U}_{\eta,\nu}=\Gamma(\tfrac{\nu_2-\nu_3+1-\frac{n}{2}+[\eta_2+\eta_3]}{2})^{-1}U_{\eta,\nu}$ is holomorphic and $\mathbf{U}_{\eta,\nu}$ is an isomorphism whenever
    $$ \nu_2-\nu_3\not\in\begin{cases}2\ZZ+1&\mbox{if }\eta_2=\eta_3,\\2\ZZ\setminus\{0\}&\mbox{if }\eta_2\neq\eta_3.\end{cases} $$
    (A more detailed account of these maximally degenerate series representations and their intertwining operators can be found in \cite{vDM99}.) It remains to show that the factorization identity \eqref{eq:FactorizationUnnormalized} becomes \eqref{eq:FactorizationNormalized} when replacing every operator by its normalized version. But $n_{\mathbf{S}}(\eta,\nu)$ is the product of the normalization $\Gamma(\tfrac{\nu_2-\nu_3+1-\frac{n}{2}+[\eta_2+\eta_3]}{2})$ of $\mathbf{U}_{\eta,\nu}$, the normalization $\Gamma(\tfrac{\nu_1-\nu_2+1-\frac{n}{2}+[\eta_1+\eta_2]}{2})$ of $\mathbf{U}_{w_H(\eta,-\nu)}^\intercal$ and the normalization $\prod_{i=2}^n\Gamma(\tfrac{\nu_1-\nu_3+i-n+[\eta_1+\eta_3]}{2})$ of $\mathbf{R}_{(\eta_1,\eta_3),(\nu_1,\nu_3)}$, so the proof is complete.
\end{proof}

Now, Theorem~\ref{thm:BigHermitianFormPositiveSemidefinite} allows us to determine the parameters for which the Hermitian form $\langle\cdot,\cdot\rangle_{\eta,\nu}$ on $\tau_{\eta,\nu}$ is positive (semi-)definite.

\begin{theorem}\label{thm:SmallInvariantHermitianFormPositiveSemidefinite}
    For $\eta\in(\ZZ/2\ZZ)^3$ and $\nu\in\CC^3$ with $\eta_1=\eta_3$ and $\nu_1=-\nu_3\in\RR$, $\nu_2\in i\RR$, the invariant Hermitian form $\langle\cdot,\cdot\rangle_{\eta,\nu}$ on $\tau_{\eta,\nu}$ is positive semidefinite for
    $$ \nu_1=-\nu_3 \in \{\ldots,-\tfrac{5}{2},-\tfrac{3}{2},-\tfrac{1}{2}\}\cup(-\tfrac{1}{2},\tfrac{1}{2})\cup\{\tfrac{1}{2},1,\ldots,\tfrac{n}{2}\} $$
    and under the additional assumption that $\nu_2\neq0$ the corresponding unitarizable quotient $\tau_{\eta,\nu}/\ker(\mathbf{S}_{\eta,\nu})$ is irreducible and isomorphic to
    $$ \Ind_{Q_H}^H\Big(\big[\varpi_{(\eta_1,\eta_3),(\nu_1,\nu_3)}/\ker(\mathbf{R}_{(\eta_1,\eta_3),(\nu_1,\nu_3)})\big]\otimes\chi_{\eta_2,\nu_2}\Big). $$
\end{theorem}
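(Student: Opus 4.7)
The plan is to reduce both claims to the corresponding statements for the $\GL(2n-2,\RR)$-degenerate series $\varpi_{(\eta_1,\eta_3),(\nu_1,\nu_3)}$, which are already settled by Theorem~\ref{thm:BigHermitianFormPositiveSemidefinite} applied with $n$ replaced by $n-1$. The reduction is performed via the factorization identity \eqref{eq:FactorizationNormalized} of Lemma~\ref{lemma:FactorizationOfS}.

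First I verify that Lemma~\ref{lemma:FactorizationOfS} applies. Since $\nu_1=-\nu_3\in\RR$ and $\nu_2\in i\RR\setminus\{0\}$, the differences $\nu_1-\nu_2$ and $\nu_2-\nu_3$ both have nonzero imaginary part and are therefore not integers. Hence $\mathbf{U}_{\eta,\nu}$ and $\mathbf{U}_{w_H(\eta,-\nu)}$ are isomorphisms, and the final assertion of Lemma~\ref{lemma:FactorizationOfS} directly gives the isomorphism of quotients claimed in the theorem. For the Hermitian form, observe that under our assumptions $w_H(\eta,-\nu)=(\eta,\bar\nu)$, so $\mathbf{U}_{w_H(\eta,-\nu)}^\intercal$ is, up to complex conjugation, the transpose of $\mathbf{U}_{\eta,\nu}$ with respect to the pairings \eqref{eq:InvPairingH}. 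Substituting the factorization into the definition $\langle f_1,f_2\rangle_{\eta,\nu}=(f_1\,|\,\overline{\mathbf{S}_{\eta,\nu}f_2})_H$ and using this duality, one finds that $\mathbf{U}_{\eta,\nu}$ intertwines $\langle\cdot,\cdot\rangle_{\eta,\nu}$ with the natural Hermitian form on $\Ind_{Q_H}^H(\varpi_{(\eta_1,\eta_3),(\nu_1,\nu_3)}\otimes\chi_{\eta_2,\nu_2})$ built from the form $\langle\cdot,\cdot\rangle_{(\eta_1,\eta_3),(\nu_1,\nu_3)}$ on $\varpi$ together with the unitary structure on $\chi_{\eta_2,\nu_2}$ (which is unitary since $\nu_2\in i\RR$). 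Because parabolic induction preserves positive (semi)definiteness of the inducing invariant Hermitian form, Theorem~\ref{thm:BigHermitianFormPositiveSemidefinite} for $\varpi_{(\eta_1,\eta_3),(\nu_1,\nu_3)}$ then yields the asserted characterization of the parameters $\nu_1=-\nu_3$.

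Irreducibility of the unitarizable quotient under the additional assumption $\nu_2\neq0$ follows by combining the irreducibility of $\varpi_{(\eta_1,\eta_3),(\nu_1,\nu_3)}/\ker(\mathbf{R}_{(\eta_1,\eta_3),(\nu_1,\nu_3)})$ (part of Theorem~\ref{thm:BigHermitianFormPositiveSemidefinite}) with the standard fact that the unitarily induced representation $\Ind_{Q_H}^H(\sigma\otimes\chi_{\eta_2,\nu_2})$ of an irreducible unitary representation $\sigma$ of $\GL(2n-2,\RR)$ is irreducible whenever the $\GL(1,\RR)$-parameter $\nu_2$ is purely imaginary and nonzero, so that the combined parameter is in sufficiently general position. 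The main technical obstacle I expect is the careful bookkeeping needed to verify that $\mathbf{U}_{\eta,\nu}$ really intertwines the two invariant Hermitian forms: this requires tracking the precise normalization factors that enter $\mathbf{U}$, the relation \eqref{eq:TonySymmetricH} for the adjoint of $\mathbf{S}$, and the way $\mathbf{U}_{w_H(\eta,-\nu)}^\intercal$ interacts with the complex conjugation of functions in $\tau_{\eta,\nu}$.
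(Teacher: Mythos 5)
Your approach is essentially the same as the paper's: transport the Hermitian form $\langle\cdot,\cdot\rangle_{\eta,\nu}$ via the factorization of Lemma~\ref{lemma:FactorizationOfS} to an integral over $H/Q_H$ of the invariant form on $\varpi_{(\eta_1,\eta_3),(\nu_1,\nu_3)}$, apply Theorem~\ref{thm:BigHermitianFormPositiveSemidefinite} for $\GL(2n-2,\RR)$, and invoke Gelfand--Naimark irreducibility of unitary parabolic induction for the second part. Your explicit check that $w_H(\eta,-\nu)=(\eta,\bar\nu)$ and that $\nu_1-\nu_2,\nu_2-\nu_3\notin\ZZ$ is correct and fills in details the paper leaves implicit.

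There is one small but genuine gap: you assume $\nu_2\in i\RR\setminus\{0\}$ from the outset, but the positive-semidefiniteness assertion is stated for all $\nu_2\in i\RR$, including $\nu_2=0$. At $\nu_2=0$ the hypothesis $\nu_2-\nu_3\notin\ZZ$ of Lemma~\ref{lemma:FactorizationOfS} can actually fail (e.g.\ $\nu_1=-\nu_3=1$ gives $\nu_2-\nu_3=1$), so the factorization is not available there. The paper closes this case by a continuity argument, letting $\nu_2\to0$; since the form $\langle\cdot,\cdot\rangle_{\eta,\nu}$ depends continuously on $\nu_2$ and the limit of positive semidefinite forms is positive semidefinite, the first claim extends. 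You should add this limit step; the remaining claims of the theorem only concern $\nu_2\neq0$, so they are unaffected.
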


\begin{proof}
    Assume first that $\nu_2\in i\RR\setminus\{0\}$. By the previous lemma, in particular \eqref{eq:FactorizationNormalized}, we can conclude that $\langle f_1,f_2\rangle_{\eta,\nu}$ is given by
    $$ \int_{H/Q_H} \Big(\mathbf{U}_{\eta,\nu}f_1(hQ_H)\,\Big|\,\overline{\big(\mathbf{R}_{(\eta_1,\eta_3),(\nu_1,\nu_3)}\otimes\id_{\chi_{\eta_2,\nu_2}}\big)\mathbf{U}_{\eta,\nu}f_2(hQ_H)}\Big)_{\GL(2n-2,\RR)}\,d(hQ_H), $$
    where $(\cdot\,|\,\cdot)_{\GL(2n-2,\RR)}$ denotes the invariant form on $\varpi_{(\eta_1,\eta_3),(\nu_1,\nu_3)}\times\varpi_{(\eta_1,\eta_3),(-\nu_1,-\nu_3)}$. Now the result follows immediately from Theorem~\ref{thm:BigHermitianFormPositiveSemidefinite}. The case $\nu_2=0$ is a simple limit argument by $\nu_2\to0$.

    The second part follows from the old result of Gelfand and Naimark that unitary parabolic induction for general linear groups preserves irreducibility (see e.g. \cite[Proposition 1.1]{Spe81}) and the fact that the unitarizable quotients in Theorem~\ref{thm:BigHermitianFormPositiveSemidefinite} are irreducible (applied to $\GL(2n-2,\RR)$ instead of $\GL(2n,\RR)$).
\end{proof}

\begin{remark}
It might be possible to extend the previous result to the case $\nu_2=0$. However, since the representations all show up as part of a direct integral over $\nu_2\in i\RR$ and $\{0\}$ has measure zero in $i\RR$, we can simply neglect the case $\nu_2=0$ to obtain a decomposition into irreducible representations.
\end{remark}

\begin{figure}\label{fig:UnirrepsH}
    \centering
    \begin{tikzpicture}

    % Define colors
    \definecolor{imaginaryblue}{RGB}{70,130,180}
    \definecolor{realred}{RGB}{220,20,60}
    \definecolor{dotgreen}{RGB}{34,139,34}
    \definecolor{dotorange}{RGB}{255,140,0}
    \definecolor{dotpurple}{RGB}{75,0,130}

    \fill[realred,fill opacity=0.5] (-1, -2) rectangle (1, 2);
    \draw[thick,imaginaryblue] (0,-2) -- (0,2);
    \foreach \x in {1,3,5} {
        \draw[thick,dotpurple] (-\x,-2) -- (-\x,2);
    }
    \foreach \x in {1,2,3,4} {
        \draw[thick,dotgreen] (\x,-2) -- (\x,2);
    }
    \draw[thick,dotgreen] (5.5,-2) -- (5.5,2);
    \node at (4.7,0) {\dots};
    \node at (-5.7,0) {\dots};

    \draw[thick,dotorange] (6.5,-2) -- (6.5, 2);
    
    \fill[imaginaryblue] (-5,-2.47) circle (2pt);
    \node[right] at (-5,-2.5) {\tiny Unitary degenerate series};
     \fill[realred] (-5,-2.77) circle(2pt);
     \node[right] at (-5,-2.8) {\tiny Complementary series};
    \fill[dotgreen] (-1,-2.47) circle(2pt);
     \node[right] at (-1,-2.5) {\tiny Small irreducible quotients};
     \fill[dotpurple] (-1,-2.77) circle(2pt);
     \node[right] at (-1,-2.8) {\tiny Large irreducible quotients};
     \fill[dotorange] (3,-2.47) circle(2pt);
     \node[right] at (3,-2.5) {\tiny Unitary characters};
\end{tikzpicture}
    \caption{The unitary representations related to $\tau_{\eta,\nu}$ plotted with $\nu_1-\nu_3 \in \RR$ on the first axis and $\nu_2\in i\RR$ on the second axis.} 
\end{figure}
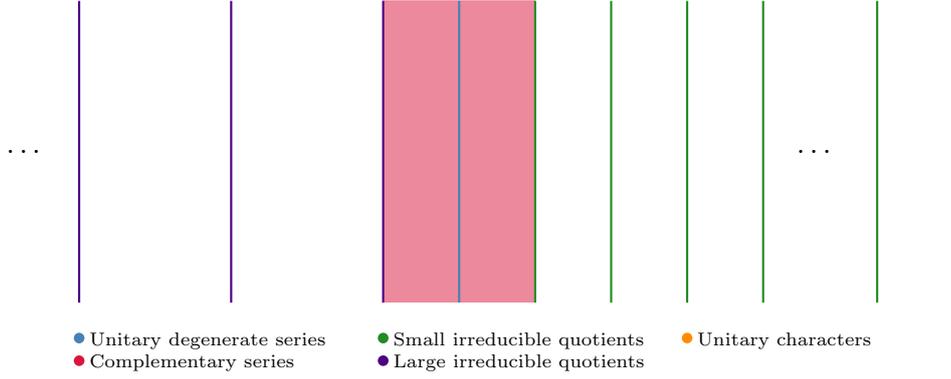

By the previous theorem, we obtain the following list of irreducible unitary representations of $H$ for $\eta_1=\eta_3$ together with isomorphisms to parabolically induced representations from $Q_H$:
    \begin{enumerate}
        \item \emph{Unitary degenerate series} $\tau_{\eta,\nu}$ for $\nu\in i\RR^3$:
        $$ \tau_{\eta,\nu} \simeq \Ind_{Q_H}^H(\varpi_{(\eta_1,\eta_3),(\nu_1,\nu_3)}\otimes\chi_{\eta_2,\nu_2}). $$
        \item \emph{Complementary series} $\tau_{\eta,\nu}^{\mathrm{c.s.}}=\tau_{\eta,\nu}$ for  $\nu_2\in i\RR$ and $\nu_1=-\nu_3\in(-\frac{1}{2},0)$:
        $$ \tau_{\eta,\nu}^{\textrm{c.s.}} \simeq \Ind_{Q_H}^H(\varpi_{(\eta_1,\eta_3),(\nu_1,\nu_3)}^{\textrm{c.s.}}\otimes\chi_{\eta_2,\nu_2}). $$
        \item \emph{Small irreducible quotients} $\tau_{\eta,\nu}^{\operatorname{small}}=\tau_{\eta,\nu}/\ker(\mathbf{S}_{\eta,\nu})$ for $\nu_2\in i\RR$ and $\nu_1=-\nu_3\in \{\frac{1}{2},1,\ldots,\frac{n-1}{2}\}$:
        $$ \tau_{\eta,\nu}^{\textrm{small}} \simeq \Ind_{Q_H}^H(\varpi_{(\eta_1,\eta_3),(\nu_1,\nu_3)}^{\textrm{small}}\otimes\chi_{\eta_2,\nu_2}). $$
        \item \emph{Large irreducible quotients} $\tau_{\eta,\nu}^{\operatorname{large}}=\tau_{\eta,\nu}/\ker(\mathbf{S}_{\eta,\nu})$ for $\nu_2\in i\RR$ and $\nu_1=-\nu_3\in \{-\frac{1}{2},-\frac{3}{2},-\frac{5}{2},\ldots\}$:
        $$ \tau_{\eta,\nu}^{\textrm{large}} \simeq \Ind_{Q_H}^H(\varpi_{(\eta_1,\eta_3),(\nu_1,\nu_3)}^{\textrm{Speh}}\otimes\chi_{\eta_2,\nu_2}). $$
    \end{enumerate}

For $\nu_2\in i\RR$ and $\nu_1=-\nu_3=\frac{n}{2}$ the quotients are unitary characters:
$$ \tau_{\eta,\nu}/\ker(\mathbf{S}_{\eta,\nu})\simeq|\det|_{\eta_2}^{\nu_2}. $$

\section{Symmetry breaking operators}

We construct and analyze two different families of symmetry breaking operators between the degenerate series representations $\pi_{\xi,\lambda}$ of $G$ and $\tau_{\eta,\nu}$ of $H$. These families are related to each other by two functional equations involving the Knapp--Stein operators $\mathbf{T}_{\xi,\lambda}$ and $\mathbf{S}_{\eta,\nu}$. To prove the functional equations, we first show an integral formula for $G/P_G$ in terms of $H/P_H$.

\subsection{First type of symmetry breaking operators} \label{subsec:A-SBO}

In \cite{DF24} the authors studied a meromorphic family of intertwining operators
$$ A_{\xi',\lambda'}^{\eta',\nu'}\in \Hom_H(\pi_{\xi',\lambda'}^{\ps}|_H,\tau_{\eta',\nu'}^{\ps}),$$
which can be described in terms of their distributional kernels $K_{\xi',\lambda'}^{\eta',\nu'}\in \calD'(G)$ by 
\[
 A_{\xi',\lambda'}^{\eta',\nu'}f(h)=\int_{G/P_{2n}}K_{\xi',\lambda'}^{\eta',\nu'}(h^{-1}g)f(g)\,d(gP_{2n}).
\]
The kernels are given by
$$ K_{\xi',\lambda'}^{\eta',\nu'}(x) = |\Phi_{2n}(x)|^{\lambda_{2n}'+\frac{2n-1}{2}}_{\xi_{2n}'}\prod_{j=1}^{2n-1}|\Phi_j(x)|_{\xi_j'+\eta_{2n-j}'}^{\lambda_j'-\nu_{2n-j}'-\frac{1}{2}}|\Psi_j(x)|_{\eta_{2n-j}'+\xi_{j+1}'}^{\nu_{2n-j}'-\lambda_{j+1}'-\frac{1}{2}}, $$
where
\[
\Phi_k(g)=\det(g_{ij})_{2n+1-k\leq i\leq 2n, 1\leq j\leq k}, \qquad \&\qquad \Psi_\ell(g)=\det(g_{ij})_{2n-\ell\leq i \leq 2n-1,1\leq j\leq \ell}.
\]

By Lemma \ref{lem:degseriesInsidePrincipalSeries} we can for $(\xi,\lambda,\eta,\nu)\in (\ZZ/2\ZZ)^2\times \CC^2\times (\ZZ/2\ZZ)^3\times \CC^3$ pick $(\xi',\lambda',\eta',\nu')\in (\ZZ/2\ZZ)^{2n}\times \CC^{2n}\times (\ZZ/2\ZZ)^{2n-1}\times \CC^{2n-1}$ such that $\pi_{\xi,\lambda}$ is a quotient of $\pi_{\xi',\lambda'}^{\ps}$ and $\tau_{\eta',\nu'}^{\ps}$ contains $\tau_{(\xi_2,\eta,\xi_1),(\lambda_2,\nu,\lambda_1)}$ as a subrepresentation. Then the kernel is of the form
\[ K_{\xi',\lambda'}^{\eta',\nu'}(g) = |\Phi_n(g)|^{\lambda_1-\nu-\frac{n}{2}}_{\xi_1+\eta}|\Psi_n(g)|_{\eta+\xi_2}^{\nu-\lambda_2-\frac{n}{2}}|\det(g)|^{\lambda_{2}+\frac{n}{2}}_{\xi_2}. \]
Abusing notation, we also write $K_{\xi,\lambda}^{\eta,\nu}(g)$ for this expression. Since this kernel is $P_G$-equivariant from the right and $P_H$-equivariant from the left, the corresponding intertwining operator $A_{\xi',\lambda'}^{\eta',\nu'}:\pi_{\xi',\lambda'}^{\ps}\to\tau_{\eta',\nu'}^{\ps}$ factorizes to an operator $A_{\xi,\lambda}^{\eta,\nu}:\pi_{\xi,\lambda}\to\tau_{(\xi_2,\eta,\xi_1),(\lambda_2,\nu,\lambda_1)}$ via the following diagram:
\[\begin{tikzcd}
	{\pi_{\xi',\lambda'}^{\ps}} & {\tau_{\eta',\nu'}^{\ps}} \\
	{\pi_{\xi,\lambda}} & {\tau_{(\xi_2,\eta,\xi_1),(\lambda_2,\nu,\lambda_1)}}.
	\arrow["{A_{\xi',\lambda'}^{\eta',\nu'}}", from=1-1, to=1-2]
	\arrow["{\Pi_{\xi',\lambda'}}"', two heads, from=1-1, to=2-1]
	\arrow["{A_{\xi,\lambda}^{\eta,\nu}}"', from=2-1, to=2-2]
	\arrow["{\iota_{\eta',\nu'}}"', hook, from=2-2, to=1-2]
\end{tikzcd}
\]
It follows that $A_{\xi,\lambda}^{\eta,\nu}$ is given by
\[
A_{\xi,\lambda}^{\eta,\nu}f(h)=(-1)^{(n+1)(\eta+\xi_2)}\int_{G/P_G}K_{\xi,\lambda}^{\eta,\nu}(h^{-1}g)f(g)\,d(gP_G),
\]
where we have added the sign such that in the non-compact $\overline{N}_G$-picture we have that
\[
A_{\xi,\lambda}^{\eta,\nu}f(h)=\int_{\R^{n\times n}}|\det(X)|^{\lambda_1-\nu-\frac{n}{2}}_{\xi_1+\eta}|\operatorname{det}_{n-1}(X)|_{\eta+\xi_2}^{\nu-\lambda_2-\frac{n}{2}}f(h\begin{pmatrix}
    I_n & \\
    X & I_n
\end{pmatrix})\,dX,
\]
where $\operatorname{det}_{n-1}(X)=\det(X_{ij})_{1\leq i,j\leq n-1}$. This integral converges when  $\Re(\lambda_1-\nu)>\frac{n-2}{2}$ and $\Re(\nu-\lambda_2)>\frac{n-2}{2}$ by \cite[Proposition 2.1]{DF24}.
Consider the normalization 
\[
n_\mathbf{A}(\xi,\lambda,\eta,\nu)=\Gamma(\tfrac{\lambda_1-\nu+1-\frac{n}{2}+[\xi_1+\eta]}{2})\Gamma(\tfrac{\nu-\lambda_2+1-\frac{n}{2}+[\xi_2+\eta]}{2})\prod_{i=2}^n\Gamma(\tfrac{\lambda_1-\lambda_2+i-n+[\xi_1+\xi_2]}{2})
\]
and set $\mathbf{A}_{\xi,\lambda}^{\eta,\nu}=n_\mathbf{A}(\xi,\lambda,\eta,\nu)^{-1}A_{\xi,\lambda}^{\eta,\nu}.$

\begin{proposition} \label{prop:A-SBOholomorphic}
The family $\mathbf{A}_{\xi,\lambda}^{\eta,\nu}$ depends holomorphically on $(\lambda ,\nu )\in \CC^2\times \CC$ for all $(\xi ,\eta )\in (\ZZ/2\ZZ)^2\times (\ZZ/2\ZZ)$.
\end{proposition}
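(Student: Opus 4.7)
The plan is to leverage the commutative diagram at the end of Section~\ref{subsec:A-SBO}, which realises $A_{\xi,\lambda}^{\eta,\nu}$ as a descent of the principal series symmetry breaking operator $A_{\xi',\lambda'}^{\eta',\nu'}$ whose meromorphic continuation has already been carried out in \cite{DF24}. The proof then comes in three stages: (i) transfer holomorphy through the diagram, (ii) import the normalised principal series result, and (iii) match normalisations.

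For (i), I would pick $(\xi',\lambda',\eta',\nu')$ as in Lemma~\ref{lem:degseriesInsidePrincipalSeries} so that the identity $\iota_{\eta,\nu}\circ A_{\xi,\lambda}^{\eta,\nu}\circ\Pi_{\xi',\lambda'}=A_{\xi',\lambda'}^{\eta',\nu'}$ from the diagram holds. Both $\Pi_{\xi',\lambda'}$ and $\iota_{\eta,\nu}$ are holomorphic (indeed entire) in $(\lambda,\nu)$: $\Pi_{\xi',\lambda'}$ is defined by integration over the compact manifold $G_n/P_n\times G_n/P_n$ against an integrand that is entire in $\lambda$, while $\iota_{\eta,\nu}$ is simply the inclusion of functions. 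Since $\Pi_{\xi',\lambda'}$ is surjective and admits locally holomorphic sections (e.g.\ by averaging against a bump function on $G$), the holomorphy of any normalisation of $A_{\xi,\lambda}^{\eta,\nu}$ can be read off from that of the same normalisation applied to $A_{\xi',\lambda'}^{\eta',\nu'}$.

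For (ii) and (iii), I would cite the meromorphic continuation result from \cite{DF24}, which provides an explicit product of Gamma functions $N_{\mathbf{A}'}(\xi',\lambda',\eta',\nu')$ such that $N_{\mathbf{A}'}^{-1}A_{\xi',\lambda'}^{\eta',\nu'}$ extends holomorphically everywhere. Combined with step (i), this already yields holomorphy of $N_{\mathbf{A}'}^{-1}A_{\xi,\lambda}^{\eta,\nu}$ after the specialisation from Lemma~\ref{lem:degseriesInsidePrincipalSeries}. It then remains to verify that $N_{\mathbf{A}'}/n_{\mathbf{A}}$ (at the specialised parameters) is a nowhere-vanishing entire function. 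Under the specialisation $\lambda_j'=\lambda_1+\tfrac{n-2j+1}{2}$, $\lambda_{n+j}'=\lambda_2+\tfrac{n-2j+1}{2}$ and the analogous substitutions for $\nu'$, the differences $\lambda_i'-\lambda_{n+j}'$, $\lambda_i'-\nu_j'$, and $\nu_j'-\lambda_{n+i}'$ collapse onto three arithmetic progressions governed respectively by $\lambda_1-\lambda_2$, $\lambda_1-\nu$, and $\nu-\lambda_2$. After an application of the Gauss multiplication formula, the bulk of the $O(n^2)$ Gamma factors in $N_{\mathbf{A}'}$ cancels in pairs, leaving exactly the three Gamma-factor blocks appearing in $n_{\mathbf{A}}$ up to a nonvanishing entire prefactor.

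The main obstacle is the combinatorial matching in the last step. Because $N_{\mathbf{A}'}$ is much larger than $n_{\mathbf{A}}$, most factors must cancel, and the parity contributions $[\xi_i'+\eta_j']$ need to be tracked in parallel with the half-integer shifts coming from the specialisation formulas. This bookkeeping is routine once the right grouping of Gamma factors is identified, so no new ideas beyond standard manipulations of $\Gamma$ are required.
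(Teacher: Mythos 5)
Your route---reducing to the principal-series operators of \cite{DF24} through the commuting square in Section~\ref{subsec:A-SBO} and then matching normalizations---is genuinely different from the paper's. The paper does not touch the principal-series operators again at this stage; it instead proves the two functional equations of Proposition~\ref{Prop: functional equations}, obtains holomorphy of $\Gamma(\frac{\lambda_2-\lambda_1+n+[\xi_1+\xi_2]}{2})^{-1}\mathbf{A}_{\xi,\lambda}^{\eta,\nu}$ from \eqref{eq:Functionaleq: BT=A} (using Propositions~\ref{prop:storTonyholomorphic} and \ref{prop:B-SBOholomorphic}) and of $n_{\mathbf{B}}^{-1}\mathbf{A}_{\xi,\lambda}^{\eta,\nu}$ from \eqref{eq:Functionaleq: TB=A} (using Propositions~\ref{prop:lilleTonyholomorphic} and \ref{prop:B-SBOholomorphic}), and then applies Hartogs' theorem on the codimension-two intersection of the two correction factors' pole sets. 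So the comparison is: you import one big external fact ([DF24]'s normalization) and pay with $\Gamma$-combinatorics, whereas the paper proves two functional equations it needs anyway (they drive the analytic continuation in Section~\ref{sec:DecomposingUnitaryRepresentations}) and pays only with Hartogs.

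The gap in your proposal is in stage (iii), and it is more than bookkeeping. Your argument gives holomorphy of $N_{\mathbf{A}'}^{-1}A_{\xi,\lambda}^{\eta,\nu}$ along the specialization slice; to upgrade this to holomorphy of $n_{\mathbf{A}}^{-1}A_{\xi,\lambda}^{\eta,\nu}$ you need the ratio $N_{\mathbf{A}'}/n_{\mathbf{A}}$, restricted to the slice, to be holomorphic. Gamma functions are nowhere vanishing, so a quotient of $\Gamma$-products is holomorphic only when every pole of the numerator is matched by a pole of the denominator; it is not a cancellation ``in pairs'' but a divisibility condition on pole divisors. If even one $\Gamma$-factor of $N_{\mathbf{A}'}$ restricts to a $\Gamma$ with a genuinely variable argument that has no counterpart in $n_{\mathbf{A}}$, the ratio acquires poles and the transfer fails (unless you additionally prove that the holomorphic operator $N_{\mathbf{A}'}^{-1}A$ vanishes on those loci---a separate analytic input, not combinatorics). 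What actually happens is delicate: most exponents in the specialized kernel become constants and the corresponding $\Gamma$'s degenerate to scalars, but the $(n-1)$-fold product $\prod_{i=2}^n\Gamma(\frac{\lambda_1-\lambda_2+i-n+[\xi_1+\xi_2]}{2})$ in $n_{\mathbf{A}}$ is not visible in the specialized kernel and must emerge from $N_{\mathbf{A}'}$ in exactly this form. You assert this as routine; it is precisely the content that needs proof, and it is also precisely the step the paper circumvents because the $\prod_{i=2}^n$ block in $n_{\mathbf{A}}$ is inherited directly from $n_{\mathbf{T}}$ through the functional equation. A secondary, more minor point: the existence of a holomorphic (in $\lambda$) family of sections of $\Pi_{\xi',\lambda'}$ is plausible but is stated here by gesture; in the compact picture the underlying vector spaces are fixed, and you should exhibit the section explicitly before relying on it.
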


We postpone the proof of this until Section \ref{sec:relating the two types of intertwining operators}. Until then we only use that $\mathbf{A}_{\xi,\lambda}^{\eta,\nu}$ depends meromorphically on $(\lambda,\nu)\in\CC^2\times\CC$.

\subsection{Second type of symmetry breaking operators}\label{subsec:B-SBO}

For $\lambda\in \CC^2$, $\xi\in (\ZZ/2\ZZ)^2$, $\nu \in \CC$ and $\eta\in \ZZ/2\ZZ$ consider the operator $B_{\xi ,\lambda }^{\eta ,\nu }:\pi _{\xi ,\lambda }\to \tau _{(\xi_1 ,\eta ,\xi_2 ),(\lambda_1 ,\nu ,\lambda_2 )}$ given by the integral
\[
B_{\xi ,\lambda }^{\eta ,\nu }f(h)=\int_{\RR}|t|_{\xi_1+\eta}^{\lambda_1 -\nu +\frac{n}{2}}f(h\overline{n}_{2n,n}(t))\frac{dt}{|t|}
\]
where $\overline{n}_{2n,n}(t)=I_{2n}+tE_{2n,n}=e^{tE_{2n,n}}$. It is straightforward computation to show that $B_{\xi ,\lambda }^{\eta ,\nu }$ indeed maps into $\tau _{(\xi_1 ,\eta ,\xi_2 ),(\lambda_1 ,\nu ,\lambda_2 )}$, and it intertwines the left action of $H$ by definition, whenever it converges. Consider the normalization
\[
n_{\mathbf{B}}(\xi ,\lambda ,\eta ,\nu )=\Gamma(\tfrac{\lambda_1 -\nu +\frac{n}{2}+[\xi_1+\eta]}{2})\Gamma(\tfrac{\nu -\lambda_2 +\frac{n}{2}+[\xi_2+\eta]}{2}),
\]
and set $\mathbf{B}_{\xi ,\lambda }^{\eta ,\nu }=n_{\mathbf{B}}(\xi ,\lambda ,\eta ,\nu )^{-1}B_{\xi ,\lambda }^{\eta ,\nu }$.

\begin{proposition} \label{prop:B-SBOholomorphic}
The integral defining $B_{\xi,\lambda}^{\eta,\nu}$ converges when $\Re(\lambda_1-\nu)>-\frac{n}{2}$ and $\Re(\nu-\lambda_2)>-\frac{n}{2}$. The family $\mathbf{B}_{\xi ,\lambda }^{\eta ,\nu }$ can be extended analytically such that it depends holomorphically on $(\lambda ,\nu )\in \CC^2\times \CC$ for all $(\xi ,\eta )\in (\ZZ/2\ZZ)^2\times (\ZZ/2\ZZ)$, and it defines a family of intertwining operators in $\Hom_H(\pi_{\xi,\lambda}|_H,\tau_{(\xi_1,\eta,\xi_2),(\lambda_1,\nu,\lambda_2)})$.
\end{proposition}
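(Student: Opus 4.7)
My plan is to split the integral at $|t|=1$ and recognize each piece as a one-dimensional Riesz distribution paired with a smooth function. Recall that the distribution $|t|^{\beta-1}_{\epsilon}$ on $\RR$ converges absolutely against compactly supported smooth functions for $\Re(\beta)>0$ and extends meromorphically in $\beta$ with simple poles at $\beta+[\epsilon]\in-2\NN$, which are exactly the poles of $\Gamma\bigl(\tfrac{\beta+[\epsilon]}{2}\bigr)$. For $|t|\le 1$, the function $t\mapsto f(h\overline n_{2n,n}(t))$ is smooth at $t=0$, so the corresponding piece of the integral is such a Riesz distribution with $\beta=\lambda_1-\nu+n/2$ and $\epsilon=\xi_1+\eta$: it converges for $\Re(\lambda_1-\nu)>-n/2$ and its meromorphic extension has poles absorbed by $\Gamma\bigl(\tfrac{\lambda_1-\nu+n/2+[\xi_1+\eta]}{2}\bigr)$.

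For $|t|\ge 1$ the key step is to extract the polynomial growth of $f(h\overline n_{2n,n}(t))$ as $|t|\to\infty$ via the $\mathrm{SL}_2$-Bruhat factorization along the root $e_{2n}-e_n$: for $t\neq 0$,
\[
\overline n_{2n,n}(t)=n_+(1/t)\cdot\tilde w\cdot\tilde m(t)\cdot n_+(1/t),
\]
where $n_+(s):=\exp(sE_{n,2n})\in N_G$, $\tilde w\in K_{2n}$ is a representative swapping positions $n\leftrightarrow 2n$, and $\tilde m(t)\in M_GA_G$ has $-t$ at position $n$ and $-1/t$ at position $2n$ (obtained by conjugating the diagonal factor in the standard $\mathrm{SL}_2$ decomposition past $\tilde w$). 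Killing the trailing $n_+(1/t)$ by $N_G$-invariance and applying the $P_G$-equivariance of $f$ to $\tilde m(t)$ using $\rho_G=(n/2,-n/2)$ yields
\[
f(h\overline n_{2n,n}(t))=(-\sgn t)^{\xi_1+\xi_2}|t|^{\lambda_2-\lambda_1-n}\,f\bigl(h\,n_+(1/t)\tilde w\bigr),
\]
where the last factor is smooth in $1/t$ and bounded for $|t|\ge 1$. Substituting $s=1/t$ converts the $|t|\ge 1$ piece into
\[
(-1)^{\xi_1+\xi_2}\int_{|s|\le 1}|s|^{\nu-\lambda_2+n/2-1}_{\xi_2+\eta}\,f\bigl(h\,n_+(s)\tilde w\bigr)\,ds,
\]
another Riesz distribution against a smooth function; it converges for $\Re(\nu-\lambda_2)>-n/2$ and its poles are absorbed by $\Gamma\bigl(\tfrac{\nu-\lambda_2+n/2+[\xi_2+\eta]}{2}\bigr)$.

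Combining both pieces yields the claimed convergence region $\Re(\lambda_1-\nu),\Re(\nu-\lambda_2)>-n/2$, and dividing by $n_{\mathbf B}(\xi,\lambda,\eta,\nu)$ cancels all poles at both ends, so $\mathbf B_{\xi,\lambda}^{\eta,\nu}$ extends holomorphically to $(\lambda,\nu)\in\CC^2\times\CC$. The $H$-intertwining property was already noted to hold in the range of absolute convergence (just above the proposition statement) and propagates to all parameters by analytic continuation. I expect the main technical obstacle to be the bookkeeping of the $\mathrm{SL}_2$-factorization at infinity: one must check that after pulling $\tilde w$ past the diagonal factor the entries at positions $n$ and $2n$ have indeed swapped, so that the $P_G$-character $\xi\otimes e^{-\lambda-\rho_G}$ on $\tilde m(t)$ produces the clean exponent $|t|^{\lambda_2-\lambda_1-n}$, and that after the change of variables $s=1/t$ the resulting Riesz exponent depends only on $\nu-\lambda_2$ rather than on both $\lambda_1,\lambda_2$, matching the statement and the structure of $n_{\mathbf B}$.
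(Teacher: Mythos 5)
Your proof is correct and is essentially the same argument the paper uses: in both cases the integral is recognized as a pairing of a $1$-dimensional Riesz distribution $|x|^\mu_\varepsilon/\Gamma(\tfrac{\mu+1+[\varepsilon]}{2})$ against a smooth function, with one Riesz factor governing the singularity at $t=0$ and the other the singularity at $t=\infty$, and the two $\Gamma$-factors in $n_{\mathbf B}$ absorb precisely these poles. The only difference is cosmetic: the paper passes to compact $K$-coordinates via $t=\tan\theta$ and treats the two singular regions as $\theta=0$ and $\theta=\pm\tfrac{\pi}{2}$ (reading off the second one from the alternate expression after shifting by $\pi$), whereas you stay in the noncompact picture, split at $|t|=1$, and handle $t=\infty$ by the $\SL(2)$-Bruhat factorization $\overline n_{2n,n}(t)=n_+(1/t)\,\tilde w\,\tilde m(t)\,n_+(1/t)$ followed by the change of variables $s=1/t$. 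Your exponent bookkeeping is right: with the convention that $a^\lambda=|\det g_1|^{\lambda_1}|\det g_2|^{\lambda_2}$ for $a=\diag(g_1,g_2)\in A_G$ (the same convention that makes the stated formula for $T_{\xi,\lambda}$ come out with exponent $\lambda_1-\lambda_2-n$), the $MA$-part $\tilde m(t)$ indeed produces the factor $(-\sgn t)^{\xi_1+\xi_2}|t|^{\lambda_2-\lambda_1-n}$, and after $s=1/t$ the $|t|\ge1$ piece becomes $(-1)^{\xi_1+\xi_2}\int_{|s|\le1}|s|^{\nu-\lambda_2+n/2-1}_{\xi_2+\eta}f(h\,n_+(s)\tilde w)\,ds$, matching the second $\Gamma$-factor. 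Your handling of the intertwining property by analytic continuation from the convergent range is also what the paper does.

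Incidentally, note that the paper's displayed second expression $\int_0^\pi(\sin\theta)^{\lambda_1-\nu+\frac n2-1}|\cos\theta|_{\eta+\xi_2}^{\lambda_2-\nu+\frac n2-1}f(h\iota(k_\theta))\,d\theta$ has a sign typo in the exponent on $|\cos\theta|$; it should read $\nu-\lambda_2+\frac n2-1$, consistent with your computation and with the normalization $n_{\mathbf B}$.
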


\begin{proof}
We change the integral to compact coordinates. The integral of $B_{\xi,\lambda}^{\eta,\nu}$ can be reduced to a $\SL(2,\RR)$-computation by the embedding $\iota$ of $\SL(2,\RR)$ inside $G$ at the entries $g_{2n,n}$, $g_{n,n}$, $g_{n,2n}$ and $g_{2n,2n}$. Decomposing 
\[
\begin{pmatrix}
    1 &\\
    t & 1
\end{pmatrix}=\frac{1}{\sqrt{1+t^2}}\begin{pmatrix}
    1 & -t\\
    t & 1
\end{pmatrix}\begin{pmatrix}
    \sqrt{1+t^2} & \frac{t}{\sqrt{1+t^2}}\\
    0 & \frac{1}{\sqrt{1+t^2}}
\end{pmatrix},
\]
using the equivariance of $f$ and the change of variables $t=\tan\theta$ we get 
\begin{equation}\label{eq:BinKcoordinates}
    B_{\xi,\lambda}^{\eta,\nu}f(h)=\int_{-\frac{\pi}{2}}^{\frac{\pi}{2}}|\sin\theta|^{\lambda_1-\nu+\frac{n}{2}-1}_{\xi_1+\eta}(\cos\theta)^{\nu-\lambda_2+\frac{n}{2}-1} f(h\iota(k_\theta))\,d\theta,
\end{equation}
where 
\[
k_\theta=\begin{pmatrix}
    \cos\theta & -\sin\theta\\
    \sin\theta &\cos \theta
\end{pmatrix}.
\]
Changing variables of the integral $-\frac{\pi}{2}\leq \theta\leq 0$ by $\theta\to \theta+\pi$ we get an alternate expression
\[
B_{\xi,\lambda}^{\eta,\nu}f(h)=\int_{0}^\pi (\sin\theta)^{\lambda_1-\nu+\frac{n}{2}-1}|\cos\theta|_{\eta+\xi_2}^{\lambda_2-\nu+\frac{n}{2}-1}f(h\iota(k_\theta))\,d\theta.
\]
From classical results about the Riesz distribution (see e.g. \cite{GS64}) we have that $|x|^\mu_\varepsilon/\Gamma(\frac{\mu+1+[\varepsilon]}{2})$ is locally $L^1$ when $\Re(\mu)>-1$ and it extends holomorphically to all $\mu\in\CC$.  By the substitution $x=\sin\theta$ and $x=\cos\theta$ in our two expressions we see that $B_{\xi,\lambda}^{\eta,\nu}$ indeed converges for those parameters and the normalization $n_\mathbf{B}(\xi,\lambda,\eta,\nu)$ indeed makes $\mathbf{B}_{\xi,\lambda}^{\eta,\nu}$ holomorphic.
\end{proof}

For later purpose, we prove a statement about the vanishing of $\mathbf{B}_{\xi,\lambda}^{\eta,\nu}$.

\begin{proposition}\label{prop:Bvanishing}
    If both $\lambda_1-\nu+\frac{n}{2}+[\xi_1+\eta]\in-2\NN$ and $\nu-\lambda_2+\frac{n}{2}+[\xi_2+\eta]\in-2\NN$, then $\mathbf{B}_{\xi,\lambda}^{\eta,\nu}=0$.
\end{proposition}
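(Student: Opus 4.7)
The plan is to exploit the product structure of the normalization $n_{\mathbf B}=\Gamma_1\Gamma_2$ with $\Gamma_1=\Gamma(\tfrac{\lambda_1-\nu+\frac{n}{2}+[\xi_1+\eta]}{2})$ and $\Gamma_2=\Gamma(\tfrac{\nu-\lambda_2+\frac{n}{2}+[\xi_2+\eta]}{2})$: I will show that on the codimension-one slice carved out by the first hypothesis, the partially renormalized operator $B_{\xi,\lambda}^{\eta,\nu}/\Gamma_1$ is regular (in fact polynomial) in the remaining parameter $\nu$, after which the second hypothesis forces $\mathbf{B}_{\xi,\lambda}^{\eta,\nu}=(B_{\xi,\lambda}^{\eta,\nu}/\Gamma_1)/\Gamma_2$ to vanish since $\Gamma_2$ has a simple pole there.

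Starting from the compact-coordinate representation \eqref{eq:BinKcoordinates}, the substitution $x=\sin\theta$ yields
\[
B_{\xi,\lambda}^{\eta,\nu}f(h) = \int_{-1}^{1} |x|^{\lambda_1-\nu+\frac{n}{2}-1}_{\xi_1+\eta}(1-x^2)^{(\nu-\lambda_2+\frac{n}{2}-2)/2}f\bigl(h\iota(k_{\arcsin x})\bigr)\,dx
\]
in the region of absolute convergence. The key structural observation is that the Riesz-type singular factor $|x|^\cdot_{\xi_1+\eta}$ depends only on $\lambda_1-\nu$, while the second factor $(1-x^2)^{(\nu-\lambda_2+\frac{n}{2}-2)/2}$ is analytic at $x=0$ with Taylor coefficients that are polynomial in $\nu-\lambda_2$.

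Dividing by $\Gamma_1$, the classical Riesz theory asserts that $|x|^\mu_\varepsilon/\Gamma(\tfrac{\mu+1+[\varepsilon]}{2})$ extends to an entire family of distributions, equal at $\mu=-1-[\varepsilon]-2k$ to a nonzero constant multiple of $\delta^{(2k+[\varepsilon])}(x)$. Restricted to the slice defined by the first hypothesis $\lambda_1-\nu+\frac{n}{2}+[\xi_1+\eta]=-2k_1$, the integrand thus becomes a distribution supported at $x=0$, and evaluating the delta-derivative against the smooth factor produces a closed formula that is \emph{polynomial in $\nu$}: the Taylor coefficients of $(1-x^2)^{(\nu-\lambda_2+\frac{n}{2}-2)/2}$ at $x=0$ are polynomial in $\nu-\lambda_2$, while the $x$-derivatives of $f(h\iota(k_{\arcsin x}))$ at $x=0$ do not involve $\nu$. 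By the uniqueness of meromorphic continuation, this polynomial identity for $B_{\xi,\lambda}^{\eta,\nu}/\Gamma_1$ on the slice, derived in the convergence region, persists for all $\nu\in\CC$.

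To conclude, write $\mathbf{B}_{\xi,\lambda}^{\eta,\nu}=(B_{\xi,\lambda}^{\eta,\nu}/\Gamma_1)/\Gamma_2$, which is entire in $(\lambda,\nu)$ by Proposition~\ref{prop:B-SBOholomorphic}. At the target point both hypotheses hold simultaneously: the numerator is finite by the previous paragraph, while $\Gamma_2$ has a simple pole there. Therefore $\mathbf{B}_{\xi,\lambda}^{\eta,\nu}$ equals a finite value divided by infinity, which is $0$.

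The main technical hurdle is the justification that the formal delta-evaluation at the first pole really agrees with the analytically continued function $B_{\xi,\lambda}^{\eta,\nu}/\Gamma_1$ on the slice; this requires identifying two meromorphic extensions from their overlap on the convergence region. The decisive point is that the smooth factor $(1-x^2)^{(\nu-\lambda_2+\frac{n}{2}-2)/2}$ is entire in $\nu$ near $x=0$, so no compensating pole in $\nu$ can be generated by that factor when one takes the residue in the Riesz variable --- which is precisely why the regularity in $\nu$ on the slice is rigid enough to force the vanishing.
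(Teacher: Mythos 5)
Your argument is correct and follows essentially the same route as the paper's own proof: both substitute $x=\sin\theta$ in \eqref{eq:BinKcoordinates}, identify $|x|^{\mu}_\varepsilon/\Gamma(\tfrac{\mu+1+[\varepsilon]}{2})$ at the first pole with a derivative of $\delta$ supported at $x=0$, observe that the resulting expression is regular in $\nu$, and then conclude that $\mathbf{B}_{\xi,\lambda}^{\eta,\nu}$ vanishes because the remaining factor $\Gamma(\tfrac{\nu-\lambda_2+\frac{n}{2}+[\xi_2+\eta]}{2})^{-1}$ is zero at the second hypothesis. The extra care you take in justifying the agreement of the delta-evaluation with the analytic continuation is implicit in the paper but not a genuinely different approach.
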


\begin{proof}
    Substituting $x=\sin\theta$ in \eqref{eq:BinKcoordinates} yields
    $$ B_{\xi,\lambda}^{\eta,\nu}f(h) = \int_{-1}^1|x|_{\xi_1+\eta}^{\lambda_1-\nu+\frac{n}{2}-1}(1-x^2)^{\frac{\nu-\lambda_2+\frac{n}{2}-2}{2}}f(h\iota(k_{\arcsin x}))\,dx. $$
    Write $\lambda_1-\nu+\frac{n}{2}=-m$ with $m\in\NN$, $m\equiv\xi_1+\eta\mod2$. Since $\Gamma(\frac{s+[\varepsilon]}{2})^{-1}|x|_\varepsilon^{s-1}$ at $s=-m$ is a non-zero multiple of $\delta^{(m)}(x)$, we find that
    $$ \textbf{B}_{\xi,\lambda}^{\eta,\nu}f(h) = \const\times\Gamma(\tfrac{\nu -\lambda_2 +\frac{n}{2}+[\xi_2+\eta]}{2})^{-1}\left.\frac{d^m}{dx^m}\right|_{x=0}\left[(1-x^2)^{\frac{\nu-\lambda_2+\frac{n}{2}-2}{2}}f(h\iota(k_{\arcsin x}))\right]. $$
    For $\nu-\lambda_2+\frac{n}{2}+[\xi_2+\eta]\in-2\NN$ we have $\Gamma(\tfrac{\nu -\lambda_2 +\frac{n}{2}+[\xi_2+\eta]}{2})^{-1}=0$ while the remaining terms are non-singular, so $\textbf{B}_{\xi,\lambda}^{\eta,\nu}=0$.
\end{proof}

\begin{remark}\label{rem:DiffOpResidues}
The previous proof reveals two different types of residues of $B_{\xi,\lambda}^{\eta,\nu}$ when $\lambda_1-\nu+\frac{n}{2}+[\xi_1+\eta],\nu-\lambda_2+\frac{n}{2}+[\eta+\xi_2]\in-2\NN$. If $\lambda_1-\nu+\frac{n}{2}=-m$ with $m\in\NN$, $m\equiv\xi_1+\eta$, we define
$$ \mathbf{C}_{\xi,\lambda}^{\eta,\nu}f(h) = \left.\frac{d^m}{dt^m}\right|_{t=0}f(h\overline{n}_{2n,n}(t))=\Big(\frac{\partial^m}{\partial g_{2n,n}^m}f\Big)(h), $$
and if $\nu-\lambda_2+\frac{n}{2}=-m$ with $m\in\NN$, $m\equiv\xi_2+\eta$, we define
$$ \mathbf{D}_{\xi,\lambda}^{\eta,\nu}f(h) = \left.\frac{d^m}{dt^m}\right|_{t=0}f(h\iota(k_{\frac{\pi}{2}})\overline{n}_{2n,n}(t))=\Big[\Big(\frac{\partial}{\partial g_{2n,n}}-\sum_{i=1}^{2n-1}g_{i,n}\frac{\partial}{\partial g_{i,n}}\Big)^mf\Big](h\iota(k_{\frac{\pi}{2}})). $$
Note that $\mathbf{C}_{\xi,\lambda}^{\eta,\nu}$ and $\mathbf{D}_{\xi,\lambda}^{\eta,\nu}$ are differential operators in the sense of \cite[Definition 2.1]{KP16}. While $\mathbf{C}_{\xi,\lambda}^{\eta,\nu}$ is a differential operator with respect to the standard embedding
$$H/P_H\hookrightarrow G/P_G,\quad hP_H\mapsto hP_G,$$
the operator $\mathbf{D}_{\xi,\lambda}^{\eta,\nu}$ is differential with respect to the embedding
$$H/P_H\hookrightarrow G/P_G,\quad hP_H\mapsto h\iota(k_{\frac{\pi}{2}})P_G.$$
We also refer to \cite[Section 2.4]{DL25} for a similar situation. From the proof of Proposition \ref{prop:Bvanishing} we get that if $\lambda_1-\nu+\frac{n}{2}=-m$ with $m\in\NN$, $m\equiv\xi_1+\eta$, then
    $$ \mathbf{B}_{\xi,\lambda}^{\eta,\nu} =\alpha(\xi,\lambda,\eta,\nu)\mathbf{C}_{\xi,\lambda}^{\eta,\nu}, $$
and if $\nu-\lambda_2+\frac{n}{2}=-m$ with $m\in\NN$, $m\equiv\xi_2+\eta$, then
    $$ \mathbf{B}_{\xi,\lambda}^{\eta,\nu} = \alpha'(\xi,\lambda,\eta,\nu)\mathbf{D}_{\xi,\lambda}^{\eta,\nu}, $$
where $\alpha$, $\alpha'$ are holomorphic functions in $(\lambda,\nu)\in \CC^2\times \CC$ for every $(\xi,\eta)\in (\ZZ/2\ZZ)^2\times (\ZZ/2\ZZ)$. However, these residues are not relevant for the unitary branching problem considered in this paper.
\end{remark}

\begin{proposition} \label{prop:B-surjective}
For $\lambda_1-\nu+\frac{n}{2}+[\xi_1+\eta],\nu-\lambda_2+\frac{n}{2}+[\xi_2+\eta]\not\in-2\NN$ the operator ${\mathbf B}_{\xi,\lambda}^{\eta,\nu}$ is surjective.
\end{proposition}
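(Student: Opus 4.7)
My approach is to construct a right-inverse to $\mathbf{B}_{\xi,\lambda}^{\eta,\nu}$ by reducing, via the compact-picture formula \eqref{eq:BinKcoordinates}, to the inversion of a one-dimensional Riesz-type distribution in the transverse coordinate $\theta\mapsto h\iota(k_\theta)P_G$.

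Set $a = \lambda_1-\nu+\tfrac{n}{2}$, $b = \nu-\lambda_2+\tfrac{n}{2}$, $\varepsilon = \xi_1+\eta$, $\delta = \xi_2+\eta$. Formula \eqref{eq:BinKcoordinates} exhibits $\mathbf{B}_{\xi,\lambda}^{\eta,\nu}$ as integration of $f(h\iota(k_\theta))$ against the normalized Riesz distribution
\[ u_{a,b}^{\varepsilon,\delta}(\theta) = \frac{|\sin\theta|_\varepsilon^{a-1}(\cos\theta)^{b-1}}{\Gamma(\tfrac{a+[\varepsilon]}{2})\Gamma(\tfrac{b+[\delta]}{2})} \]
on $\theta\in(-\pi/2,\pi/2)$, together with an analogous companion on $(\pi/2,3\pi/2)$ obtained via the alternate expression of \eqref{eq:BinKcoordinates}. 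By the classical theory of Riesz distributions (\cite{GS64}), $u_{a,b}^{\varepsilon,\delta}$ depends holomorphically on $(a,b)\in\CC^2$; it collapses to a distribution supported on $\{0,\pm\pi/2\}$ precisely when one of the two $\Gamma$-factors drops out, i.e.\ when $a+[\varepsilon]\in-2\NN$ or $b+[\delta]\in-2\NN$. The hypothesis excludes both cases, so $u_{a,b}^{\varepsilon,\delta}$ is a genuine integrable density with non-trivial bulk.

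The first substantive step is to construct a holomorphic family $v_{a,b}^{\varepsilon,\delta}$ of distributions on the same interval that inverts $u$ in the convolution sense on parity-compatible smooth functions. This is a standard one-variable computation after the substitution $x=\sin\theta$. Given $\varphi\in\tau=\tau_{(\xi_1,\eta,\xi_2),(\lambda_1,\nu,\lambda_2)}$, I then define the candidate preimage $f\in\pi_{\xi,\lambda}$ pointwise in $h$ by applying $v_{a,b}^{\varepsilon,\delta}$ to $\theta\mapsto\varphi(h\iota(k_\theta))$ and extending to $G$ by $P_G$-equivariance. Since $H$ acts with open orbit on $G/P_G$ through $\iota(k_\theta)P_G$ for generic $\theta$, this specifies $f$ on a dense open subset, and smoothness of the one-dimensional inverse applied to the smooth function $\varphi$ yields smoothness of $f$ there.

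The main obstacle lies in the final step: verifying that this $f$ extends smoothly across the closed $H$-orbits on $G/P_G$ (which meet the transversal family $\iota(k_\theta)P_G$ at $\theta\in\{0\}$ and $\theta\in\{\pm\pi/2\}$), and therefore defines an element of $\pi_{\xi,\lambda}$. The parameter hypothesis enters here essentially: if only $a+[\varepsilon]\in-2\NN$, then by Remark~\ref{rem:DiffOpResidues} the operator $\mathbf{B}_{\xi,\lambda}^{\eta,\nu}$ collapses to the differential operator $\mathbf{C}_{\xi,\lambda}^{\eta,\nu}$, whose image is a strict submodule of $\tau$; symmetrically, $b+[\delta]\in-2\NN$ alone gives $\mathbf{D}_{\xi,\lambda}^{\eta,\nu}$ with analogous defect; and if both hold simultaneously, $\mathbf{B}=0$ by Proposition~\ref{prop:Bvanishing}. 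Away from both degeneracies, the inverse $v_{a,b}^{\varepsilon,\delta}$ is a genuine distribution, and the Taylor expansion of $\varphi$ transverse to the closed orbits at $\theta=0,\pm\pi/2$ matches the jets of $f$ forced by smoothness --- this is the precise content of the parameter hypothesis, and it is what guarantees surjectivity.
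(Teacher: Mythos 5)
Your overall strategy---build a preimage of $\varphi\in\tau_{(\xi_1,\eta,\xi_2),(\lambda_1,\nu,\lambda_2)}$ by lifting it to a function supported in the open $H$-orbit of $G/P_G$, using the transverse parameter $\theta$---is the right one, and it is morally what the paper does. However, the execution has gaps, and you have misdiagnosed where the hypothesis on the parameters is actually used.

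First, the construction of the candidate preimage is undefined as written: $\varphi\in\tau$ is a function on $H$, so $\varphi(h\iota(k_\theta))$ does not type-check for $\theta\neq0$. What you presumably intend is a tensor-product lift $f(h\iota(k_\theta))=w(\theta)\varphi(h)$ for some fixed profile $w$ in the transverse direction, so that $B_{\xi,\lambda}^{\eta,\nu}f(h)=\big(\int u_{a,b}^{\varepsilon,\delta}(\theta)w(\theta)\,d\theta\big)\varphi(h)$. The operator is a \emph{weighted average} over $\theta$, not a convolution, so ``inverting $u$ in the convolution sense'' is not the right notion; all that is required of $w$ is that the scalar pairing with the kernel be nonzero. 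This is exactly the paper's route: it factors $B_{\xi,\lambda}^{\eta,\nu}$ as $2\sqrt{\pi}\,\mathcal{M}_{\eta,\nu}\circ\rest$, where $\rest$ restricts to the open orbit $H/S\subseteq G/P_G$ and $\mathcal{M}_{\eta,\nu}$ is the Mellin transform in the $\RR^\times$-fiber of $H/S\to H/P_H$. Then $\rest$ has all $C_c^\infty$ sections in its image (extend by zero), and $\mathcal{M}_{\eta,\nu}$ is surjective onto $\tau$ by the tensor-product ansatz $f(x,t)=g(x)w(t)$ with a fixed $w\in C_c^\infty(\RR^\times)$ of nonzero Mellin transform at $(\eta,\nu)$.

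Second, the ``main obstacle'' you describe---extending $f$ smoothly across the closed $H$-orbits, with the parameter hypothesis entering to match jets---is not where the hypothesis is used, and the paper avoids it entirely. Choosing $w$ compactly supported in $\RR^\times$ (equivalently, away from $\theta\in\{0,\pm\pi/2\}$) makes the lifted $f$ vanish identically near the closed orbits, so it is trivially smooth for \emph{all} parameters; in particular $B_{\xi,\lambda}^{\eta,\nu}$ is surjective for all $(\lambda,\nu)$. The hypothesis $\lambda_1-\nu+\tfrac{n}{2}+[\xi_1+\eta],\ \nu-\lambda_2+\tfrac{n}{2}+[\xi_2+\eta]\notin-2\NN$ is needed only to guarantee that $n_\mathbf{B}(\xi,\lambda,\eta,\nu)$ is a finite nonzero constant, so that surjectivity of the \emph{normalized} $\mathbf{B}_{\xi,\lambda}^{\eta,\nu}=n_\mathbf{B}^{-1}B_{\xi,\lambda}^{\eta,\nu}$ follows from that of $B_{\xi,\lambda}^{\eta,\nu}$. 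When the hypothesis fails, surjectivity of $\mathbf{B}_{\xi,\lambda}^{\eta,\nu}$ breaks down not because smooth lifting fails, but because $\mathbf{B}_{\xi,\lambda}^{\eta,\nu}$ degenerates to a differential operator supported on a closed orbit (Remark~\ref{rem:DiffOpResidues}) or vanishes (Proposition~\ref{prop:Bvanishing}), annihilating every compactly supported lift. You should reorganize the argument around this reduction to the unnormalized operator rather than around a jet-matching condition.
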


We postpone the proof to Section~\ref{sec:Plancherel} where we find a different expression for $B_{\xi,\lambda}^{\eta,\nu}$.

\subsection{The open \texorpdfstring{$H$}{H}-orbit in \texorpdfstring{$G/P_G$}{G/PG} and an integral formula}

The action of $H$ on $G/P_G$ has an open dense orbit:

\begin{lemma}
    $H$ acts on $G/P_G$ with a unique open orbit, the orbit through the point
    \[
        x_0=I_{2n}+E_{2n,n}=\overline{n}_{2n,n}(1).
    \]
    In particular, this orbit is dense.
\end{lemma}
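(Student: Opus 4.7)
My plan is to identify $G/P_G$ with the Grassmannian $\mathrm{Gr}(n,2n)$ of $n$-planes in $\RR^{2n}$ via $gP_G\mapsto g\cdot V_0$, where $V_0=\mathrm{span}(e_1,\ldots,e_n)$ is the subspace stabilized by $P_G$ (a quick check: the $i$-th column of any block upper-triangular $p\in P_G$ has zeros in rows $n+1,\ldots,2n$ for $i\le n$). Under this identification, $x_0P_G$ corresponds to $x_0\cdot V_0=\mathrm{span}(e_1,\ldots,e_{n-1},e_n+e_{2n})$, and $H=\GL(2n-1,\RR)$ acts on $\RR^{2n}$ by fixing $e_{2n}$ and acting in the standard way on the hyperplane $\RR^{2n-1}=\mathrm{span}(e_1,\ldots,e_{2n-1})$. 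The $H$-action on $G/P_G$ then translates to $V\mapsto hV$ on the Grassmannian.

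The structural observation I would use is a dichotomy for $V\in\mathrm{Gr}(n,2n)$: either $V\subseteq\RR^{2n-1}$, in which case $V$ lies in the closed sub-Grassmannian $\mathrm{Gr}(n,2n-1)$ of dimension $n(n-1)<n^2=\dim\mathrm{Gr}(n,2n)$, or else $\dim(V\cap\RR^{2n-1})=n-1$ (forced by $V+\RR^{2n-1}\subseteq\RR^{2n}$ and the dimension formula). The set $\mathcal{U}$ of subspaces in the second case is therefore open and dense in $\mathrm{Gr}(n,2n)$, and it contains $x_0\cdot V_0$ since $e_n+e_{2n}\notin\RR^{2n-1}$.

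The main substantive step is then to show $H$ acts transitively on $\mathcal{U}$. Given $V\in\mathcal{U}$, set $W=V\cap\RR^{2n-1}$ and, after rescaling, write $V=W\oplus\RR(v'+e_{2n})$ with $v'\in\RR^{2n-1}\setminus W$; the pair $(W,v'\bmod W)$, with $v'\bmod W\neq 0$, determines $V$. Given two such pairs $(W_1,v_1')$ and $(W_2,v_2')$, any basis of $W_i$ together with $v_i'$ is a linearly independent $n$-tuple in $\RR^{2n-1}$; extending each to a full basis and mapping one to the other yields an element of $\GL(2n-1,\RR)=H$ sending the first pair to the second. This gives transitivity, so $\mathcal{U}$ is a single open $H$-orbit. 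Since any other orbit is contained in the strictly lower-dimensional complement $\mathrm{Gr}(n,2n-1)$ and hence cannot be open, $\mathcal{U}$ is the unique open orbit, and density follows from openness together with the lower-dimensional complement. I do not expect a serious obstacle here; the only real care is to pin down the Grassmannian identification and confirm the conventions for how $P_G$ stabilizes $V_0$ and how $H$ sits inside $G$.
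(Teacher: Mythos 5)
Your overall strategy (identify $G/P_G$ with $\mathrm{Gr}(n,2n)$, locate an explicit open dense set, show transitivity there by a linear-algebra argument) is the same as the paper's, but the set $\mathcal{U}$ you take as the candidate orbit is wrong, and the claimed transitivity on it fails. You defined $\mathcal{U}=\{V:V\not\subseteq\RR^{2n-1}\}$, but $H$ fixes $e_{2n}$ (it sits in the upper-left $(2n-1)\times(2n-1)$ block), so the closed subset $\{V\in\mathcal{U}:\RR e_{2n}\subseteq V\}$ is $H$-invariant, nonempty (e.g.\ $V=\mathrm{span}(e_1,\ldots,e_{n-1},e_{2n})$), and disjoint from the orbit of $x_0\cdot V_0$ (which does not contain $e_{2n}$). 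Hence $\mathcal{U}$ is not a single $H$-orbit.

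The error is hidden in the parametrization step: after writing $V=W\oplus\RR(v'+e_{2n})$ with $W=V\cap\RR^{2n-1}$, you asserted $v'\in\RR^{2n-1}\setminus W$, i.e.\ $v'\bmod W\neq 0$. That is not forced by $V\in\mathcal{U}$; the condition $v'\notin W$ is exactly equivalent to $e_{2n}\notin V$, which is an additional open condition you never imposed. The paper's open orbit is $\calO=\{V:\RR e_{2n}\not\subseteq V\not\subseteq\RR^{2n-1}\times\{0\}\}$, and on this smaller (still open and dense) set your linearly-independent-tuple argument goes through verbatim: $v_1,\ldots,v_{n-1},v'$ are then genuinely linearly independent in $\RR^{2n-1}$, so $H$ can map them to $e_1,\ldots,e_n$. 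Adding the missing condition $\RR e_{2n}\not\subseteq V$ repairs the proof and brings it into line with the paper's.
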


\begin{proof}
    We identify $G/P_G$ with the Grassmannian of $n$-dimensional subspaces of $\RR^{2n}$ and claim that
    $$ \calO = \{V\in G/P_G:\RR e_{2n}\not\subseteq V\not\subseteq\RR^{2n-1}\times\{0\}\} $$
    is a single $H$-orbit. Since this subset is clearly open and dense, and $x_0P_G$ corresponds to the subspace $\RR e_1\oplus\cdots\oplus\RR e_{n-1}\oplus\RR(e_n+e_{2n})\in\calO$, the claim follows. To show that $\calO$ is an $H$-orbit let $V\in\calO$ and write $V=\RR v_1\oplus\cdots\oplus\RR v_n$ with $v_1,\ldots,v_n\in\RR^{2n}$. Since $V\not\subseteq\RR^{2n-1}\times\{0\}$, one of the vectors $v_1,\ldots,v_n$ has to have a non-trivial $2n$-th component, say $v_n$. By rescaling we may assume that $v_n=v_n'+e_{2n}$ with $v_n'\in\RR^{2n-1}\times\{0\}$. Moreover, replacing each $v_j$, $j=1,\ldots,n-1$, by a linear combination of $v_j$ and $v_n$ we may assume that $v_1,\ldots,v_{n-1}\in\RR^{2n-1}\times\{0\}$. Since $\RR e_{2n}\not\subseteq V$, the vectors $v_1,\ldots,v_{n-1},v_n'\in\RR^{2n-1}\times\{0\}$ must be linearly independent. Hence, there exists $h\in H$ such that $hv_j=e_j$ for $j=1,\ldots,n-1$ and $hv_n'=e_n$. This shows the claim.
\end{proof}
The stabilizer subgroup $S$ of $x_0$ in $H$ is given by 
\[
S=\begin{pmatrix}
    \GL(n-1,\RR) & \RR^{n-1} & \RR^{(n-1)\times (n-1)}\\
    0 & 1 & \RR^{n-1}\\
    0 & 0 &\GL(n-1,\RR)
\end{pmatrix}.
\]

\begin{corollary}\label{cor:integralformula}
For $f\in C(G\times_{P_G}\CC_{2\rho_G})$ we have the following integral formula
\[
\int_{G/P_G}f(g)\,d(gP_G)=\int_{H/P_H}\bigg(\int_{\RR^\times} |t|^nf(h\overline{n}_{2n,n}(t))\,\frac{dt}{|t|}\bigg)d(hP_H),
\]
\end{corollary}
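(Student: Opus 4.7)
The plan is to verify the identity by restricting to open dense Bruhat cells and computing explicitly via a change of variables. I would use the embeddings of the opposite unipotent radicals $\overline{N}_G=\{\overline{n}_G(Y):Y\in\RR^{n\times n}\}$ and $\overline{N}_H$, parameterized by $(v,u,X)\in\RR^{n-1}\times\RR^{n-1}\times\RR^{(n-1)\times(n-1)}$, into $G/P_G$ and $H/P_H$ as open dense subsets. On these cells, the density bundle integrals reduce to standard Lebesgue integration. The goal is then to relate the map $\overline{N}_H\times\RR^\times\to G/P_G$, $(\overline{n}_H,t)\mapsto\overline{n}_H\overline{n}_{2n,n}(t)P_G$, to the Lebesgue parameterization of $\overline{N}_G$.

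The key step is an explicit Bruhat-type decomposition of $\overline{n}_H\cdot\overline{n}_{2n,n}(t)$ with respect to $P_G$. Using the $(n-1,1,n-1,1)$-block decomposition of $2n\times 2n$ matrices, a direct multiplication gives
\[
\overline{n}_H\cdot\overline{n}_{2n,n}(t)=\overline{n}_G(Y)\cdot p
\]
with
\[
Y=\begin{pmatrix} X-uv^\intercal & u \\ -tv^\intercal & t \end{pmatrix},\qquad p=\begin{pmatrix} M_1 & 0 \\ 0 & I_n \end{pmatrix},\qquad M_1=\begin{pmatrix} I_{n-1} & 0 \\ v^\intercal & 1\end{pmatrix}.
\]
Since $\det M_1=1=\det I_n$, the factor $p$ lies in $\SL^\pm(n,\RR)\times\SL^\pm(n,\RR)=M_G$. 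Consequently the density character $e^{2\rho_G}$ of $P_G$ is trivial on $p$, and therefore $f(\overline{n}_H\overline{n}_{2n,n}(t))=f(\overline{n}_G(Y))$ for any section $f$ of the density bundle.

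The Jacobian of the map $(X,u,v,t)\mapsto Y$ is block upper-triangular with diagonal blocks $I_{(n-1)^2}$, $I_{n-1}$, $-tI_{n-1}$, and $1$, giving $|\det J|=|t|^{n-1}$. Together with the preceding observation, the change of variables produces
\[
\int_{\overline{N}_G}f(\overline{n}_G)\,dY=\int_{\overline{N}_H\times\RR^\times}|t|^{n-1}f(\overline{n}_H\overline{n}_{2n,n}(t))\,dt\,d\overline{n}_H.
\]
Rewriting $|t|^{n-1}\,dt=|t|^n\cdot dt/|t|$ and identifying $\overline{N}_G,\overline{N}_H$ with open dense subsets of $G/P_G,H/P_H$ on which the invariant integrals reduce to Lebesgue integration yields the desired formula.

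The main obstacle I expect is performing the block-matrix decomposition carefully with the nested $(n-1,1,n-1,1)$ block structure, and verifying that the $P_G$-factor $p$ has trivial density character. A secondary subtlety is that the inner $\RR^\times$-integrand must descend to a section of the density bundle on $H/P_H$, which is implicit in the statement; this can be deduced a posteriori from the identity just established once the computation on the dense Bruhat cells is complete.
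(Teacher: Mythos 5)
Your proof is correct, and it takes a genuinely different route from the paper's. The paper argues via abstract invariance theory: it uses the fact that the density bundle $G\times_{P_G}\CC_{2\rho_G}$ restricts to the density bundle over the open dense orbit $H/S\cong Hx_0P_G$, normalizes the $H$-invariant integral on $H/S$ to agree with $d(gP_G)$, and then applies the general fibration integral formula (\cite[Theorem~1.1]{CF23}) to the chain $S\subseteq P_H\subseteq H$ to produce the inner $\RR^\times$-integral via $P_H/S\cong\RR^\times$ and the relation $\diag(I_{n-1},t,I_{n-1})x_0=\overline{n}_{2n,n}(t^{-1})\diag(I_{n-1},t,I_{n-1})$. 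You instead make everything explicit on Bruhat cells: your block factorization $\overline{n}_H\overline{n}_{2n,n}(t)=\overline{n}_G(Y)\,p$ with $Y=\bigl(\begin{smallmatrix}X-uv^\intercal&u\\-tv^\intercal&t\end{smallmatrix}\bigr)$ and $p\in\SL(n,\RR)\times\SL(n,\RR)\subset M_G$ is correct (and the triviality of the density character on $M_G$ is exactly what is needed), and the Jacobian computation $|\det J|=|t|^{n-1}$ is right — the Jacobian is block triangular in the ordering $(X,u,v,t)\mapsto(Y_{11},Y_{12},Y_{21},Y_{22})$ with diagonal blocks $I_{(n-1)^2}$, $I_{n-1}$, $-tI_{n-1}$, $1$. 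The trade-off is the usual one: the paper's approach is shorter and reuses general machinery already cited elsewhere in the paper, whereas your computation is self-contained, makes the source of the $|t|^n$ factor completely transparent, and as a byproduct gives an explicit parametrization of the open $H$-orbit that could be useful elsewhere. Your concluding remark about the inner integrand descending to a density on $H/P_H$ is sound; one can also verify it directly by pushing the $A_H$-equivariance of $f$ through the substitution $p\,\overline{n}_{2n,n}(t)=\overline{n}_{2n,n}(t/a_2)\,p$, which reproduces exactly the character $e^{-2\rho_H}$.
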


\begin{proof}
The density bundle $G\times_{P_G}\CC_{2\rho_G}$ over $G/P_G$ restricts to the density bundle over the open $H$-orbit $H/S$. Denote by $d(hS)$ the unique $H$-invariant integral on the latter. Since the open $H$-orbit is dense, we can normalize $d(hS)$ so that
\[
\int_{G/P_G}f(g)\,d(gP_G)=\int_{H/S}f(hx_0)\,d(hS)=\int_{H/P_H}\int_{P_H/S}f(hpx_0)\,d(pS)\,d(hP_H),
\]
where we have used \cite[Theorem 1.1]{CF23} in the second step. We have that $P_H/S\simeq \RR^\times $ by $t\mapsto \diag(I_{n-1},t,I_{n-1})S$, so the result follows from the identity $\diag(I_{n-1},t,I_{n-1})x_0=\overline{n}_{2n,n}(t^{-1})\diag(I_{n-1},t,I_{n-1})$ and the change of variables $t\to t^{-1}$.
\end{proof}

\subsection{Relating the two types of intertwining operators}\label{sec:relating the two types of intertwining operators}

In this section we show how the two types of symmetry breaking operators are related by the Knapp--Stein intertwining operators. We use these relations to finish the proof that $\mathbf{A}_{\xi,\lambda}^{\eta,\nu}$ is holomorphic in $(\lambda,\nu)\in\CC^2\times\CC^3$ (see Proposition~\ref{prop:A-SBOholomorphic}).

Using the functions $\Phi_i$ and $\Psi_j$ from Subsection \ref{subsec:A-SBO} we can rewrite the Knapp--Stein intertwining operators as integrals over $G/P_G$ and $H/P_H$ in the following way
\[
T_{\xi,\lambda}f(g_0)=\int_{G/P_G}|\Phi_n(g)|^{\lambda_1-\lambda_2-n}_{\xi_1+\xi_2}|\det(g)|^{\lambda_2+\frac{n}{2}}_{\xi_2} f(g_0g)\,d(gP_G)
\]
and 
\[
S_{\eta,\nu}f(h_0)=(-1)^{(n+1)(\xi_2+\eta)}\!\!\!\int_{H/P_G}\! \!|\Psi_{n-1}(h)|^{\nu_1-\nu_2-\frac{n}{2}}_{\eta_1+\eta_2}|\Psi_n(h)|^{\nu_2-\nu_3-\frac{n}{2}}_{\eta_2+\eta_3}|\det(h)|^{\nu_3+\frac{n}{2}}_{\eta_3}f(h_0h)\,d(hP_H)
\]
This can be verified by seeing that the values of the integral kernels coincide with integral kernels for the Knapp--Stein intertwining operators obtained in Section \ref{sec:KnappSteinAndUnitary} on $\overline{N}_G$ and $\overline{N}_H$ respectively. 
In total we have defined four different families of operators namely $\mathbf{T}_{\xi,\lambda}$, $\mathbf{S}_{\eta,\nu}$, $\mathbf{A}_{\xi,\lambda}^{\eta,\nu}$ and $\mathbf{B}_{\xi,\lambda}^{\eta,\nu}$. The following proposition shows how these are related. The statement can be visualized by the following diagram which commutes up to a constant made explicit in the proposition.
\[\begin{tikzcd}
	{\pi_{(\xi_1,\xi_2),(\lambda_1,\lambda_2)}} &&& {\pi_{(\xi_2,\xi_1),(\lambda_2,\lambda_1)}}\\ &&& \\
	{\tau_{(\xi_1,\nu,\xi_2),(\lambda_1,\nu,\lambda_2)}} &&& {\tau_{(\xi_2,\eta,\xi_1),(\lambda_2,\nu,\lambda_1)}}
	\arrow["{\mathbf{B}_{\xi,\lambda}^{\eta,\nu}}"', from=1-1, to=3-1]
	\arrow["{\mathbf{T}_{\xi,\lambda}}", from=1-1, to=1-4]
	\arrow["{\mathbf{A}_{\xi,\lambda}^{\eta,\nu}}"{description}, from=1-1, to=3-4]
	\arrow["{\mathbf{S}_{(\xi_1,\eta,\xi_2),(\lambda_1,\nu,\lambda_2)}}"', from=3-1, to=3-4]
	\arrow["{\mathbf{B}_{(\xi_2,\xi_1),(\lambda_2,\lambda_1)}^{\eta,\nu}}", from=1-4, to=3-4]
\end{tikzcd}\]

\begin{proposition}\label{Prop: functional equations}
We have the following relations between $ \mathbf{A}_{\xi ,\lambda }^{\eta ,\nu }$ and $\mathbf{B}_{\xi ,\lambda }^{\eta ,\nu }$:
\begin{equation} \label{eq:Functionaleq: TB=A}
\mathbf{S}_{(\xi_1,\eta,\xi_2) ,(\lambda_1,\nu,\lambda_2) } \circ \mathbf{B}_{\xi ,\lambda }^{\eta ,\nu }=\frac{1}{n_\mathbf{B}(\xi,\lambda,\eta,\nu)} \mathbf{A}_{\xi ,\lambda }^{\eta ,\nu }
\end{equation}
and
\begin{equation}\label{eq:Functionaleq: BT=A}
\mathbf{B}_{w_G (\xi ,\lambda )}^{\eta ,\nu }\circ \mathbf{T} _{\xi ,\lambda }=\frac{\sqrt{\pi}(-1)^{(\eta+\xi_2)(\xi_1+\xi_2)}}{\Gamma(\frac{\lambda_2-\lambda_1+n+[\xi_2+\xi_1]}{2})} \mathbf{A}_{\xi ,\lambda }^{\eta ,\nu }.
\end{equation}
\end{proposition}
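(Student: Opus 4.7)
The plan is to verify both identities in a non-empty open region of parameters $(\lambda,\nu)$ where all integrals converge absolutely and then extend by meromorphic continuation. In such a region I would realize the compositions on the left-hand sides as iterated integrals using the kernel forms of $T_{\xi,\lambda}$ and $S_{\eta,\nu}$ from Sections~\ref{sec:KSforG} and~\ref{sec:KSforH}, together with the defining integrals for $A_{\xi,\lambda}^{\eta,\nu}$ and $B_{\xi,\lambda}^{\eta,\nu}$ from Sections~\ref{subsec:A-SBO} and~\ref{subsec:B-SBO}. Throughout I would work with the unnormalized operators and match the normalization constants at the end.

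For \eqref{eq:Functionaleq: BT=A}, the plan is to write
\[
B_{w_G(\xi,\lambda)}^{\eta,\nu}(T_{\xi,\lambda}f)(h)=\int_{\RR^\times}\int_{G/P_G}|t|^{\lambda_2-\nu+\frac{n}{2}}_{\xi_2+\eta}|\Phi_n(g)|^{\lambda_1-\lambda_2-n}_{\xi_1+\xi_2}|\det(g)|^{\lambda_2+\frac{n}{2}}_{\xi_2}f(h\overline{n}_{2n,n}(t)g)\,d(gP_G)\,\tfrac{dt}{|t|},
\]
and then, after substituting $g\mapsto \overline{n}_{2n,n}(-t)g$ and applying Fubini, to perform the $t$-integration first. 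The key algebraic step is the multilinearity identity $\Phi_n(\overline{n}_{2n,n}(-t)g)=\Phi_n(g)+(-1)^n t\,\Psi_n(g)$, obtained by expanding the bottom-left $n\times n$ minor along the $2n$-th row (the only row affected by $\overline{n}_{2n,n}(-t)$), while $\det(g)$ is preserved. The remaining $t$-integral
\[
\int_\RR |t|^{\lambda_2-\nu+\frac{n}{2}-1}_{\xi_2+\eta}|\Phi_n(g)+(-1)^n t\,\Psi_n(g)|^{\lambda_1-\lambda_2-n}_{\xi_1+\xi_2}\,dt
\]
is then evaluated via the substitution $t\mapsto (-1)^{n+1}\Phi_n(g)u/\Psi_n(g)$ and the classical signed Beta integral, giving a Gamma-function constant times $|\Phi_n(g)|^{\lambda_1-\nu-\frac{n}{2}}_{\xi_1+\eta}|\Psi_n(g)|^{\nu-\lambda_2-\frac{n}{2}}_{\xi_2+\eta}$. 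Combined with the factor $|\det(g)|^{\lambda_2+\frac{n}{2}}_{\xi_2}$, this reproduces the kernel $K_{\xi,\lambda}^{\eta,\nu}(g)$ of $A_{\xi,\lambda}^{\eta,\nu}$ up to a scalar, and the normalization bookkeeping yields~\eqref{eq:Functionaleq: BT=A}.

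For \eqref{eq:Functionaleq: TB=A}, I would compose $S_{(\xi_1,\eta,\xi_2),(\lambda_1,\nu,\lambda_2)}$ with $B_{\xi,\lambda}^{\eta,\nu}$ to obtain
\[
(S\circ B)f(h)=\int_{\overline{N}_H}\int_{\RR^\times}|t|^{\lambda_1-\nu+\frac{n}{2}}_{\xi_1+\eta}f(h\tilde{w}_H\overline{n}\,\overline{n}_{2n,n}(t))\,\tfrac{dt}{|t|}\,d\overline{n}.
\]
Writing $\overline{n}$ in the matrix coordinates $(X,u,v)$ of Section~\ref{sec:KSforH} and computing the block form of $\tilde{w}_H\overline{n}\,\overline{n}_{2n,n}(t)$ inside $G$, I would establish a Bruhat factorization $\tilde{w}_H\overline{n}\,\overline{n}_{2n,n}(t)=\overline{n}_Y\cdot p$ with $\overline{n}_Y\in\overline{N}_G$ and $p=man\in P_G$. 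This gives a change of variables from $(X,u,v,t)$ to $Y\in\RR^{n\times n}$ whose Jacobian, together with the equivariance factor $\xi(m)e^{-\lambda-\rho_G}(a)$ produced by the $P_G$-part, reproduces exactly the integrand of $A_{\xi,\lambda}^{\eta,\nu}$ in the non-compact $\overline{N}_G$-picture of Section~\ref{subsec:A-SBO}. Collecting the normalization factors $n_\mathbf{S}$, $n_\mathbf{B}$, and $n_\mathbf{A}$ then yields the stated identity~\eqref{eq:Functionaleq: TB=A}.

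The main obstacle is the careful bookkeeping of signs, parities, and Gamma constants in the one-dimensional Riesz integral for \eqref{eq:Functionaleq: BT=A}: the signs produced by the substitution $t\mapsto(-1)^{n+1}\Phi_n(g)u/\Psi_n(g)$, the signed Beta integral, and the normalization ratio $n_\mathbf{T}/(n_\mathbf{B}\cdot n_\mathbf{A}^{-1})$ must collectively reproduce both the Gamma factor $\Gamma(\tfrac{\lambda_2-\lambda_1+n+[\xi_2+\xi_1]}{2})^{-1}$ and the precise sign $(-1)^{(\eta+\xi_2)(\xi_1+\xi_2)}$ of the stated constant. A secondary obstacle is the explicit Bruhat factorization and the computation of its Jacobian for \eqref{eq:Functionaleq: TB=A}, which requires some careful linear algebra in the coordinates $(X,u,v,t)$.
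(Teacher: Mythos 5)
Your proof of identity~\eqref{eq:Functionaleq: BT=A} is essentially the paper's argument: after unwinding the double integral, substitute $g\mapsto\overline{n}_{2n,n}(-t)g$ (equivalently, use $G$-invariance of $d(gP_G)$), apply the cofactor-expansion identity $\Phi_n(\overline{n}_{2n,n}(-t)g)=\Phi_n(g)+(-1)^nt\Psi_n(g)$, switch the order of integration, and evaluate the remaining one-dimensional Riesz/Beta-type integral (the paper cites \cite[Proposition A.1]{DF24}) to reassemble the kernel of $A_{\xi,\lambda}^{\eta,\nu}$. The normalization and sign bookkeeping is exactly where you correctly flag the residual work.

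For identity~\eqref{eq:Functionaleq: TB=A}, however, you take a genuinely different route from the paper, and yours is substantially more laborious. The paper does not compute $S\circ B$ head-on. Instead it starts from $A_{\xi,\lambda}^{\eta,\nu}$ written as an integral over $G/P_G$ with kernel $|\Phi_n|^{\cdots}|\Psi_n|^{\cdots}|\det|^{\cdots}$ and applies the already-established disintegration of Corollary~\ref{cor:integralformula},
\[
\int_{G/P_G}f\,d(gP_G)=\int_{H/P_H}\int_{\RR^\times}|t|^{\,n}f(h\,\overline{n}_{2n,n}(t))\,\tfrac{dt}{|t|}\,d(hP_H),
\]
which slices the open $H$-orbit in $G/P_G$ into $H/P_H$ times the $\RR^\times$-fiber. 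The single elementary observation $\Phi_n(h\,\overline{n}_{2n,n}(t))=t\,\Psi_{n-1}(h)$ (together with $\Psi_n$ and $\det$ being insensitive to the $2n$-th row) immediately splits the integrand into the $B$-integrand over the fiber $\RR^\times$ and the $S$-kernel over $H/P_H$; no Bruhat factorization, Jacobian, or equivariance-factor computation is needed. Your proposal, by contrast, would compute $S\circ B$ as an iterated integral over $\overline{N}_H\times\RR^\times$ and then perform a change of variables $(X,u,v,t)\mapsto Y\in\RR^{n\times n}$ via a Bruhat decomposition of $\tilde{w}_H\overline{n}\,\overline{n}_{2n,n}(t)$ in $G$. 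Dimensions match ($n^2$ on each side) and in principle this works, but you would be re-deriving the content of Corollary~\ref{cor:integralformula} in unnormalized coordinates, together with the nontrivial Jacobian and the $P_G$-equivariance factor, all inside the proof. Each approach has its merits: the paper isolates the geometric input as a reusable lemma and keeps the functional-equation proof short, whereas your approach is more self-contained but costs you an explicit linear-algebra and Jacobian computation that the paper entirely avoids. If you pursue your route, be aware that most of the ``secondary obstacle'' you identify (the Bruhat factorization and its Jacobian) is precisely what the integral formula of Corollary~\ref{cor:integralformula} packages for you.
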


\begin{proof}
For the first identity, we write 
\[
A_{\xi,\lambda}^{\eta,\nu}f(h_0)=\int_{G/P_G}|\Phi_n(g)|^{\lambda_1-\nu-\frac{n}{2}}_{\xi_1+\eta}|\Psi_n(g)|^{\nu-\lambda_2-\frac{n}{2}}_{\eta+\xi_2}|\det(g)|^{\lambda_2+\frac{n}{2}}_{\xi_2}f(h_0g)\,d(gP_G)
\]
and use Corollary \ref{cor:integralformula} to rewrite this expression as
\[
\int_{H/P_H}|\Psi_{n-1}(h)|^{\lambda_1-\nu-\frac{n}{2}}_{\xi_1+\eta}|\Psi_n(h)|^{\nu-\lambda_2-\frac{n}{2}}_{\eta+\xi_2}|\det(h)|^{\lambda_2+\frac{n}{2}}_{\xi_2}\int_{\RR^\times} |t|^{\lambda_1-\nu+\frac{n}{2}}_{\xi_1+\eta}f(h_0h\overline{n}_{2n,n}(t))\,\frac{dt}{|t|}\,d(hP_H),
\]
since $\Phi_n(h\overline{n}_{2n,n}(t))=t\Psi_{n-1}(h)$. This is the first identity up to normalizations.

For the second identity write
\begin{multline*}
(B_{w_G(\xi,\lambda)}^{\eta,\nu}\circ T_{\xi,\lambda})f(h_0)\\=\int_{\RR^\times}|t|^{\lambda_2-\nu+\frac{n}{2}}_{\xi_2+\eta}\int_{G/P_G}|\Phi_n(g)|^{\lambda_1-\lambda_2-n}_{\xi_1+\xi_2}|\det(g)|^{\lambda_2+\frac{n}{2}}_{\xi_2}f(h_0\overline{n}_{2n,n}(t)g)\,d(gP_G)\,\frac{dt}{|t|}
\end{multline*}
and by using the $G$-invariance of the integral $d(gP_G)$ we get 
\[
\int_{G/P_G}|\det(g)|^{\lambda_2+\frac{n}{2}}_{\xi_2}f(h_0g)\int_{\RR^\times} |t|^{\lambda_2-\nu+\frac{n}{2}}_{\xi_2+\eta}|\Phi_n(g)+(-1)^nt\Psi_n(g)|^{\lambda_1-\lambda_2-n}_{\xi_1+\xi_2}\frac{dt}{|t|}\,d(gP_G),
\]
since $\Phi_n(\overline{n}_{2n,n}(-t)g)=\Phi_n(g)+(-1)^nt\Psi_n(g)$. The inner integral can be computed using \cite[Proposition A.1]{DF24} giving us $\sqrt{\pi}(-1)^{(\eta+\xi_2)(\xi_1+\xi_2)}$, some gamma-factors and the factors $|\Phi_n(g)|^{\lambda_1-\nu-\frac{n}{2}}_{\xi_1+\eta}$ and $|\Psi_n(g)|^{\nu-\lambda_2-\frac{n}{2}}_{\eta+\xi_2}$ which is then exactly the integral kernel of $A_{\xi,\lambda}^{\eta,\nu}$.
\end{proof}

We have already established that $\mathbf{T}_{\xi,\lambda}$, $\mathbf{S}_{\eta,\nu}$ and $\mathbf{B}_{\xi,\lambda}^{\eta,\nu}$ are holomorphic, so we can use these identities to show that $\mathbf{A}_{\xi,\lambda}^{\eta,\nu}$ is holomorphic as well.

\begin{proof}[Proof of Proposition~\ref{prop:A-SBOholomorphic}]
Applying \eqref{eq:Functionaleq: BT=A} in combination with Propositions~\ref{prop:storTonyholomorphic} and \ref{prop:B-SBOholomorphic} we see that $\Gamma(\frac{\lambda_2-\lambda_1+n+[\xi_1+\xi_2]}{2})^{-1}\mathbf{A}_{\xi,\lambda}^{\eta,\nu}$ is holomorphic. The same argument with \eqref{eq:Functionaleq: TB=A} in combination with Propositions~\ref{prop:lilleTonyholomorphic} and \ref{prop:B-SBOholomorphic} shows that $n_\mathbf{B}(\xi,\lambda,\eta,\nu)^{-1}\mathbf{A}_{\xi ,\lambda }^{\eta ,\nu }$ is holomorphic. Since the intersection of the poles of $\Gamma(\frac{\lambda_2-\lambda_1+n+[\xi_1+\xi_2]}{2})$ and $n_\mathbf{B}(\xi,\lambda,\eta,\nu)$ is of codimension at least two, the claim follows from Hartog's Theorem.
\end{proof}

\section{A Plancherel formula for the unitary principal series}\label{sec:Plancherel}

For $\lambda\in i\RR^2$, the representation $\pi_{\xi,\lambda}$ is unitary with respect to the $L^2$-inner product on sections of the corresponding line bundle over $G/P_G$. Therefore, the decomposition of $\pi_{\xi,\lambda}|_H$ is related to the open orbits of $H$ on $G/P_G$.

The stabilizer subgroup $S$ is contained in the parabolic subgroup $P_H$ and $H/S\to H/P_H$ is a principal fiber bundle with fiber $\RR^\times$. We consider the decomposition $S=M_SA_SN_S$ given by
\[
M_S=\begin{pmatrix}
    \SL^{\pm}(n-1,\RR)& &\\
    & 1 &\\
    & & \SL^{\pm}(n-1,\RR)
\end{pmatrix}\qquad \&\qquad A_S=\begin{pmatrix}
    \RR_+ I_{n-1} & &\\
    & 1 & \\
     & & \RR_+ I_{n-1}
\end{pmatrix}
\]
and $N_S=N_H$. In a similar way to Section \ref{sec:Representations} we can consider characters of $M_S$ as $\xi=(\xi_1,\xi_2)\in (\ZZ/2\ZZ)^2$ and of $A_S$ as $\lambda=(\lambda_1,\lambda_2)\in \CC^2$ and extend $\xi\otimes e^{\lambda}$ trivially to $S=M_SA_SN_S$ and write $\xi\otimes e^\lambda\otimes 1$ for it. Using smooth (normalized) induction from $S$ to $H$ we obtain $\Ind_S^H(\xi\otimes e^\lambda\otimes1)$ as the left-regular representation on 
\[
\{f\in C^\infty(H)\,|\,f(hman)=\xi(m)^{-1}a^{-\lambda-\rho_G}f(h)\,\forall man\in M_SA_SN_S\}.
\]

\subsection{The Mellin transform}

For $\xi \in (\ZZ/2\ZZ)^2$ and $\lambda \in \CC^2$ we can then consider the following intertwining map 
\[
\rest:\pi_{\xi,\lambda}\to \Ind_S^H(\xi\otimes e^\lambda\otimes 1),\qquad f\mapsto\big[h\mapsto f(hx_0)\big],
\]
which is an isometric isomorphism on $L^2$-sections for $\lambda\in i\RR^2$. Denote by $L^2\mbox{-}\Ind_S^H(\xi\otimes e^\lambda\otimes 1)$ the unitarily induced representation in case $\lambda\in i\RR^2$ and consider the map $\mathcal{M}_{\eta,\nu}:L^2\mbox{-}\Ind_S^H(\xi\otimes e^\lambda\otimes 1)\to\tau_{(\xi_1,\eta,\xi_2),(\lambda_1,\nu,\lambda_2)}$ given by
\[
\mathcal{M}_{\eta,\nu}f(h)=\frac{1}{2\sqrt{\pi}}\int_{\RR^\times}|t|^{\nu}_{\eta} f\Big(h\diag(I_{n-1},t,I_{n-1}) \Big)\frac{dt}{|t|}.
\]

\begin{proposition}
For $\lambda\in i\RR^2$ the map $f\mapsto(\mathcal{M}_{\eta,\nu}f)_{\eta\in \{0,1\},\,\nu\in i\RR}$ defines a unitary isomorphism
\[
L^2\mbox{-}\Ind_S^H(\xi\otimes e^\lambda\otimes 1)\to \bigoplus_{\eta=0}^1\int_{i\RR}^\oplus L^2\mbox{-}\Ind_{P_H}^H((\xi_1,\eta,\xi_2)\otimes e^{(\lambda_1,\nu,\lambda_2)}\otimes 1)\,d\nu.
\]
\end{proposition}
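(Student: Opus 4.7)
The plan is to use the fibration $H/S\to H/P_H$, whose fiber $P_H/S\cong\RR^\times$ is parametrized by $t\mapsto\diag(I_{n-1},t,I_{n-1})S$, to reduce the statement to the classical Mellin--Plancherel theorem on $\RR^\times$.

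First I would factor the $L^2$-norm on $L^2\mbox{-}\Ind_S^H(\xi\otimes e^\lambda\otimes 1)$ via this fibration. As in the proof of Corollary~\ref{cor:integralformula}, the $H$-invariant density on $H/S$ decomposes compatibly with the projection $H/S\to H/P_H$, so that the $\RR^\times$-invariant measure $\frac{dt}{|t|}$ on the fiber together with the $H$-invariant density on the (compact) base $H/P_H$ recover the full density on $H/S$. For $\lambda\in i\RR^2$ the function $|f|^2$ carries exactly the correct density-bundle equivariance under $S$, hence
\[
\|f\|_{L^2(H/S)}^2=\int_{H/P_H}\int_{\RR^\times}\bigl|f(h\diag(I_{n-1},t,I_{n-1}))\bigr|^2\,\frac{dt}{|t|}\,d(hP_H).
\]

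Next I would apply the Mellin--Plancherel theorem pointwise in $h$. Writing $\RR^\times\simeq\{\pm1\}\times\RR$ via $t=\sgn(t)e^u$, the Haar measure $\frac{dt}{|t|}$ becomes $du$ and the unitary characters of $\RR^\times$ are precisely $t\mapsto|t|^\nu_\eta$ for $(\eta,\nu)\in\ZZ/2\ZZ\times i\RR$. The Fourier--Plancherel theorem then asserts that
\[
\varphi\mapsto\Bigl(\tfrac{1}{2\sqrt{\pi}}\int_{\RR^\times}|t|^\nu_\eta\varphi(t)\,\tfrac{dt}{|t|}\Bigr)_{\eta,\nu}
\]
is a unitary isomorphism $L^2(\RR^\times,\tfrac{dt}{|t|})\to\bigoplus_{\eta=0}^1\int_{i\RR}^\oplus\CC\,d\nu$, the factor $\tfrac{1}{2\sqrt{\pi}}$ being the combined Plancherel constant of $\{\pm1\}$ and $\RR$. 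Applied pointwise to $\varphi_h(t)=f(h\diag(I_{n-1},t,I_{n-1}))$ and combined with the first step, this yields the Plancherel identity
\[
\|f\|^2=\sum_{\eta=0}^1\int_{i\RR}\int_{H/P_H}|\mathcal{M}_{\eta,\nu}f(h)|^2\,d(hP_H)\,d\nu.
\]

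It remains to check that $\mathcal{M}_{\eta,\nu}f$ lies in $\tau_{(\xi_1,\eta,\xi_2),(\lambda_1,\nu,\lambda_2)}$ and that the construction is $H$-equivariant. Left $H$-equivariance is immediate from the definition. For the $P_H$-equivariance one notes that $\diag(I_{n-1},t,I_{n-1})$ belongs to $M_HA_H$ and normalizes $N_H=N_S$, so for any $p=man\in P_H$ one may rewrite
\[
p\cdot\diag(I_{n-1},t,I_{n-1})=s\cdot\diag(I_{n-1},t',I_{n-1})
\]
with $s\in S$ and $t'\in\RR^\times$ depending on $p$ and $t$; applying the $S$-equivariance of $f$ and changing variables in the Mellin integral produces exactly the character $(\xi_1,\eta,\xi_2)\otimes e^{(\lambda_1,\nu,\lambda_2)}$ of $P_H$. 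The main technical point is the norm factorization in the first step, which requires carefully matching the $H$-invariant densities on $H/S$, $H/P_H$, and the $\RR^\times$-fiber; once this is in place the remainder is the Fourier/Mellin--Plancherel theorem on $\RR^\times$ plus routine equivariance bookkeeping, and surjectivity onto the direct integral follows directly from surjectivity of the Plancherel isomorphism.
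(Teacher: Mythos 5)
Your proof follows the same two ingredients as the paper's: the density-bundle factorization of the $L^2$-norm along the fibration $H/S\to H/P_H$ (via \cite[Theorem 1.1]{CF23}) followed by the Mellin--Plancherel theorem applied fiberwise on $\RR^\times$, so the approach is essentially identical. One small bookkeeping slip in the equivariance check: to invoke the $S$-equivariance of $f$ you need the factorization $p\cdot\diag(I_{n-1},t,I_{n-1})=\diag(I_{n-1},t',I_{n-1})\cdot s$ with $s\in S$ on the \emph{right} (not $s\cdot\diag(I_{n-1},t',I_{n-1})$ as you wrote), after which the $M_SA_S$-part of $s$ is independent of $t$ and the argument goes through as you intend.
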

\begin{proof}
The map $\mathcal{M}_{\eta,\nu}$ is the Mellin transform of the even- and odd parts of
$$ f_h(t):= f(h\diag(I_{n-1},t,I_{n-1})) \qquad (t\in\RR^\times). $$
Since $P_H/S\simeq \RR^\times$, by \cite[Theorem 1.1]{CF23},
\begin{align*}
\int_{H/S}|f(h)|^2\,d(hS)&=\int_{H/P_H}\int_{P_H/S}|f(hp)|^2\,d(pS)\,d(hP_H)\\
&=\int_{H/P_H}\| f_h\|_{L^2(\RR^\times,\frac{dt}{|t|})}\,d(hP_H)\\
&=\sum_{\eta=0}^1\int_{i\RR}\| \mathcal{M}_{\eta,\nu}f\|^2_{L^2(H/P_H)}\,d\nu,
\end{align*}
where we use that the Mellin transform is a unitary isomorphism from $L^2(\RR_{>0},\frac{dt}{t})$ to $L^2(\RR,ds)$ on the even and odd part of $f_h$.
\end{proof}

\begin{proposition}
We have the following relation
\[
\mathcal{M}_{\eta,\nu}\circ \rest=\frac{1}{2\sqrt{\pi}}B_{\xi,\lambda}^{\eta,\nu}.
\]
\end{proposition}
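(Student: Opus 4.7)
The plan is to directly compute $(\mathcal{M}_{\eta,\nu}\circ\rest)f(h)$ starting from the definitions and massage the integral until it matches $B_{\xi,\lambda}^{\eta,\nu}f(h)$ up to the factor $\frac{1}{2\sqrt{\pi}}$. The key ingredient, which is essentially already established in the proof of Corollary~\ref{cor:integralformula}, is the commutation identity
\[
\diag(I_{n-1},t,I_{n-1})\cdot x_0 = \overline{n}_{2n,n}(t^{-1})\cdot\diag(I_{n-1},t,I_{n-1},1)
\]
in $G$, where the matrix on the left is viewed as an element of $G$ via the standard embedding $H\hookrightarrow G$. Unfolding $\mathcal{M}_{\eta,\nu}(\rest f)(h)$ and inserting this identity turns the integrand $f(h\cdot\diag(I_{n-1},t,I_{n-1})\cdot x_0)$ into $f(h\cdot\overline{n}_{2n,n}(t^{-1})\cdot d_t)$, where $d_t=\diag(I_{n-1},t,I_{n-1},1)\in P_G$.

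Next, I would use right $P_G$-equivariance of $f\in\pi_{\xi,\lambda}$ to pull the factor $d_t$ out of $f$. For this I decompose $d_t=m_t a_t$ with $m_t\in M_G$ and $a_t\in A_G$: the lower-right $n\times n$ block is the identity, so only the upper-left block $\diag(I_{n-1},t)\in\GL(n,\RR)$ contributes, giving $a_t=\diag(|t|^{1/n}I_n,I_n)$ and an $m_t$ whose upper block has determinant $\sgn(t)$. A direct computation then yields $\xi(m_t)=\sgn(t)^{\xi_1}$ and $a_t^{-\lambda-\rho_G}=|t|^{-\lambda_1-n/2}$ (in the paper's conventions for identifying $\lambda$ and $\rho_G$ with elements of $\CC^2$), so that
\[
f(h\,\overline{n}_{2n,n}(t^{-1})\,d_t)=\sgn(t)^{\xi_1}|t|^{-\lambda_1-\frac{n}{2}}f(h\,\overline{n}_{2n,n}(t^{-1})).
\]

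Substituting back into the definition of $\mathcal{M}_{\eta,\nu}$ and combining the signed powers $|t|^\nu_\eta$ and $\sgn(t)^{\xi_1}|t|^{-\lambda_1-n/2}$ produces a factor $|t|^{\nu-\lambda_1-\frac{n}{2}}_{\xi_1+\eta}$. Finally, the change of variables $s=t^{-1}$ in the multiplicative integral $dt/|t|$ flips the sign of the exponent on $|t|$ and converts $\overline{n}_{2n,n}(t^{-1})$ into $\overline{n}_{2n,n}(s)$, yielding exactly $\frac{1}{2\sqrt{\pi}}\int_{\RR^\times}|s|^{\lambda_1-\nu+\frac{n}{2}}_{\xi_1+\eta}f(h\,\overline{n}_{2n,n}(s))\frac{ds}{|s|}=\frac{1}{2\sqrt{\pi}}B_{\xi,\lambda}^{\eta,\nu}f(h)$.

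There is no real obstacle: the whole argument is a manipulation of the integral in the $\overline{N}_G$-direction, and the only step that requires some care is bookkeeping of the signs of $t$ and of the normalization factors for $\xi(m_t)$ and $a_t^{\lambda+\rho_G}$, which must be carried out consistently with the identifications made in Section~\ref{sec:DegSerG}. The convergence of the integrals during these manipulations is guaranteed in the range of parameters where $B_{\xi,\lambda}^{\eta,\nu}$ converges (Proposition~\ref{prop:B-SBOholomorphic}), and elsewhere the identity extends by holomorphicity in $(\lambda,\nu)$.
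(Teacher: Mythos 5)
Your proof is correct and follows exactly the same route as the paper's (very terse) argument: the commutation identity $\diag(I_{n-1},t,I_{n-1})x_0=\overline{n}_{2n,n}(t^{-1})\diag(I_{n-1},t,I_{n-1},1)$, the right $P_G$-equivariance of $f$ to strip off $\diag(I_{n-1},t,I_n)\in M_GA_G$ (producing the factor $|t|_{\xi_1}^{-\lambda_1-n/2}$), and the change of variables $t\mapsto t^{-1}$ in the multiplicative Haar measure. You merely spell out the $m_ta_t$-decomposition and the sign bookkeeping that the paper leaves implicit.
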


\begin{proof}
Note that
$$ \diag(I_{n-1},t,I_n)x_0 = \overline{n}_{2n,n}(t^{-1})\diag(I_{n-1},t,I_n), $$
thus the identity follows from the equivariance of $f$ and the substitution $t\to t^{-1}$ in the integral. 
\end{proof}

\begin{corollary}\label{cor:UnitaryPlancherel}
For $\lambda\in i\RR^2$ and $f\in \pi_{\xi,\lambda}$ we have the following Plancherel formula
\begin{equation}\label{eq: Plancherel formula}
\| f\|^2_{L^2(K_{2n}/M_G)}=\frac{1}{4\pi}\sum_{\eta\in\ZZ/2\ZZ}\int_{i\RR} \|B_{\xi,\lambda}^{\eta,\nu}f\|^2_{L^2(K_{2n-1}/M_H)}\,d\nu,
\end{equation}
and the corresponding direct integral decomposition
\[
\pi_{\xi,\lambda}|_H\simeq \bigoplus_{\eta\in\ZZ/2\ZZ}\int_{i\RR_+}^\oplus \tau_{(\xi_1,\eta,\xi_2),(\lambda_1,\nu,\lambda_2)}\,d\nu.
\]
\end{corollary}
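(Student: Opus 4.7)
The argument is a direct chaining of the three preceding propositions in this section. First, the isometric isomorphism $\rest: \pi_{\xi,\lambda} \to L^2\textup{-}\Ind_S^H(\xi \otimes e^\lambda \otimes 1)$ gives $\|f\|^2_{L^2(K_{2n}/M_G)} = \|\rest f\|^2_{L^2(H/S)}$. Next, the Mellin transform proposition (more precisely, the Plancherel identity derived in its proof) expresses this as $\sum_{\eta \in \ZZ/2\ZZ}\int_{i\RR} \|\mathcal{M}_{\eta,\nu}(\rest f)\|^2_{L^2(H/P_H)}\,d\nu$. Substituting the relation $\mathcal{M}_{\eta,\nu}\circ \rest = \frac{1}{2\sqrt{\pi}}\,B_{\xi,\lambda}^{\eta,\nu}$ and using $\bigl(\tfrac{1}{2\sqrt{\pi}}\bigr)^2 = \tfrac{1}{4\pi}$ yields the Plancherel formula \eqref{eq: Plancherel formula}. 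The constant $\tfrac{1}{4\pi}$ is therefore nothing but the square of the normalization built into $\mathcal{M}_{\eta,\nu}$.

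For the direct integral decomposition, I would observe that the map $f \mapsto \bigl(\tfrac{1}{2\sqrt{\pi}} B_{\xi,\lambda}^{\eta,\nu} f\bigr)_{\eta,\nu}$ is simultaneously a unitary isomorphism onto $\bigoplus_\eta \int_{i\RR}^\oplus \tau_{(\xi_1,\eta,\xi_2),(\lambda_1,\nu,\lambda_2)}\,d\nu$ (by the Plancherel formula just proved) and $H$-equivariant, since each $B_{\xi,\lambda}^{\eta,\nu}$ lies in $\Hom_H(\pi_{\xi,\lambda}|_H, \tau_{(\xi_1,\eta,\xi_2),(\lambda_1,\nu,\lambda_2)})$ by Proposition~\ref{prop:B-SBOholomorphic}. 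Hence $\pi_{\xi,\lambda}|_H$ is unitarily equivalent to the stated direct integral; rewriting the integration range as $i\RR_+$ is a reparametrization of the spectrum (reflecting the natural even/odd splitting in $t$ that already appears in the proof of the Mellin Plancherel identity).

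No serious technical obstacle arises, since convergence of the integrals, the Plancherel identity for the Mellin transform, and the intertwining property of $B_{\xi,\lambda}^{\eta,\nu}$ have all been established in the preceding propositions. The entire proof is thus a matter of composing three known unitary/isometric maps and bookkeeping the normalization $\tfrac{1}{2\sqrt{\pi}}$ through each step.
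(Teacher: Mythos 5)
Your derivation of the Plancherel identity $\|f\|^2_{L^2(K_{2n}/M_G)}=\frac{1}{4\pi}\sum_{\eta}\int_{i\RR}\|B_{\xi,\lambda}^{\eta,\nu}f\|^2\,d\nu$ is correct and is exactly what the paper intends: it is the chain $\rest$, then the Mellin--Plancherel proposition, then the relation $\mathcal{M}_{\eta,\nu}\circ\rest=\frac{1}{2\sqrt{\pi}}B_{\xi,\lambda}^{\eta,\nu}$, with $\bigl(\frac{1}{2\sqrt{\pi}}\bigr)^2=\frac{1}{4\pi}$. Chaining the two propositions also correctly identifies $\pi_{\xi,\lambda}|_H$ with $\bigoplus_{\eta}\int_{i\RR}^\oplus\tau_{(\xi_1,\eta,\xi_2),(\lambda_1,\nu,\lambda_2)}\,d\nu$, and the $H$-equivariance of $B_{\xi,\lambda}^{\eta,\nu}$ is indeed what makes this a unitary equivalence of $H$-representations.

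However, your justification for replacing $i\RR$ by $i\RR_+$ --- ``a reparametrization of the spectrum (reflecting the natural even/odd splitting in $t$)'' --- does not hold up. The even/odd splitting of $f_h$ on $\RR^\times$ is precisely what already produces the finite sum over $\eta\in\ZZ/2\ZZ$; it does not give any further folding of the Mellin parameter $\nu$. The Mellin transform on $L^2(\RR_{>0},\frac{dt}{t})$ has spectrum all of $i\RR$, and the Mellin--Plancherel proposition in the paper is indeed stated over $i\RR$. To legitimately fold $i\RR$ into $i\RR_+$ one would need $\tau_{(\xi_1,\eta,\xi_2),(\lambda_1,\nu,\lambda_2)}\simeq\tau_{(\xi_1,\eta,\xi_2),(\lambda_1,-\nu,\lambda_2)}$ for a.e.\ $\nu\in i\RR$. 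But by Lemma~\ref{lemma:FactorizationOfS} these are $\Ind_{Q_H}^H(\varpi_{(\xi_1,\xi_2),(\lambda_1,\lambda_2)}\otimes\chi_{\eta,\pm\nu})$, and since the relative Weyl group of $Q_H$ in $H$ is trivial (the two Levi blocks $\GL(2n-2)$ and $\GL(1)$ have different sizes) these inductions are inequivalent whenever $\chi_{\eta,\nu}\neq\chi_{\eta,-\nu}$, i.e.\ for all $\nu\neq 0$. So the step from $i\RR$ to $i\RR_+$ is a genuine gap; note that the paper states the Plancherel formula over $i\RR$ but the direct integral over $i\RR_+$ inside the very same corollary, without explanation, so the discrepancy appears already in the source and is not resolved by the ``reparametrization'' you invoke.
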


With this way of expressing $B_{\xi,\lambda}^{\eta,\nu}$ we can now give a proof of Proposition~\ref{prop:B-surjective}.

\begin{proof}[Proof of Proposition~\ref{prop:B-surjective}]
    The assumption assures that the normalizing factor $n_{\mathbf B}(\xi,\lambda,\eta,\nu)$ is non-singular, so the claim is equivalent to the unnormalized operator $B_{\xi,\lambda}^{\eta,\nu}$ being surjective. We can therefore use the previous proposition. The restriction map $$\rest:\pi_{\xi,\lambda}\to \Ind_S^H(\xi\otimes e^\lambda\otimes 1)$$ clearly has all $C_c^\infty$ sections on $H/S$ in its image, since those can simply be extended by zero to smooth sections on $G/P_G$. It therefore suffices to show that the Mellin transform $$\mathcal{M}_{\eta,\nu}:C_c^\infty\mbox{-}\Ind_S^H(\xi\otimes e^\lambda\otimes 1)\to\tau_{(\xi_1,\eta,\xi_2),(\lambda_1,\nu,\lambda_2)}$$ maps $C_c^\infty$ sections onto $\tau_{(\xi_1,\eta,\xi_2),(\lambda_1,\nu,\lambda_2)}$. By trivializing the fibration $H/S\to H/P_H$ and using a partition of unity this is reduced to the following local statement: Let $\Omega\subseteq\RR^d$ be open and $\eta\in\ZZ/2\ZZ$, $\nu\in\CC$, then the following operator is surjective:
    $$ \widetilde{\mathcal M}_{\eta,\nu}: C_c^\infty(\Omega\times\RR^\times)\to C_c^\infty(\Omega), \quad \widetilde{\mathcal M}_{\eta,\nu}f(x) = \int_{\RR^\times}|t|_\eta^\nu f(x,t)\frac{dt}{|t|}. $$
    This statement follows by considering functions of the form $f(x,t)=g(x)\varphi(t)$, where $\varphi\in C_c^\infty(\RR^\times)$ is a fixed test function (depending on $\eta$ and $\nu$) with $\int_{\RR^\times}|t|^\nu_\eta\varphi(t)\frac{dt}{|t|}\neq0$.
\end{proof}

\section{Decomposing unitary representations}\label{sec:DecomposingUnitaryRepresentations}

We now decompose the restriction to $H$ of all unitary representations of the degenerate series $\pi_{\xi,\lambda}$ and their unitarizable quotients.

\subsection{A Plancherel formula for the invariant Hermitian form}

Recall the invariant pairings $(\cdot\,|\,\cdot)_G$ and $(\cdot\,|\,\cdot)_H$ from \eqref{eq:InvPairingG} and \eqref{eq:InvPairingH}. When $\lambda\in i\RR^2$ the Hermitian form $(\,\cdot \,|\,\overline{\cdot}\,)_G$ on $\pi_{\xi,\lambda}\times \pi_{\xi,\lambda}$ defines the $L^2(K_{2n}/M_G)$-inner product and similarly for $\nu \in i\RR^3$ the Hermitian form $(\,\cdot \,|\,\overline{\cdot}\,)_H$ defines the $L^2(K_{2n-1}/M_H)$-inner product. Thus we can write the Plancherel formula \eqref{eq: Plancherel formula} for $f\in \pi_{\xi,\lambda}$ and $f'\in \pi_{\xi,-\lambda}$ as
\[
 (f\,|\,f')_G=(f\,|\,\overline{\overline{f'}})_G=\frac{1}{4\pi}\sum_{\eta=0}^1\int_{i\RR} ( B_{\xi,\lambda}^{\eta,\nu}f\,|\,\overline{B_{\xi,\lambda}^{\eta,\nu}\overline{f'}})_H\,d\nu=\frac{1}{4\pi}\sum_{\eta=0}^1\int_{i\RR} ( B_{\xi,\lambda}^{\eta,\nu}f\,|\,B_{\xi,-\lambda}^{\eta,-\nu}f')_H\,d\nu.
\]
Using this for $\xi_1=\xi_2$ and $\lambda\in i\RR^2$ with $f\in \pi_{\xi,\lambda}$ and $f'\in \pi_{\xi,-w_G\lambda}$ we get, using the functional equation of Proposition \ref{Prop: functional equations}, that
\begin{align}
    ( f\,|\, \mathbf{T}_{\xi,-w_G\lambda}& f')_G =\frac{1}{4\pi}\sum_{\eta=0}^1\int_{i\RR}(B_{\xi,\lambda}^{\eta,\nu}f\,|\,B_{\xi,-\lambda}^{\eta,-\nu}\circ \mathbf{T}_{\xi,-w_G\lambda}f')_H\,d\nu \label{eq: Plancherel på unitær akse} \\
    &=\frac{1}{4\sqrt{\pi}\Gamma(\frac{\lambda_2-\lambda_1+n}{2})}\sum_{\eta=0}^1\int_{i\RR} n_\mathbf{B}(\xi,\lambda,\eta,\nu)n_\mathbf{B}(\xi,-\lambda,\eta,-\nu)(\mathbf{B}_{\xi,\lambda}^{\eta,\nu}f\,|\,\mathbf{A}_{\xi,-w_G\lambda}^{\eta,-\nu}f')_H\,d\nu.\nonumber 
\end{align}
The left hand side is holomorphic in $\lambda\in \CC^2$ and when $f'=\overline{f}$ it induces the invariant norm on the unitarizable quotients of $\pi_{\xi,\lambda}$.  We want to analytically continue the right hand side from $\lambda\in i\RR^2$ to those $\lambda$ for which there are unitarizable quotients of $\pi_{\xi,\lambda}$, since by the identity theorem for holomorphic functions it remains equal to the left hand side. This will give us various branching laws. 

\subsection{Analytic continuation}\label{sec:AnalyticContinuation}
Set $\xi_1=\xi_2$ and $\lambda_1+\lambda_2=0$ and let $\mu=\lambda_1=-\lambda_2$ and $\delta=\xi_1+\eta=\xi_2+\eta$. Consider the meromorphic function
\begin{multline*}
    a(\delta,\mu,\nu) := n_\mathbf{B}(\xi,\lambda,\eta,\nu)n_\mathbf{B}(\xi,-\lambda,\eta,-\nu)\\
    = \Gamma\left(\frac{\mu+\nu+\frac{n}{2}+\delta}{2}\right)\Gamma\left(\frac{\mu-\nu+\frac{n}{2}+\delta}{2}\right)\Gamma\left(\frac{-\mu+\nu+\frac{n}{2}+\delta}{2}\right)\Gamma\left(\frac{-\mu-\nu+\frac{n}{2}+\delta}{2}\right),
\end{multline*}
and the holomorphic function
\[
Q(\delta,\mu,\nu)=(\mathbf{B}_{\xi,\lambda}^{\eta,\nu}f\,|\,\mathbf{A}_{\xi,-w_2\lambda}^{\eta,-\nu}f')_H,
\]
then \eqref{eq: Plancherel på unitær akse} can be written as
\begin{equation}
    ( f\, |\,\mathbf{T}_{\xi,-w_G\lambda}f')_G = \frac{1}{4\sqrt{\pi}\Gamma(\frac{n}{2}-\mu)}\sum_{\delta=0}^1\int_{i\RR}a(\delta,\mu,\nu)Q(\delta,\mu,\nu)\,d\nu.\label{eq:PlancherelWithAandQ}
\end{equation}

\begin{proposition} \label{prop:AnalyticCont}
    For all $\nu\in i\RR$ the function $a(\delta,\mu,\nu)Q(\delta,\mu,\nu)$ is holomorphic in the half-plane $\{\mu\in\CC:\Re\mu<\frac{n}{2}\}$.
\end{proposition}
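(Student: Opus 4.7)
The plan is to show that the only potential poles of $a(\delta,\mu,\nu)$ in the half-plane come from the factor $n_\mathbf{B}(\xi,\lambda,\eta,\nu)$, rewrite $aQ$ via the second functional equation so the remaining factors are manifestly holomorphic, and then verify the residue vanishes at each such pole. Writing $a=n_\mathbf{B}(\xi,\lambda,\eta,\nu)\,n_\mathbf{B}(\xi,-\lambda,\eta,-\nu)$ and using $\xi_1=\xi_2$, $\lambda_1=-\lambda_2=\mu$, the arguments of the second factor all have real part $\tfrac{n}{2}-\Re\mu>0$ or $\tfrac{n}{2}+\tfrac{\delta}{2}>0$ in the half-plane, so $n_\mathbf{B}(\xi,-\lambda,\eta,-\nu)$ is holomorphic there. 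Consequently, the only poles of $a$ in $\{\Re\mu<n/2\}$ are the simple poles of $n_\mathbf{B}(\xi,\lambda,\eta,\nu)$ at $\mu=\pm\nu-n/2-\delta-2k$, $k\in\NN$.

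For the second step I would verify the two book-keeping identities $n_\mathbf{B}(\xi,-\lambda,\eta,-\nu)=n_\mathbf{B}(w_G(\xi,\lambda),\eta,\nu)$ and $n_\mathbf{B}(\xi,\lambda,\eta,\nu)=n_\mathbf{B}(\xi,\lambda,\eta,-\nu)$ directly from the defining products of Gamma functions. Substituting $\mathbf{A}_{\xi,\lambda}^{\eta,\nu}=\tfrac{\Gamma(n/2-\mu)}{\sqrt{\pi}\,\epsilon}\,\mathbf{B}_{w_G(\xi,\lambda)}^{\eta,\nu}\mathbf{T}_{\xi,\lambda}$ (Proposition~\ref{Prop: functional equations}) into the definition of $Q$ and absorbing the two normalizers into the corresponding unnormalized operators gives
\[
aQ=\frac{\Gamma(n/2-\mu)}{\sqrt{\pi}\,\epsilon}\,\bigl(B_{w_G(\xi,\lambda)}^{\eta,\nu}\mathbf{T}_{\xi,\lambda}f \,\bigm|\, B_{\xi,\lambda}^{\eta,-\nu}f'\bigr)_H,
\]
with $\Gamma(n/2-\mu)$, $\mathbf{T}_{\xi,\lambda}$ (holomorphic by Proposition~\ref{prop:storTonyholomorphic}), and $B_{w_G(\xi,\lambda)}^{\eta,\nu}$ all holomorphic in the half-plane. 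The same trick applied to the second form $Q=(\mathbf{A}^{\eta,\nu}_{\xi,\lambda}f\mid\mathbf{B}^{\eta,-\nu}_{\xi,\lambda}f')_H$ produces the symmetric expression with the two factors in the pairing interchanged; either representation reduces holomorphy of $aQ$ to the absence of poles from a single $B$-factor with identified pole locations.

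For the final step, at each simple pole $\mu_0=\pm\nu-n/2-\delta-2k$ of $B_{\xi,\lambda}^{\eta,-\nu}$, Remark~\ref{rem:DiffOpResidues} identifies the residue (up to a non-zero holomorphic factor) as one of the differential operators $\mathbf{C}_{\xi,\lambda_0}^{\eta,-\nu}$ or $\mathbf{D}_{\xi,\lambda_0}^{\eta,-\nu}$. The residue of $aQ$ at $\mu_0$ is therefore proportional to a pairing of $B_{w_G(\xi,\lambda_0)}^{\eta,\nu}\mathbf{T}_{\xi,\lambda_0}f$ with such a differential-operator residue applied to $f'$, and the task is to show this pairing vanishes. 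I would deduce the vanishing by combining the functional equation \eqref{eq:Functionaleq: TB=A}, which at the pole of $n_\mathbf{B}(\xi,\lambda,\eta,\nu)$ forces $\mathbf{S}_{(\xi_1,\eta,\xi_2),(\lambda_1,\nu,\lambda_2)}\mathbf{B}_{\xi,\lambda}^{\eta,\nu}=0$, with the adjointness relation \eqref{eq:TonySymmetricH} applied to the analogous $\mathbf{S}_{(\xi_1,\eta,\xi_2),(\lambda_1,-\nu,\lambda_2)}\mathbf{B}_{\xi,\lambda}^{\eta,-\nu}=0$ coming from the equality $n_\mathbf{B}(\xi,\lambda,\eta,\nu)=n_\mathbf{B}(\xi,\lambda,\eta,-\nu)$; this precisely places the residue of $B_{\xi,\lambda}^{\eta,-\nu}f'$ in the orthogonal complement under $(\cdot\mid\cdot)_H$ of the image of $B_{w_G(\xi,\lambda)}^{\eta,\nu}\mathbf{T}_{\xi,\lambda}$. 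The hard part will be Step 3: the verification that the residue pairing indeed vanishes. The cleanest route is probably to split into the two cases $\mathbf{C}$ and $\mathbf{D}$ and exploit that the two symmetric expressions for $aQ$ give the same meromorphic function, so the residues computed in each form must agree, forcing both to be zero.
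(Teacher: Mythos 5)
Your overall strategy coincides with the paper's: rewrite $a=n_\mathbf{B}(\xi,\lambda,\eta,\nu)\,n_\mathbf{B}(\xi,-\lambda,\eta,-\nu)$, observe that the second factor is pole-free in the half-plane and that the poles of the first occur at $\mu\in\pm\nu-\tfrac{n}{2}-[\delta]-2\NN$, and then show $Q$ vanishes there to cancel them. Your use of the two functional equations, the identity $n_\mathbf{B}(\xi,\lambda,\eta,\nu)=n_\mathbf{B}(\xi,\lambda,\eta,-\nu)$, and the adjointness \eqref{eq:TonySymmetricH} also matches the paper's ingredients. However, there is a genuine gap at the decisive step, which you yourself flag as "the hard part."

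The gap is in deducing orthogonality. You want to conclude that, at $\mu_0$, the image of (the holomorphic factor carrying $\mathbf{T}$) lies in the image of $\mathbf{S}_{(\xi_1,\eta,\xi_2),(\lambda_1,\nu,\lambda_2)}$, so that the pairing with $\ker\mathbf{S}_{(\xi_1,\eta,\xi_2),(\lambda_1,-\nu,\lambda_2)}$ vanishes by \eqref{eq:TonySymmetricH}. But the naive reading of \eqref{eq:Functionaleq: TB=A}, namely $\mathbf{A}_{\xi,\lambda}^{\eta,\nu}=n_\mathbf{B}\,\mathbf{S}\circ\mathbf{B}_{\xi,\lambda}^{\eta,\nu}$, does \emph{not} give that $\mathbf{A}|_{\mu_0}$ lies in the image of $\mathbf{S}|_{\mu_0}$: the scalar $n_\mathbf{B}$ has a pole at $\mu_0$, so $\mathbf{A}|_{\mu_0}$ is a leading term, not a value of $\mathbf{S}\circ(\mbox{something})$. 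What is needed is a factorization $\mathbf{A}_{\xi,\lambda}^{\eta,\nu}=\mathbf{S}_{(\xi_1,\eta,\xi_2),(\lambda_1,\nu,\lambda_2)}\circ\widetilde{\mathbf{B}}$ with $\widetilde{\mathbf{B}}$ \emph{holomorphic at $\mu_0$}. This is exactly what the paper establishes in Lemma~\ref{lem:VanishingOfQ}: expand $\mathbf{B}_{\xi,\lambda}^{\eta,\nu}=\sum_{i,j}z_1^iz_2^jB_{i,j}$ around the pole, note $B_{0,0}=0$ (Proposition~\ref{prop:Bvanishing}), and deduce from \eqref{eq:Functionaleq: TB=A} that $\mathbf{S}\circ B_{i,0}=\mathbf{S}\circ B_{0,j}=0$ for all $i,j$; only then does $n_\mathbf{B}\,\mathbf{S}\circ\mathbf{B}$ become $\mathbf{S}$ composed with a holomorphic operator. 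Your alternative final idea -- that the two "symmetric" expressions for $aQ$ must give equal residues, hence zero -- does not work: equality of two residues gives no information about their value.

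Two further issues. First, you treat the poles as simple, but for $\nu=0$ the two offending Gamma factors in $n_\mathbf{B}(\xi,\lambda,\eta,\nu)$ coincide and the poles of $a$ become double; the paper covers this in Lemma~\ref{lem:VanishingOfQ}~(2) by showing $Q$ has a zero of order at least two, using the same Taylor expansion with $k=\ell$ and $z_1=z_2$. Your proposal does not address $\nu=0$ at all, even though the proposition asserts holomorphy for all $\nu\in i\RR$. Second, the identification of the residues of $B_{\xi,\lambda}^{\eta,\nu}$ with the differential operators $\mathbf{C},\mathbf{D}$ from Remark~\ref{rem:DiffOpResidues} is a red herring here: it gives a formula for the residue but does not by itself produce the orthogonality you need, and the paper's proof does not use it.
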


\begin{proof}
    The poles of $\mu\mapsto a(\delta,\mu,\nu)$ in the half-plane $\{\mu\in\CC:\Re\mu<\frac{n}{2}\}$ counted with multiplicity are given by
    $$ \mu\in\nu-\tfrac{n}{2}-[\delta]-2\NN \qquad \mbox{and} \qquad \mu\in-\nu-\tfrac{n}{2}-[\delta]-2\NN. $$
    Therefore, the statement follows from the following lemma.
\end{proof}

\begin{lemma}\label{lem:VanishingOfQ}
    \begin{enumerate}
        \item\label{lem:VanishingOfQ1} For $\nu\in i\RR\setminus\{0\}$ the function $\mu\mapsto Q(\delta,\mu,\nu)$ vanishes at $\mu\in\pm\nu-\frac{n}{2}-[\delta]-2\NN$.
        \item\label{lem:VanishingOfQ2} For $\nu=0$ the function $\mu\mapsto Q(\delta,\mu,\nu)$ has a zero of order at least two at $\mu\in-\frac{n}{2}-[\delta]-2\NN$.
    \end{enumerate}
\end{lemma}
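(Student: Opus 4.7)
\emph{Plan.} The proof will combine three ingredients: the two functional equations of Proposition~\ref{Prop: functional equations}, the adjoint relation \eqref{eq:TonySymmetricH} for the Knapp--Stein operator $\mathbf{S}_{\eta,\nu}$, and the holomorphy of $\mathbf{A}_{\xi,\lambda}^{\eta,\nu}$ established in Proposition~\ref{prop:A-SBOholomorphic}.

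The first step is to observe the ``built-in vanishing'' of $\mathbf{S} \circ \mathbf{B}$. Reading \eqref{eq:Functionaleq: TB=A} as $\mathbf{S}_{(\xi_1,\eta,\xi_2),(\lambda_1,\nu,\lambda_2)} \circ \mathbf{B}_{\xi,\lambda}^{\eta,\nu} = \mathbf{A}_{\xi,\lambda}^{\eta,\nu}/n_\mathbf{B}(\xi,\lambda,\eta,\nu)$, and noticing that on the case~(+) locus $\mu - \nu + \tfrac{n}{2} + [\delta] = -2k$ the first $\Gamma$-factor of $n_\mathbf{B}(\xi,\lambda,\eta,\nu)$ has argument $-k$ and hence a simple pole, while $\mathbf{A}_{\xi,\lambda}^{\eta,\nu}$ stays holomorphic, we conclude that $\mathbf{S}_{(\xi_1,\eta,\xi_2),(\lambda_1,\nu,\lambda_2)} \circ \mathbf{B}_{\xi,\lambda}^{\eta,\nu}$ vanishes on the case~(+) locus. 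Symmetrically the corresponding composition with $-\nu$ vanishes on case~($-$).

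The second step is to rewrite $Q$ so that this vanishing is visible. Substituting $\mathbf{A}_{\xi,\lambda}^{\eta,-\nu}$ via \eqref{eq:Functionaleq: TB=A} (with $\nu\to -\nu$), applying the adjoint relation \eqref{eq:TonySymmetricH}, and using that $w_H((\xi_1,\eta,\xi_2),(\mu,-\nu,-\mu)) = ((\xi_1,\eta,\xi_2),(\mu,\nu,-\mu))$ because $\xi_1=\xi_2$ and $\lambda_1 = -\lambda_2$, one gets (exploiting also the symmetry $n_\mathbf{B}(\xi,\lambda,\eta,-\nu) = n_\mathbf{B}(\xi,\lambda,\eta,\nu)$ in the symmetric setting)
\[
Q(\delta,\mu,\nu) = n_\mathbf{B}(\xi,\lambda,\eta,\nu) \cdot \bigl( \mathbf{S}_{(\xi_1,\eta,\xi_2),(\lambda_1,\nu,\lambda_2)} \mathbf{B}_{\xi,\lambda}^{\eta,\nu} f \,\big|\, \mathbf{B}_{\xi,\lambda}^{\eta,-\nu} f' \bigr)_H.
\]
On case~(+) the inner product is an order-one zero (by Step 1) cancelling the simple pole of $n_\mathbf{B}$, so $Q$ is finite; to actually obtain $Q=0$ one observes that at the same parameter $\mathbf{B}_{\xi,\lambda}^{\eta,-\nu}$ collapses into the differential operator $\mathbf{D}_{\xi,\lambda}^{\eta,-\nu}$ of Remark~\ref{rem:DiffOpResidues} (because the \emph{second} alternative condition is satisfied). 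Feeding this into the formula for $Q$ and carrying out a residue-type computation of the leading Taylor coefficient of $\mathbf{S}\circ \mathbf{B}^{\nu}$ at the pole of $n_\mathbf{B}$, the resulting pairing should vanish. Case~($-$) follows by the $\nu \leftrightarrow -\nu$ symmetry.

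For part~(2), at $\nu=0$ and $\mu=-\tfrac{n}{2}-[\delta]-2k$ the two loci case~(+) and case~($-$) collide and \emph{both} conditions of Proposition~\ref{prop:Bvanishing} hold, forcing $\mathbf{B}_{\xi,\lambda}^{\eta,0} = 0$ there. Combining this ``direct'' vanishing with the case~(+)/case~($-$) vanishing already produced by Step~1 gives two independent sources of vanishing whose combined Taylor analysis yields a double zero of $Q$ at $\mu = -\tfrac{n}{2}-[\delta]-2k$, matching the double pole of $a(\delta,\mu,0)$.

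The main obstacle I anticipate is the final residue step in part~(1): the bare functional equations show only that $R := (\mathbf{S}\mathbf{B}^\nu f\mid \mathbf{B}^{-\nu} f')_H$ has a first-order zero on the case~(+) locus, which combined with the simple pole of $n_\mathbf{B}$ shows only that $Q$ is finite. Promoting this to an actual zero of $Q$ appears to require a concrete computation with the differential-operator representation of $\mathbf{B}^{\pm\nu}$ from Remark~\ref{rem:DiffOpResidues}, since a purely formal adjoint manipulation relating $R$ to $Q$ ends up circular.
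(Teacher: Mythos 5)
Your Steps~1 and~2 mirror what the paper does: combining the functional equation \eqref{eq:Functionaleq: TB=A} with the adjoint relation \eqref{eq:TonySymmetricH} yields $Q = n_\mathbf{B}\cdot\bigl(\mathbf{S}_{(\xi_1,\eta,\xi_2),(\lambda_1,\nu,\lambda_2)}\mathbf{B}_{\xi,\lambda}^{\eta,\nu}f\,\big|\,\mathbf{B}_{\xi,\lambda}^{\eta,-\nu}f'\bigr)_H$, and \eqref{eq:Functionaleq: TB=A} together with the holomorphy of $\mathbf{A}$ forces $\mathbf{S}\circ\mathbf{B}^{\nu}$ to vanish on the case~(+) locus. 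Your ``main obstacle'' paragraph is also an accurate diagnosis: the simple pole of $n_\mathbf{B}$ against the simple zero of the inner pairing only shows that $Q$ stays finite, and the $\mathbf{D}$-operator/residue route you gesture at does not obviously close this gap (nor is it what the paper does).

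The paper closes the gap by a bivariate Taylor expansion that is missing from your proposal. Write $z_1 = \mu-\nu+\tfrac{n}{2}+[\delta]+2k$ and $z_2 = \mu+\nu+\tfrac{n}{2}+[\delta]+2\ell$ for $k,\ell\in\NN$, and expand $\mathbf{B}_{\xi,\lambda}^{\eta,\nu} = \sum_{i,j\geq 0}z_1^i z_2^j B_{i,j}$ around $(z_1,z_2)=(0,0)$. Proposition~\ref{prop:Bvanishing} gives $B_{0,0}=0$, and the functional equation forces the Taylor coefficients of $\mathbf{S}\circ\mathbf{B}^{\nu}$ with $i=0$ or $j=0$ to vanish, so $\mathbf{S}\circ\mathbf{B}^{\nu}$ is divisible by $z_1z_2$. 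Crucially --- and this is the ingredient your proposal does not have --- the same applies to $\mathbf{B}^{-\nu}$, so the singular Gammas in $\mathbf{A}^{-\nu}=n_\mathbf{B}\cdot\mathbf{S}^{(-\nu)}\circ\mathbf{B}^{-\nu}$ cancel against that double zero, producing a family $\widetilde{\mathbf{B}}$, \emph{holomorphic} at $(0,0)$, with $\mathbf{A}^{-\nu}=\mathbf{S}^{(-\nu)}\circ\widetilde{\mathbf{B}}$. Passing $\mathbf{S}$ through the pairing then gives $Q=(\mathbf{S}\mathbf{B}^{\nu}f\,|\,\widetilde{\mathbf{B}}f')_H=\sum_{i,j\geq 1}z_1^i z_2^j(\,\cdots\,)$ with holomorphic coefficients, so $Q$ vanishes on $z_1=0$ and on $z_2=0$ (part~(1)); choosing $k=\ell$ turns $\nu=0$ into $z_1=z_2$, so the series begins at total degree two, giving part~(2). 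In short, your factorization $Q=n_\mathbf{B}\cdot R$ is a dead end because it pulls the singular normalization out in front as a scalar, re-introducing the pole; the Gammas must instead be absorbed into the operator-valued $\mathbf{A}^{-\nu}$-side where they are cancelled by the double vanishing of $\mathbf{S}^{(-\nu)}\circ\mathbf{B}^{-\nu}$.
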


\begin{proof}
    For arbitrary $k,\ell\in\NN$ we write $(z_1,z_2)=(\mu-\nu+\frac{n}{2}+[\delta]+2k,\mu+\nu+\frac{n}{2}+[\delta]+2\ell)$ and expand $\mathbf{B}_{\xi,\lambda}^{\eta,\nu}$ around $(z_1,z_2)=(0,0)$ as
    $$ \mathbf{B}_{\xi,\lambda}^{\eta,\nu} = \sum_{i,j=0}^\infty z_1^i z_2^j B_{i,j}, $$
    where $B_{i,j}\in\Hom(C^\infty(K_{2n}\times_{M_G\cap K_{2n}}\xi),C^\infty(K_{2n-1}\times_{M_H\cap K_{2n-1}}\eta))$ (in the compact picture). By Proposition~\ref{prop:Bvanishing} we have $B_{0,0}=0$. Moreover, applying \eqref{eq:Functionaleq: TB=A} yields
    \begin{multline*}
        \sum_{i,j=0}^\infty z_1^iz_2^j\cdot\mathbf{S}_{(\delta,\eta,\delta),(\mu,\nu,-\mu)} \circ B_{i,j} = \mathbf{S}_{(\delta,\eta,\delta),(\mu,\nu,-\mu)}\circ \mathbf{B}_{\xi,\lambda}^{\eta,\nu}=\frac{1}{n_\mathbf{B}(\xi,\lambda,\eta,\nu)} \mathbf{A}_{\xi,\lambda}^{\eta,\nu}\\
        = \frac{1}{\Gamma(\frac{z_1}{2}-k)\Gamma(\frac{z_2}{2}-\ell)} \mathbf{A}_{\xi,\lambda}^{\eta,\nu},
    \end{multline*}
    so putting $z_1=0$ resp. $z_2=0$ yields $\mathbf{S}_{(\delta,\eta,\delta),(\mu,\nu,-\mu)}\circ B_{0,j}=0$ for all $j\in\NN$ resp. $\mathbf{S}_{(\delta,\eta,\delta),(\mu,\nu,-\mu)}\circ B_{i,0}=0$ for all $i\in\NN$. This allows us to rewrite the identity as
    $$ \mathbf{A}_{\xi,\lambda}^{\eta,\nu} = \mathbf{S}_{(\delta,\eta,\delta),(\mu,\nu,-\mu)}\circ\left(z_1\Gamma(\tfrac{z_1}{2}-k)\cdot z_2\Gamma(\tfrac{z_2}{2}-\ell)\sum_{i,j=1}^\infty z_1^{i-1}z_2^{j-1}\cdot B_{i,j}\right). $$
    Note that the expression in parentheses is holomorphic at $(z_1,z_2)=(0,0)$. The same argument works for $-\nu$ instead of $\nu$ if we interchange $z_1$ and $z_2$ as well as $k$ and $\ell$, so we can write
    $$ \mathbf{A}_{\xi,\lambda}^{\eta,-\nu} = \mathbf{S}_{(\delta,\eta,\delta),(\mu,-\nu,-\mu)}\circ\widetilde{\mathbf{B}}_{z_1,z_2}, $$
    where $\widetilde{\mathbf{B}}_{z_1,z_2}$ is holomorphic in $(z_1,z_2)=(0,0)$. Using \eqref{eq:TonySymmetricH}, it follows that
    \begin{multline*}
    Q(\delta,\mu,\nu) = (\mathbf{B}_{\xi,\lambda}^{\eta,\nu}f\,|\,\mathbf{A}_{\xi,\lambda}^{\eta,-\nu}f)_H = \sum_{i,j=0}^\infty z_1^i z_2^j\,( B_{i,j}\,|\,\mathbf{S}_{(\delta,\eta,\delta),(\mu,\nu,-\mu)}\circ\widetilde{\mathbf{B}}_{z_1,z_2})_H\\
    = \sum_{i,j=0}^\infty z_1^i z_2^j\,( \mathbf{S}_{(\delta,\eta,\delta),(\mu,\nu,-\mu)}\circ B_{i,j}\,|\,\widetilde{\mathbf{B}}_{z_1,z_2})_H,
    \end{multline*}
    and since $\mathbf{S}_{(\delta,\eta,\delta),(\mu,\nu,-\mu)} \circ B_{i,0}=\mathbf{S}_{(\delta,\eta,\delta),(\mu,\nu,-\mu)} \circ B_{0,j}=0$ for all $i,j\in\NN$ this can be simplified to
    $$ Q(\delta,\mu,\nu) = \sum_{i,j=1}^\infty z_1^i z_2^j\,( \mathbf{S}_{(\delta,\eta,\delta),(\mu,\nu,-\mu)}\circ B_{i,j}\,|\,\widetilde{\mathbf{B}}_{z_1,z_2})_H. $$
    Now, to prove \eqref{lem:VanishingOfQ1} we observe that $\mu=\nu-\frac{n}{2}-[\delta]-2k$ resp. $\mu=-\nu-\frac{n}{2}-[\delta]-2\ell$ corresponds to $z_1=0$ resp. $z_2=0$. But since every term in the sum above contains at least one power of $z_1$ and $z_2$, we find $Q(\delta,\mu,\nu)=0$ in these cases. To show \eqref{lem:VanishingOfQ2} we put $k=\ell$, then $\nu=0$ corresponds to $z_1=z_2$. But for $z_1=z_2=\mu+\frac{n}{2}+[\delta]+2k$ we have
    $$ Q(\delta,\mu,\nu) = \sum_{i,j=1}^\infty(\mu+\tfrac{n}{2}+[\delta]+2k)^{i+j}\,( \mathbf{S}_{(\delta,\eta,\delta),(\mu,\nu,-\mu)}\circ B_{i,j}\,|\,\widetilde{\mathbf{B}}_{z_1,z_2})_H $$
    which has a zero of order at least two at $\mu=-\frac{n}{2}-[\delta]-2k$.
\end{proof}

In order to conclude that the right hand side of \eqref{eq:PlancherelWithAandQ} is holomorphic in $\mu$ in the right half-plane $\{\mu\in\CC:\Re\mu<\frac{n}{2}\}$, we need to ensure that the integrals converge for all such parameters. For this, we show that $a(\delta,\mu,\nu)Q(\delta,\mu,\nu)$ is rapidly decreasing along $\nu\in i\RR$.

\begin{theorem}\label{thm:DecayOfBinNu}
    Fix $\xi\in(\ZZ/2\ZZ)^2$, $\eta\in\ZZ/2\ZZ$ and $\lambda\in\CC^2$. For every $R>0$ and $N\in\NN$ there exists a continuous seminorm $q$ on $\pi_{\xi,\lambda}$ such that
    $$ \|B_{\xi,\lambda}^{\eta,\nu}f\|_{L^\infty(K_{2n-1})} \leq q(f)(1+|\Im\nu|)^{-N} $$
    for all $f\in\pi_{\xi,\lambda}$ and $\nu\in\CC$ with $|\Re\nu|\leq R$ and $|\Im\nu|$ sufficiently large.
\end{theorem}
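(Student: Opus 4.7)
The plan is to interpret $B_{\xi,\lambda}^{\eta,\nu}f(h)$ as a two-sided Laplace transform in $\nu$ and combine Fourier decay with an analytic-continuation argument. Starting from the compact-picture formula \eqref{eq:BinKcoordinates}, splitting at $\theta=0$ and absorbing signs reduces the task to bounding
\[
I_h(\nu) = \int_0^{\pi/2}(\sin\theta)^{A-1}(\cos\theta)^{B-1}\widetilde{F}_h(\theta)\,d\theta, \qquad A=\lambda_1-\nu+\tfrac{n}{2},\;\; B=\nu-\lambda_2+\tfrac{n}{2},
\]
where $\widetilde{F}_h$ is smooth on $[0,\pi/2]$ with all $C^k$-seminorms bounded uniformly in $h\in K_{2n-1}$ by continuous seminorms of $f$. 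The change of variable $u=\log\tan\theta$ converts this into
\[
I_h(\nu) = \int_{-\infty}^{\infty}G_{h,\lambda}(u)\,e^{-u\nu}\,du, \qquad G_{h,\lambda}(u) = \frac{e^{u(\lambda_1+n/2)}}{(1+e^{2u})^{(\lambda_1-\lambda_2+n)/2}}\,\widetilde{F}_h(\arctan e^u).
\]

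For $\sigma=\Re\nu$ in the convergence strip $(\Re\lambda_2-\tfrac{n}{2},\Re\lambda_1+\tfrac{n}{2})$, both $G_{h,\lambda}(u)e^{-u\sigma}$ and all its $u$-derivatives decay exponentially as $u\to\pm\infty$, placing it in $\mathcal{S}(\RR)$ with Schwartz seminorms uniformly controlled by continuous seminorms of $f$. Thus $I_h(\sigma+i\tau)=\widehat{G_{h,\lambda,\sigma}}(\tau)$ is a Schwartz function of $\tau$, yielding the desired rapid decay on this strip. For $\sigma$ outside the strip but with $|\sigma|\leq R$, subtract from $\widetilde{F}_h$ Taylor polynomials of order $K$ in $\sin\theta$ near $\theta=0$ and in $\cos\theta$ near $\theta=\pi/2$. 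Each Taylor monomial contributes an explicit Euler Beta integral equal to $\tfrac{1}{2}\Gamma(\tfrac{A+k}{2})\Gamma(\tfrac{B+j}{2})/\Gamma(\tfrac{A+B+k+j}{2})$; since $A+B=\lambda_1-\lambda_2+n$ is independent of $\nu$, Stirling's formula bounds these terms by a polynomial in $|\Im\nu|$ times $e^{-\pi|\Im\nu|/2}$, which is already rapid decay. The Taylor remainder vanishes to order $K$ at each endpoint, so its associated Laplace data lies in $\mathcal{S}(\RR)$ on the widened strip $(\Re\lambda_2-\tfrac{n}{2}-K,\Re\lambda_1+\tfrac{n}{2}+K)$, and the Fourier argument applies there. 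Choosing $K$ large enough relative to $R$ covers $|\Re\nu|\leq R$.

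The main technical point is to verify that the Schwartz seminorms of the Laplace data for the Taylor remainder remain controlled uniformly in $h\in K_{2n-1}$ and $\sigma\in[-R,R]$ by a single continuous seminorm of $f$. This follows from quantitative Taylor estimates together with the continuity of the map $f\mapsto\widetilde{F}_h$ from the Fréchet space $\pi_{\xi,\lambda}$ into $C^\infty([0,\pi/2])$, which is immediate from the definition of $\widetilde{F}_h$ as a combination of evaluations of $f$ and its invariant derivatives.
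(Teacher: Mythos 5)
Your proof is correct in spirit and takes a genuinely different route from the paper. The paper's proof (Appendix~\ref{app:ProofOfEstimate}) is organized around two Bernstein--Sato identities in the compact picture,
\[
B_{\xi,\lambda}^{\eta,\nu}\circ D_1 = (\lambda_1-\nu+\tfrac{n}{2}-1)B_{\xi-e_1,\lambda-e_1}^{\eta,\nu},
\qquad
B_{\xi,\lambda}^{\eta,\nu}\circ D_2 = (\nu-\lambda_2+\tfrac{n}{2}-1)B_{\xi+e_2,\lambda+e_2}^{\eta,\nu},
\]
which are used twice: first to shift $\lambda$ so that $[-R,R]$ lies in the strip of absolute convergence (where one gets the trivial $L^\infty$ bound for $N=0$), and then to peel off one factor of $(1+|\Im\nu|)$ at a time by induction on $N$. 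You instead recognize $B_{\xi,\lambda}^{\eta,\nu}f(h)$ as a two-sided Laplace transform after the logarithmic substitution $u=\log\tan\theta$, obtain rapid decay in $\Im\nu$ on a vertical line from the fact that the Fourier transform of a Schwartz function is Schwartz, and then push beyond the convergence strip by subtracting Taylor polynomials whose contributions are exact Beta integrals controlled by Stirling's formula. These are two faces of the same phenomenon --- the Beta integral recurrence $\Gamma(z+1)=z\Gamma(z)$ that controls your monomial terms is precisely the Bernstein--Sato relation in disguise --- but the organization is quite different: the paper stays entirely within the operator calculus and is essentially algebraic, while your argument is classical Mellin/Paley--Wiener analysis and makes the \emph{reason} for the rapid decay (smoothness of $G_{h,\lambda}$ in $u$) immediately transparent.

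Two small points deserve attention. First, your split into ``$\sigma$ in the strip'' versus ``$\sigma$ outside the strip'' is not quite uniform: as $\sigma$ approaches the boundary of the strip from inside, the exponential decay rate of $G_{h,\lambda}(u)e^{-u\sigma}$ degenerates and the Schwartz seminorms blow up. The clean fix, which you already gesture at with ``choosing $K$ large enough relative to $R$,'' is to always perform the Taylor subtraction with $K$ chosen so that the widened strip contains $[-R-1,R+1]$, and apply the Fourier argument to the remainder only; then the bound is uniform in $\sigma\in[-R,R]$. Second, subtracting Taylor polynomials at both endpoints simultaneously is slightly delicate because a polynomial in $\sin\theta$ matching $\widetilde{F}_h$ at $\theta=0$ disturbs the Taylor data at $\theta=\pi/2$. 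A clean implementation is Hermite interpolation: since $s=\sin\theta$ is a diffeomorphism $[0,\pi/2]\to[0,1]$ and $1-\sin\theta=\cos^2\theta/(1+\sin\theta)$, a single polynomial $P(s)$ of degree $2K-1$ can be chosen to match $\widetilde{F}_h$ to order $K$ at both $s=0$ and $s=1$, the monomials $s^k$ still give Beta integrals, and the remainder vanishes to order $K$ in $\sin\theta$ at $0$ and to order $2K$ in $\cos\theta$ at $\pi/2$. With these adjustments the argument is complete.
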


The proof is rather technical and has for the sake of readability been moved to Appendix~\ref{app:ProofOfEstimate}.

\begin{corollary}
    Fix $\mu\in\RR$ and $\delta\in\ZZ/2\ZZ$. For every $N\in\NN$ there exists a constant $C>0$ such that
    \begin{equation}
        \left|a(\delta,\mu,\nu)Q(\delta,\mu,\nu)\right| \leq C(1+|\nu|)^{-N} \qquad \mbox{for all }\nu\in i\RR.\label{eq:DecayOfaQinNu}
    \end{equation}
\end{corollary}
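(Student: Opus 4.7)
The plan is to reduce the estimate to Theorem~\ref{thm:DecayOfBinNu} by expressing $a(\delta,\mu,\nu)Q(\delta,\mu,\nu)$ entirely in terms of the unnormalized operator $B_{\xi,\lambda}^{\eta,\nu}$. First I would invoke the second functional equation of Proposition~\ref{Prop: functional equations} with our standing assumptions $\xi_1=\xi_2$ and $\lambda_1=-\lambda_2=\mu$ (so that $w_G(\xi,\lambda)=(\xi,-\lambda)$ and both the sign factor $(-1)^{(\eta+\xi_2)(\xi_1+\xi_2)}$ and the $[\xi_1+\xi_2]$ term vanish), and with $\nu$ replaced by $-\nu$. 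Since $-w_G\lambda=\lambda$ in this setting, the identity reads
$$\mathbf{A}_{\xi,\lambda}^{\eta,-\nu}=\frac{\Gamma(\tfrac{n}{2}-\mu)}{\sqrt{\pi}}\,\mathbf{B}_{\xi,-\lambda}^{\eta,-\nu}\circ\mathbf{T}_{\xi,\lambda}.$$
Substituting into $Q(\delta,\mu,\nu)=(\mathbf{B}_{\xi,\lambda}^{\eta,\nu}f\,|\,\mathbf{A}_{\xi,\lambda}^{\eta,-\nu}f')_H$ and multiplying by $a(\delta,\mu,\nu)=n_{\mathbf B}(\xi,\lambda,\eta,\nu)\,n_{\mathbf B}(\xi,-\lambda,\eta,-\nu)$, the two normalization factors absorb those of the two $\mathbf{B}$ operators, yielding the clean identity
$$a(\delta,\mu,\nu)Q(\delta,\mu,\nu)=\frac{\Gamma(\tfrac{n}{2}-\mu)}{\sqrt{\pi}}\bigl(B_{\xi,\lambda}^{\eta,\nu}f\,\bigm|\,B_{\xi,-\lambda}^{\eta,-\nu}F'\bigr)_H,$$
where $F':=\mathbf{T}_{\xi,\lambda}f'\in\pi_{\xi,-\lambda}$ is a fixed section independent of $\nu$.

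Next I would bound the $H$-pairing using the compact picture: for $\nu\in i\RR$, both $B_{\xi,\lambda}^{\eta,\nu}f$ and $B_{\xi,-\lambda}^{\eta,-\nu}F'$ are continuous sections over the compact manifold $K_{2n-1}/M_H$, so
$$|a(\delta,\mu,\nu)Q(\delta,\mu,\nu)|\leq\frac{|\Gamma(\tfrac{n}{2}-\mu)|}{\sqrt{\pi}}\,\vol(K_{2n-1}/M_H)\,\|B_{\xi,\lambda}^{\eta,\nu}f\|_{L^\infty}\,\|B_{\xi,-\lambda}^{\eta,-\nu}F'\|_{L^\infty}.$$
Applying Theorem~\ref{thm:DecayOfBinNu} with $R=0$ to each of the two factors (using continuous seminorms associated to the fixed sections $f$ and $F'$) gives, for any prescribed $N$, decay faster than $(1+|\nu|)^{-N}$ for each sup-norm whenever $|\nu|$ is sufficiently large; the bounded range of $\nu$ is absorbed into the constant by continuity. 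The product of the two bounds delivers the desired estimate \eqref{eq:DecayOfaQinNu}.

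The bulk of the work is encapsulated in Theorem~\ref{thm:DecayOfBinNu}, whose technical proof is deferred to Appendix~\ref{app:ProofOfEstimate}; the present corollary follows by a mostly routine manipulation. The only minor subtlety is that $\Gamma(\tfrac{n}{2}-\mu)$ has poles at $\mu\in\{\tfrac{n}{2},\tfrac{n}{2}+1,\ldots\}$, which is an artifact of the chosen reduction via $\mathbf{T}_{\xi,\lambda}$. These values lie on or outside the unitary range of $\mu$ relevant to the branching problem (cf.\ Theorem~\ref{thm:BigHermitianFormPositiveSemidefinite}), and for the boundary case $\mu=\tfrac{n}{2}$ the genuine bound follows from a limit argument combined with the joint holomorphicity of $a\cdot Q$ in $\Re\mu<\tfrac{n}{2}$ established in Proposition~\ref{prop:AnalyticCont}.
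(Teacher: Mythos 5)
Your proof is correct and follows essentially the same strategy as the paper: use the functional equation \eqref{eq:Functionaleq: BT=A} to transfer the decay estimate of Theorem~\ref{thm:DecayOfBinNu} to the pairing, absorb the $\mathbf{B}$-normalizations into $a(\delta,\mu,\nu)$, and handle bounded $\nu$ by continuity. Your derivation of the clean identity $a(\delta,\mu,\nu)Q(\delta,\mu,\nu)=\frac{\Gamma(n/2-\mu)}{\sqrt{\pi}}\bigl(B_{\xi,\lambda}^{\eta,\nu}f\,\big|\,B_{\xi,-\lambda}^{\eta,-\nu}F'\bigr)_H$ with $F'=\mathbf{T}_{\xi,\lambda}f'$ fixed is in fact a bit more transparent than the paper's formulation, which estimates the pairing $(B_{\xi,\lambda}^{\eta,\nu}f\,|\,A_{\xi,-w_G\lambda}^{\eta,-\nu}f')_H$ directly and then asserts that the product of normalizations equals $a$ ``up to a constant depending only on $\mu$'' --- literally that ratio has polynomial growth in $\nu$, which is harmless since $N$ is arbitrary, but your route avoids the imprecision altogether. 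Two small points: Theorem~\ref{thm:DecayOfBinNu} is stated for $R>0$, so you should take $R=1$ (say) rather than $R=0$ --- trivially sufficient since $|\Re\nu|=0$ for $\nu\in i\RR$; and your caveat about the poles of $\Gamma(n/2-\mu)$ at $\mu\in\frac{n}{2}+\NN$ is well taken, but since the corollary is only invoked for $\Re\mu<\frac{n}{2}$ (and your limit argument via Proposition~\ref{prop:AnalyticCont} covers the boundary), this does not affect the result.
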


\begin{proof}
    First note that using \eqref{eq:Functionaleq: BT=A} the statement of Theorem~\ref{thm:DecayOfBinNu} holds in the same way for $A_{\xi,\lambda}^{\eta,\nu}$. It follows that \eqref{eq:DecayOfaQinNu} holds for the pairing $(B_{\xi,\lambda}^{\eta,\nu}f\,|\,A_{\xi,-w_G\lambda}^{\eta,-\nu}f')_H$ and $|\Im\nu|$ sufficiently large. Since the normalization of $B_{\xi,\lambda}^{\eta,\nu}$ and $A_{\xi,-w_G\lambda}^{\eta,-\nu}$ is (up to a constant depending only on $\mu$) the same as $a(\delta,\mu,\nu)$, the estimate \eqref{eq:DecayOfaQinNu} follows for $|\Im\nu|$ sufficiently large. But on a bounded set of $\nu\in i\RR$, the left hand side of \eqref{eq:DecayOfaQinNu} is continuous and hence bounded above, and the right hand side is bounded below by a positive constant. Hence, by choosing $C>0$ sufficiently large, we can drop the assumption that $|\Im\nu|$ is sufficiently large and the proof is complete.
\end{proof}

The previous results allow us to conclude that \eqref{eq:PlancherelWithAandQ} is valid for all $\mu\in\CC$ with $\Re\mu<\frac{n}{2}$.

\subsection{Branching laws}

We now specialize \eqref{eq:PlancherelWithAandQ} to those parameters $\lambda$ for which the left hand side is positive semidefinite and use it to read off the explicit branching laws for the corresponding irreducible unitary representations of $G$ restricted to $H$. We first note that it is sufficient to integrate over $i\RR_+$ because of the following:

\begin{lemma}\label{lemma:QisEven}
The function $\nu\mapsto Q(\delta,\mu,\nu)$ is even. 
\end{lemma}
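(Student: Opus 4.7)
The plan is to prove $Q(\delta,\mu,-\nu)=Q(\delta,\mu,\nu)$ by combining the two functional equations of Proposition~\ref{Prop: functional equations} with the self-adjointness \eqref{eq:TonySymmetricH} of $\mathbf{S}$. The basic observation is that under the restriction $\lambda_1=-\lambda_2=\mu$ and $\xi_1=\xi_2$, the normalizing factor
\[
n_\mathbf{B}(\xi,\lambda,\eta,\nu)=\Gamma\Bigl(\tfrac{\mu-\nu+\tfrac{n}{2}+[\delta]}{2}\Bigr)\Gamma\Bigl(\tfrac{\mu+\nu+\tfrac{n}{2}+[\delta]}{2}\Bigr)
\]
is invariant under $\nu\mapsto-\nu$ (the two Gamma factors simply permute), and moreover $-w_G\lambda=\lambda$, so the definition of $Q$ simplifies to $Q(\delta,\mu,\nu) = (\mathbf{B}_{\xi,\lambda}^{\eta,\nu} f\mid \mathbf{A}_{\xi,\lambda}^{\eta,-\nu} f')_H$.

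The first step is to apply \eqref{eq:Functionaleq: TB=A} to write $\mathbf{A}_{\xi,\lambda}^{\eta,-\nu}=n_\mathbf{B}(\xi,\lambda,\eta,-\nu)\,\mathbf{S}_{(\xi_1,\eta,\xi_1),(\mu,-\nu,-\mu)}\,\mathbf{B}_{\xi,\lambda}^{\eta,-\nu}$ and then move the Knapp--Stein operator to the first argument of the pairing using \eqref{eq:TonySymmetricH}; since $w_H$ applied to negated parameters converts $(\mu,-\nu,-\mu)$ into $(\mu,\nu,-\mu)$, this produces $\mathbf{S}_{(\xi_1,\eta,\xi_1),(\mu,\nu,-\mu)}\circ\mathbf{B}_{\xi,\lambda}^{\eta,\nu}$ on the left. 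Reapplying \eqref{eq:Functionaleq: TB=A} in reverse to consolidate this composition back into $\mathbf{A}_{\xi,\lambda}^{\eta,\nu}/n_\mathbf{B}(\xi,\lambda,\eta,\nu)$ and invoking the evenness of $n_\mathbf{B}$ yields the alternative form
\[
Q(\delta,\mu,\nu) = (\mathbf{A}_{\xi,\lambda}^{\eta,\nu} f\mid \mathbf{B}_{\xi,\lambda}^{\eta,-\nu} f')_H.
\]

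The second step is to apply \eqref{eq:Functionaleq: BT=A} to rewrite $\mathbf{A}_{\xi,\lambda}^{\eta,\nu} = \tfrac{\Gamma(n/2-\mu)}{\sqrt\pi}\mathbf{B}_{\xi,-\lambda}^{\eta,\nu}\circ\mathbf{T}_{\xi,\lambda}$ and set $g=\mathbf{T}_{\xi,\lambda}f\in\pi_{\xi,-\lambda}$, bringing $Q$ into the symmetric form
\[
Q(\delta,\mu,\nu) = \tfrac{\Gamma(n/2-\mu)}{\sqrt\pi}\bigl(\mathbf{B}_{\xi,-\lambda}^{\eta,\nu} g\,\big|\,\mathbf{B}_{\xi,\lambda}^{\eta,-\nu} f'\bigr)_H,
\]
which is, up to a $\nu$-independent constant, the $\nu$-th integrand in the unitary Plancherel formula \eqref{eq: Plancherel formula} applied to the pair $(g,f')\in\pi_{\xi,-\lambda}\times\pi_{\xi,\lambda}$. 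The proof is completed by showing that this pointwise integrand is even in $\nu$: using the compact-coordinate formula \eqref{eq:BinKcoordinates} for $\mathbf{B}$, the substitution $\theta\mapsto \tfrac{\pi}{2}-\theta$ interchanges $\sin\theta$ with $\cos\theta$ and hence swaps the two exponents $\mu\mp\nu+\tfrac{n}{2}-1$; the resulting $K$-element $\iota(k_{\pi/2})$ can be absorbed into $g$ and $f'$ via their smoothness, matching the Iwasawa-type decomposition of $k_{\pi/2-\theta}=k_{\pi/2}k_{-\theta}$.

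The main obstacle will be the careful bookkeeping of the modular factor $|s|^{1-2\mu/n}$ (and its $A_G$-character twin) arising when the element $\iota(k_{\pi/2})$ is moved past $g\in\pi_{\xi,-\lambda}$ respectively $f'\in\pi_{\xi,\lambda}$: the $\mu$-dependent contributions of these two vectors must cancel exactly because $g$ and $f'$ live in opposite degenerate series, so the constraint $\lambda_1=-\lambda_2=\mu$ is essential for the identity to reduce cleanly to $Q(\nu)=Q(-\nu)$.
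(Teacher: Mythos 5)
Your Step~1 is a correct computation and it essentially reproduces the paper's own proof: rewrite $\mathbf{A}^{\eta,-\nu}_{\xi,\lambda}$ via \eqref{eq:Functionaleq: TB=A}, move the Knapp--Stein factor $\mathbf{S}$ through the pairing using the self-adjointness \eqref{eq:TonySymmetricH}, reconsolidate via \eqref{eq:Functionaleq: TB=A}, and use that $n_{\mathbf B}(\xi,(\mu,-\mu),\eta,\nu)$ is even in $\nu$, arriving at
\[
Q(\delta,\mu,\nu)=\bigl(\mathbf{A}^{\eta,\nu}_{\xi,\lambda}f\,\big|\,\mathbf{B}^{\eta,-\nu}_{\xi,\lambda}f'\bigr)_H.
\]
You should have stopped here. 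Replacing $\nu$ by $-\nu$ in the defining formula $Q(\delta,\mu,\nu)=(\mathbf{B}^{\eta,\nu}_{\xi,\lambda}f\mid\mathbf{A}^{\eta,-\nu}_{\xi,\lambda}f')_H$ gives $Q(\delta,\mu,-\nu)=(\mathbf{B}^{\eta,-\nu}_{\xi,\lambda}f\mid\mathbf{A}^{\eta,\nu}_{\xi,\lambda}f')_H$, and by symmetry of $(\cdot\mid\cdot)_H$ this equals $(\mathbf{A}^{\eta,\nu}_{\xi,\lambda}f'\mid\mathbf{B}^{\eta,-\nu}_{\xi,\lambda}f)_H$, which coincides with the displayed expression for $Q(\delta,\mu,\nu)$ once $f'=f$ — exactly the case the paper's proof treats. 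The paper stops one step earlier, at $(\mathbf{S}^{\nu}\mathbf{B}^{\nu}f\mid\mathbf{B}^{-\nu}f)_H$, but the mechanism is the same.

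Step~2 is both unnecessary and contains a genuine gap. After setting $g=\mathbf{T}_{\xi,\lambda}f$ you are reduced to proving evenness of $(\mathbf{B}^{\eta,\nu}_{\xi,-\lambda}g\mid\mathbf{B}^{\eta,-\nu}_{\xi,\lambda}f')_H$ and propose to do so by the substitution $\theta\mapsto\tfrac\pi2-\theta$ inside \eqref{eq:BinKcoordinates}, asserting that the resulting $\iota(k_{\pi/2})$ ``can be absorbed into $g$ and $f'$ via their smoothness.'' This fails concretely: $\iota(k_{\pi/2})$ is the rotation in the $(e_n,e_{2n})$-plane, so it lies neither in $H$ (it mixes the $n$-th row with the $2n$-th) nor in $P_G$ (its $(2n,n)$-entry is nonzero). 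Hence it cannot be shifted into the outer $H/P_H$-integral of the pairing, nor pushed through the $P_G$-equivariance of $g$ and $f'$. It corresponds to the \emph{other} embedding $hP_H\mapsto h\iota(k_{\pi/2})P_G$ appearing in Remark~\ref{rem:DiffOpResidues} for the residue operator $\mathbf{D}^{\eta,\nu}_{\xi,\lambda}$, and that embedding is not $H$-conjugate to the standard one. The ``modular-factor bookkeeping'' you flag in your final paragraph is not a routine verification but exactly the place where the argument breaks; the $\mu$-dependent factors do not cancel, because there is no factor to cancel — the substitution simply does not return you to an expression of the same form. Drop Step~2 and close the proof from Step~1 as above.
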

\begin{proof}
Using the functional equation \eqref{eq:Functionaleq: TB=A} write 
\[
\mathbf{A}_{\xi,(\mu,-\mu)}^{\eta,-\nu}=\Gamma(\tfrac{\mu-\nu+\frac{n}{2}+\delta}{2})\Gamma(\tfrac{\nu-\mu+\frac{n}{2}+\delta}{2})\mathbf{S}_{(\xi_1,\eta,\xi_1),(\mu,-\nu,-\mu)}\circ \mathbf{B}_{\xi,(\mu,-\mu)}^{\eta,-\nu}.
\]
So with \eqref{eq:TonySymmetricH} we get that
\begin{align*}
\frac{Q(\delta,\mu,\nu)}{\Gamma(\frac{\mu-\nu+\frac{n}{2}+\delta}{2})\Gamma(\frac{\nu-\mu+\frac{n}{2}+\delta}{2})}&=\big (\mathbf{B}_{\xi,(\mu,-\mu)}^{\eta,\nu}f\,\big|\,\mathbf{S}_{(\xi_1,\eta,\xi_1),(\mu,-\nu,-\mu)}\circ \mathbf{B}_{\xi,(\mu,-\mu)}^{\eta,-\nu}f\big)_H\\
&=\big (\mathbf{S}_{(\xi_1,\eta,\xi_1),(\mu,\nu,-\mu)}\circ\mathbf{B}_{\xi,(\mu,-\mu)}^{\eta,\nu}f\,\big|\, \mathbf{B}_{\xi,(\mu,-\mu)}^{\eta,-\nu}f\big)_H,
\end{align*}
the result then follows from $(\,\cdot \,|\,\cdot \,)_H$ being symmetric.
\end{proof}

Applying the lemma to \eqref{eq:PlancherelWithAandQ} and using \eqref{eq:Functionaleq: TB=A} we obtain for all $\xi_1=\xi_2\in\ZZ/2\ZZ$ and $\lambda_1=-\lambda_2\in\{\mu\in\CC:\Re\mu<\frac{n}{2}\}$:
\begin{multline*}
    ( f\,|\, \mathbf{T}_{\xi,\lambda}f')_G = \frac{1}{2\sqrt{\pi}\Gamma(\frac{\lambda_2-\lambda_1+n}{2})}\sum_{\eta=0}^1\int_{i\RR_+} n_\mathbf{B}(\xi,\lambda,\eta,\nu)^2n_\mathbf{B}(\xi,-\lambda,\eta,-\nu)\\
    \times\big(\mathbf{B}_{\xi,\lambda}^{\eta,\nu}f\,\big|\,\mathbf{S}_{(\xi_1,\eta,\xi_2),(\lambda_1,-\nu,\lambda_2)}\circ\mathbf{B}_{\xi,\lambda}^{\eta,-\nu}f'\big)_H\,d\nu.
\end{multline*}
For $\lambda_1=-\lambda_2\in(-\infty,\frac{n}{2})$ and $f'=\overline{f}$ with $f\in\pi_{\xi,\lambda}$, the left hand side becomes the invariant Hermitian form of Theorem~\ref{thm:BigHermitianFormPositiveSemidefinite} and the right hand side becomes an integral over the invariant Hermitian form of Theorem~\ref{thm:SmallInvariantHermitianFormPositiveSemidefinite} evaluated at $\mathbf{B}_{\xi,\lambda}^{\eta,\nu}f$:
\begin{equation}
    \langle f,f\rangle_{\xi,\lambda} = \frac{1}{2\sqrt{\pi}\Gamma(\frac{\lambda_2-\lambda_1+n}{2})}\sum_{\eta=0}^1\int_{i\RR_+} n_\mathbf{B}(\xi,\lambda,\eta,\nu)^2n_\mathbf{B}(\xi,-\lambda,\eta,-\nu)\big\langle \mathbf{B}_{\xi,\lambda}^{\eta,\nu}f,\mathbf{B}_{\xi,\lambda}^{\eta,\nu}f\big\rangle _{\eta,\nu}\,d\nu.\label{eq:FinalPlancherelFormula}
\end{equation}
Note that $n_\mathbf{B}(\xi,\lambda,\eta,\nu)^2n_\mathbf{B}(\xi,-\lambda,\eta,-\nu)>0$ for all $\lambda\in\RR^2$ with $\lambda_1+\lambda_2=0$ and $\nu\in i\RR_+$. Moreover, by Proposition~\ref{prop:B-surjective}, the operator $\mathbf{B}_{\xi,\lambda}^{\eta,\nu}$ is surjective under the same assumptions. This immediately implies the first part of the following statement:

\begin{theorem}\label{thm:BranchingLaws}
    For $\xi_1=\xi_2$ and
    $$ \lambda_1=-\lambda_2\in\{\ldots,-\tfrac{5}{2},-\tfrac{3}{2},-\tfrac{1}{2}\}\cup(-\tfrac{1}{2},\tfrac{1}{2})\cup\{\tfrac{1}{2},1,\ldots,\tfrac{n-1}{2}\} $$ we have the following branching law for the unitary completion of $\pi_{\xi,\lambda}/\ker(\mathbf{T}_{\xi,\lambda})$:
    \begin{align*}
        \big[\pi_{\xi,\lambda}/\ker(\mathbf{T}_{\xi,\lambda})\big]\Big|_H &\simeq \bigoplus_{\eta\in\ZZ/2\ZZ}\int_{i\RR_+}^\oplus\big(\tau_{(\xi_1,\eta,\xi_2),(\lambda_1,\nu,\lambda_2)}/\ker(\mathbf{S}_{(\xi_1,\eta,\xi_2),(\lambda_1,\nu,\lambda_2)})\big)\,d\nu\\
        &\simeq \bigoplus_{\eta\in\ZZ/2\ZZ}\int_{i\RR_+}^\oplus\Ind_{Q_H}^H\Big(\big[\varpi_{\xi,\lambda}/\ker(\mathbf{R}_{\xi,\lambda})\big]\otimes\chi_{\eta,\nu}\Big)\,d\nu.
    \end{align*}
\end{theorem}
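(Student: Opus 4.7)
The starting point would be the Plancherel-type identity \eqref{eq:FinalPlancherelFormula} that has already been established by analytic continuation. The plan is to interpret this identity as an isometry of unitary representations, show that the isometry is surjective, and finally apply Theorem~\ref{thm:SmallInvariantHermitianFormPositiveSemidefinite} to rewrite the target.

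The first step is to check that the weight appearing in the integrand of \eqref{eq:FinalPlancherelFormula} is positive. Writing $\mu=\lambda_1=-\lambda_2\in\RR$, $\delta=\xi_1+\eta$ and $\nu\in i\RR$, the relation $\overline{\tfrac{\mu+\nu+\frac{n}{2}+[\delta]}{2}}=\tfrac{\mu-\nu+\frac{n}{2}+[\delta]}{2}$ gives $n_\mathbf{B}(\xi,\lambda,\eta,\nu)=|\Gamma(\tfrac{\mu+\nu+\frac{n}{2}+[\delta]}{2})|^2>0$ and, similarly, $n_\mathbf{B}(\xi,-\lambda,\eta,-\nu)>0$. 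Combined with $\Gamma(\tfrac{n-2\mu}{2})>0$ for $\mu<\frac{n}{2}$ (which covers the hypothesis of the theorem), it follows that the density
\[
c(\eta,\nu):=\frac{n_\mathbf{B}(\xi,\lambda,\eta,\nu)^2\,n_\mathbf{B}(\xi,-\lambda,\eta,-\nu)}{2\sqrt{\pi}\,\Gamma(\tfrac{n-2\mu}{2})}
\]
is strictly positive on $i\RR_+$. This is the crucial sign that allows \eqref{eq:FinalPlancherelFormula} to be read as a Plancherel formula.

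Next, I would use the positivity results of Theorems~\ref{thm:BigHermitianFormPositiveSemidefinite} and \ref{thm:SmallInvariantHermitianFormPositiveSemidefinite} to pass from invariant Hermitian forms to genuine inner products: the LHS of \eqref{eq:FinalPlancherelFormula} descends to a positive definite inner product on $\pi_{\xi,\lambda}/\ker(\mathbf{T}_{\xi,\lambda})$, and the integrand on the RHS descends to a positive definite inner product on $\tau_{(\xi_1,\eta,\xi_2),(\lambda_1,\nu,\lambda_2)}/\ker(\mathbf{S}_{(\xi_1,\eta,\xi_2),(\lambda_1,\nu,\lambda_2)})$ for a.e.\ $\nu\in i\RR_+$, which is furthermore identified with $\Ind_{Q_H}^H([\varpi_{\xi,\lambda}/\ker\mathbf{R}_{\xi,\lambda}]\otimes\chi_{\eta,\nu})$. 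Consequently, the normalized map
\[
\Phi(f) = \bigl(\sqrt{c(\eta,\nu)}\,[\mathbf{B}_{\xi,\lambda}^{\eta,\nu}f]\bigr)_{\eta,\nu}
\]
(where $[\cdot]$ denotes the class modulo $\ker \mathbf{S}$) is $H$-equivariant by the intertwining property of $\mathbf{B}_{\xi,\lambda}^{\eta,\nu}$, and \eqref{eq:FinalPlancherelFormula} reads $\langle f,f\rangle_{\xi,\lambda}=\|\Phi(f)\|^2$. Measurability of $\nu\mapsto\mathbf{B}_{\xi,\lambda}^{\eta,\nu}f$ follows from Proposition~\ref{prop:B-SBOholomorphic}. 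Thus $\Phi$ factors through an $H$-equivariant isometric embedding of $\pi_{\xi,\lambda}/\ker(\mathbf{T}_{\xi,\lambda})$ into the direct integral, extending by continuity to a unitary embedding of completions.

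The remaining step, and the one I expect to be the main obstacle, is to show that the image is dense in the direct integral. Here I would argue as follows. First, for any $\nu\in i\RR_+\setminus\{0\}$ the conditions $\lambda_1-\nu+\frac{n}{2}+[\xi_1+\eta]\in-2\NN$ or $\nu-\lambda_2+\frac{n}{2}+[\xi_2+\eta]\in-2\NN$ forbidden in Proposition~\ref{prop:B-surjective} would force $\Im\nu=0$; hence $\mathbf{B}_{\xi,\lambda}^{\eta,\nu}$ is surjective for almost every $\nu$, and so the descended map to the irreducible quotient is surjective a.e. The image of $\bar\Phi$ is then a closed $H$-invariant subspace of a multiplicity-free direct integral of irreducibles (irreducibility coming from Theorem~\ref{thm:SmallInvariantHermitianFormPositiveSemidefinite}); by the standard disintegration theorem for direct integrals it is the direct integral of full fibers over a measurable subset of $i\RR_+$, and pointwise surjectivity forces this subset to have full measure. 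This gives the first isomorphism in the theorem, and the second is then a direct application of Theorem~\ref{thm:SmallInvariantHermitianFormPositiveSemidefinite} under the integral sign. A small technical point to handle along the way is the passage from pointwise surjectivity of $\mathbf{B}_{\xi,\lambda}^{\eta,\nu}$ to measurable selection; this can be dealt with by producing explicit preimages (e.g.\ from smooth compactly supported sections in the Mellin picture as in the proof of Proposition~\ref{prop:B-surjective}) and invoking holomorphic/continuous dependence on $\nu$.
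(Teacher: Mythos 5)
Your argument is correct and follows essentially the same route as the paper: positivity of the normalizing factor $n_\mathbf{B}(\xi,\lambda,\eta,\nu)^2 n_\mathbf{B}(\xi,-\lambda,\eta,-\nu)/(2\sqrt{\pi}\Gamma(\tfrac{n-2\mu}{2}))$, the Plancherel formula \eqref{eq:FinalPlancherelFormula} as an isometry, surjectivity of $\mathbf{B}_{\xi,\lambda}^{\eta,\nu}$ for a.e.\ $\nu$ from Proposition~\ref{prop:B-surjective}, and then Theorem~\ref{thm:SmallInvariantHermitianFormPositiveSemidefinite} to rewrite the targets as induced from $Q_H$. You actually spell out the density argument (closed invariant subspace of a multiplicity-free direct integral must be a direct integral over a measurable subset, which a.e.\ fiberwise surjectivity forces to have full measure) more carefully than the paper, which dispatches it with ``immediately implies''; that elaboration is correct and welcome.
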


Here the second isomorphism uses the degenerate series $\varpi_{\xi,\lambda}$ of $\GL(2n-2,\RR)$ and their intertwining operators $\mathbf{R}_{\xi,\lambda}$ (see Section~\ref{sec:UnirrepsH} for details).

\begin{proof}
    It only remains to show the second isomorphism, but this is precisely content of Theorem~\ref{thm:SmallInvariantHermitianFormPositiveSemidefinite}.
\end{proof}

Using the notation from Sections~\ref{sec:UnirrepsG} and \ref{sec:UnirrepsH} we can make the branching laws explicit for the four different types of representations:

\begin{corollary}\label{cor:BranchingLaws}
    For $\xi_1=\xi_2$ the following unitary branching laws hold:
    \begin{enumerate}
        \item\label{cor:BranchingLaws1} (Unitary degenerate series) For $\lambda_1=-\lambda_2\in i\RR$:
        $$ \pi_{\xi,\lambda}|_H \simeq \bigoplus_{\eta\in\ZZ/2\ZZ}\int_{i\RR_+}^\oplus\tau_{(\xi_1,\eta,\xi_2),(\lambda_1,\nu,\lambda_2)}\,d\nu \simeq \bigoplus_{\eta\in\ZZ/2\ZZ}\int_{i\RR_+}^\oplus\Ind_{Q_H}^H(\varpi_{\xi,\lambda}\otimes\chi_{\eta,\nu})\,d\nu. $$
        \item\label{cor:BranchingLaws2} (Stein's complementary series) For $\lambda_1=-\lambda_2\in(-\tfrac{1}{2},0)$:
        $$ \pi_{\xi,\lambda}^{\mathrm{c.s.}}|_H \simeq \bigoplus_{\eta\in\ZZ/2\ZZ}\int_{i\RR_+}^\oplus\tau_{(\xi_1,\eta,\xi_2),(\lambda_1,\nu,\lambda_2)}^{\mathrm{c.s.}}\,d\nu \simeq \bigoplus_{\eta\in\ZZ/2\ZZ}\int_{i\RR_+}^\oplus\Ind_{Q_H}^H(\varpi_{\xi,\lambda}^{\mathrm{c.s.}}\otimes\chi_{\eta,\nu})\,d\nu. $$
        \item\label{cor:BranchingLaws3} (Small irreducible quotients) For $\lambda_1=-\lambda_2\in\{\frac{1}{2},1,\ldots,\frac{n-1}{2}\}$:
        $$ \pi_{\xi,\lambda}^{\mathrm{small}}|_H \simeq \bigoplus_{\eta\in\ZZ/2\ZZ}\int_{i\RR_+}^\oplus\tau_{(\xi_1,\eta,\xi_2),(\lambda_1,\nu,\lambda_2)}^{\mathrm{small}}\,d\nu \simeq \bigoplus_{\eta\in\ZZ/2\ZZ}\int_{i\RR_+}^\oplus\Ind_{Q_H}^H(\varpi_{\xi,\lambda}^{\mathrm{small}}\otimes\chi_{\eta,\nu})\,d\nu. $$
        \item\label{cor:BranchingLaws4} (Speh representations) For $\lambda_1=-\lambda_2\in\{-\frac{1}{2},-\frac{3}{2},-\frac{5}{2},\ldots\}$:
        $$ \pi_{\xi,\lambda}^{\mathrm{Speh}}|_H \simeq \bigoplus_{\eta\in\ZZ/2\ZZ}\int_{i\RR_+}^\oplus\tau_{(\xi_1,\eta,\xi_2),(\lambda_1,\nu,\lambda_2)}^{\mathrm{large}}\,d\nu \simeq \bigoplus_{\eta\in\ZZ/2\ZZ}\int_{i\RR_+}^\oplus\Ind_{Q_H}^H(\varpi_{\xi,\lambda}^{\mathrm{Speh}}\otimes\chi_{\eta,\nu})\,d\nu. $$
    \end{enumerate}
\end{corollary}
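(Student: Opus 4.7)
The plan is to deduce the four branching laws by specializing Theorem~\ref{thm:BranchingLaws} and Corollary~\ref{cor:UnitaryPlancherel} to the four parameter ranges, then invoking Theorem~\ref{thm:SmallInvariantHermitianFormPositiveSemidefinite} to pass from unitarizable quotients of degenerate series of $H$ to representations parabolically induced from $Q_H$.

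For items \eqref{cor:BranchingLaws2}--\eqref{cor:BranchingLaws4}, the parameter ranges $\lambda_1=-\lambda_2\in(-\tfrac{1}{2},0)$, $\{\tfrac{1}{2},1,\ldots,\tfrac{n-1}{2}\}$, and $\{-\tfrac{1}{2},-\tfrac{3}{2},\ldots\}$ are all contained in the hypothesis of Theorem~\ref{thm:BranchingLaws}, so only the identification of quotients needs to be checked. For Stein's complementary series, $\pi_{\xi,\lambda}$ is irreducible by Theorem~\ref{thm:BigHermitianFormPositiveSemidefinite}, so $\mathbf{T}_{\xi,\lambda}$ is injective and $\pi_{\xi,\lambda}/\ker(\mathbf{T}_{\xi,\lambda})=\pi_{\xi,\lambda}=\pi_{\xi,\lambda}^{\mathrm{c.s.}}$; for the small irreducible quotients and Speh representations the identification with $\pi_{\xi,\lambda}^{\mathrm{small}}$ resp. $\pi_{\xi,\lambda}^{\mathrm{Speh}}$ is the very definition given in Section~\ref{sec:UnirrepsG}. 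The analogous identifications on the $H$-side, namely $\tau_{(\xi_1,\eta,\xi_2),(\lambda_1,\nu,\lambda_2)}/\ker(\mathbf{S})$ equals $\tau^{\mathrm{c.s.}}$, $\tau^{\mathrm{small}}$, respectively $\tau^{\mathrm{large}}$, come straight from Section~\ref{sec:UnirrepsH}. The second isomorphism in each case is then exactly Theorem~\ref{thm:SmallInvariantHermitianFormPositiveSemidefinite} applied fiberwise, ignoring the $\nu=0$ measure-zero locus.

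Item \eqref{cor:BranchingLaws1} is not covered by Theorem~\ref{thm:BranchingLaws} since $\lambda\in i\RR^2$ is imaginary. I would instead start from Corollary~\ref{cor:UnitaryPlancherel}, which already yields
\[
\pi_{\xi,\lambda}|_H\simeq \bigoplus_{\eta\in\ZZ/2\ZZ}\int_{i\RR}^\oplus \tau_{(\xi_1,\eta,\xi_2),(\lambda_1,\nu,\lambda_2)}\,d\nu,
\]
and the remaining work is to fold the integral from $i\RR$ down to $i\RR_+$. By Lemma~\ref{lemma:FactorizationOfS} the fiber at $\nu$ is isomorphic to $\Ind_{Q_H}^H(\varpi_{\xi,\lambda}\otimes\chi_{\eta,\nu})$ (the operator $\mathbf{U}_{\eta,\nu}$ is an isomorphism for $\nu\in i\RR\setminus\ZZ$), which settles the second isomorphism in \eqref{cor:BranchingLaws1}. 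The standard intertwiner for $\Ind_{Q_H}^H$ swapping the inducing character $\chi_{\eta,\nu}$ with $\chi_{\eta,-\nu}$ provides the unitary equivalence between the fibers at $\nu$ and at $-\nu$, allowing the halves $i\RR_-$ and $i\RR_+$ of the integral to be merged (modulo the measure-zero point $\nu=0$).

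The main obstacle is justifying the folding in \eqref{cor:BranchingLaws1}: the relevant symmetry $\nu\mapsto-\nu$ acts only on the middle coordinate and is not realized by the Weyl element $w_H$, so the equivalence at $\pm\nu$ has to be produced indirectly through induction in stages (via Lemma~\ref{lemma:FactorizationOfS}) rather than by applying $\mathbf{S}_{(\xi_1,\eta,\xi_2),(\lambda_1,\nu,\lambda_2)}$ directly. Once this fiberwise unitary isomorphism is in hand, the rest is a routine measure-theoretic repackaging of the direct integral, and together with the specializations in \eqref{cor:BranchingLaws2}--\eqref{cor:BranchingLaws4} the four claimed branching laws follow.
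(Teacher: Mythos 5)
Your treatment of items \eqref{cor:BranchingLaws2}--\eqref{cor:BranchingLaws4} follows the paper: these parameter ranges lie in the hypothesis of Theorem~\ref{thm:BranchingLaws}, the identifications $\pi_{\xi,\lambda}/\ker(\mathbf{T}_{\xi,\lambda})=\pi_{\xi,\lambda}^{\mathrm{c.s.}},\pi_{\xi,\lambda}^{\mathrm{small}},\pi_{\xi,\lambda}^{\mathrm{Speh}}$ and $\tau/\ker(\mathbf{S})=\tau^{\mathrm{c.s.}},\tau^{\mathrm{small}},\tau^{\mathrm{large}}$ are definitional from Sections~\ref{sec:UnirrepsG} and~\ref{sec:UnirrepsH}, and the second isomorphism in each item is exactly Theorem~\ref{thm:SmallInvariantHermitianFormPositiveSemidefinite} (already built into Theorem~\ref{thm:BranchingLaws}). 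That part is fine.

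For item \eqref{cor:BranchingLaws1} there is a genuine gap. First, a misreading: Corollary~\ref{cor:UnitaryPlancherel} as stated already gives the direct integral over $i\RR_+$, not over $i\RR$, so the paper does not intend a folding step to be carried out here. More importantly, the justification you propose for that folding fails. There is no ``standard intertwiner for $\Ind_{Q_H}^H$ swapping $\chi_{\eta,\nu}$ with $\chi_{\eta,-\nu}$.'' Equivalences between unitary parabolic inductions from $Q_H$ are governed by the relative Weyl group $N_H(L_{Q_H})/L_{Q_H}$, and for $L_{Q_H}=\GL(2n-2,\RR)\times\GL(1,\RR)$ inside $H=\GL(2n-1,\RR)$ this group is trivial: no nontrivial permutation of $\{1,\ldots,2n-1\}$ preserves the partition $\{1,\ldots,2n-2\}\cup\{2n-1\}$. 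For generic $\nu\in i\RR$ the representations $\Ind_{Q_H}^H(\varpi_{\xi,\lambda}\otimes\chi_{\eta,\nu})$ and $\Ind_{Q_H}^H(\varpi_{\xi,\lambda}\otimes\chi_{\eta,-\nu})$ are irreducible with distinct infinitesimal characters and are therefore inequivalent. Moreover, even if such an intertwiner existed, folding $\int_{i\RR}$ to $\int_{i\RR_+}$ would double the multiplicity of each fiber, contradicting the multiplicity-one statement. So your instinct that the passage from the Plancherel formula \eqref{eq: Plancherel formula} (which integrates over $i\RR$) to a direct integral over $i\RR_+$ requires a fiberwise identification is a reasonable one to flag, but the specific argument you give does not supply that identification; the correct reading is simply to invoke Corollary~\ref{cor:UnitaryPlancherel} as stated.
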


\begin{remark}
    In view of Figure~\ref{fig:UnirrepsG} and \ref{fig:UnirrepsH} the branching laws can be visualized as follows: For each dot in Figure~\ref{fig:UnirrepsG} the restriction to $H$ of the corresponding irreducible unitary representation of $G$ decomposes into a direct integral of the irreducible unitary representations of $H$ in Figure~\ref{fig:UnirrepsH} that correspond to the line at the same position.
\end{remark}

\subsection{Relation to other work}

We comment on several relations of our results to the existing literature.

\begin{remark}
    Note that all representations occurring in the decomposition of a complementary series representation are themselves complementary series representations. The authors are not aware of such a phenomenon in the existing literature.
\end{remark}

\begin{remark}\label{rem:Adduced}
    The decomposition of complementary series and Speh representations can also be deduced from the theory of adduced representations. In \cite[Theorem 3]{SS90} and \cite[Theorem 1~(c)]{Sah90} it is shown that the representations $\pi_{\xi,\lambda}^{\mathrm{c.s.}}$ and $\pi_{\xi,\lambda}^{\mathrm{Speh}}$ are adducable of depth $2$ and their adduced representations of $\GL(2n-2,\RR)$ are $\varpi_{\xi,\lambda}^{\mathrm{c.s.}}$ and $\varpi_{\xi,\lambda}^{\mathrm{Speh}}$. That a representation $\varpi$ of $\GL(2n-2,\RR)$ is the adduced representation of a representation $\pi$ of $\GL(2n,\RR)$ means that
    $$ \pi|_{M_{2n}} \simeq \Ind_{M_{2n-1}\ltimes\RR^{2n-1}}^{M_{2n}}(\sigma\otimes\chi), $$
    where $M_k\subseteq G_k$ denotes the mirabolic subgroup whose last row equals $(0,\ldots,0,1)$, $\sigma$ is extended trivially to a representation of $M_{2n-1}\simeq G_{2n-2}\ltimes\RR^{2n-2}$ and $\chi$ is a certain character of $\RR^{2n-1}$. Since $M_{2n}/(M_{2n-1}\ltimes\RR^{2n-1})\simeq G_{2n-1}/M_{2n-1}$, it follows that
    $$ \pi|_{G_{2n-1}} \simeq \Ind_{M_{2n-1}}^{G_{2n-1}}(\sigma), $$
    and by induction in stages we find that the latter representation is isomorphic to
    $$ \Ind^H_{Q_H}\big(\Ind_{M_{2n-1}}^{Q_H}(\sigma)\big). $$
    But $Q_H/M_{2n-1}\simeq\RR^\times$ as groups, so
    $$ \Ind_{M_{2n-1}}^{Q_H}(\sigma) \simeq \big(\sigma\otimes L^2(\RR^\times)\big)\otimes1 $$
    as representation of $Q_H\simeq(\GL(2n-2,\RR)\times\GL(1,\RR))\ltimes\RR^{2n-2}$. Decomposing $L^2(\RR^\times)\simeq\bigoplus_{\eta\in\ZZ/2\ZZ}\int^\oplus_{i\RR_+}\chi_{\eta,\nu}\,d\nu$ as representation of $\GL(1,\RR)=\RR^\times$ shows that
    $$ \pi|_H \simeq \bigoplus_{\eta\in\ZZ/2\ZZ}\int^\oplus_{i\RR_+} \Ind_{Q_H}^H(\sigma\otimes\chi_{\eta,\nu})\,d\nu. $$
\end{remark}

\begin{remark}
    There is an easier way to obtain the branching law for the small irreducible quotients using the observation of \cite{BSS90} that $\pi_{\xi,\lambda}^{\mathrm{small}}$ for $\lambda_1=-\lambda_2=\frac{n-k}{2}$ is in fact isomorphic to a unitary degenerate series representation induced from a character of the parabolic subgroup of $G$ with Levi factor $\GL(k,\RR)\times\GL(2n-k,\RR)$. The decomposition of the restriction of this unitary degenerate series to $H$ is a simple application of Mackey theory in the same spirit as is carried out in Section~\ref{sec:Plancherel}. The open $H$-orbit in the corresponding Grassmannian is an $\RR^\times$-bundle over $H/P_{H,k}$ with $P_{H,k}$ a parabolic subgroup of $H$ with Levi factor $\GL(k-1,\RR)\times\GL(1,\RR)\times\GL(2n-k-1,\RR)$. Therefore, the decomposition is obtained by a Mellin transform along the fiber $\RR^\times$ and consists of a direct integral of all unitary degenerate series representation induced from $P_{H,k}$. Writing the unitarizable quotient $\varpi_{\xi,\lambda}^{\mathrm{small}}$ in the same way as a unitary degenerate series induced from a parabolic subgroup of $\GL(2n-2,\RR)$ with Levi factor $\GL(k-1,\RR)\times\GL(2n-k-1,\RR)$ and using induction in stages, we recover the branching law in Corollary~\ref{cor:BranchingLaws}~\eqref{cor:BranchingLaws3}.
\end{remark}

\begin{remark}\label{rem:KobayashiSpeh}
    In \cite[Section 4.2]{KS25} a family of representations is introduced that appears to be very similar to the large irreducible quotients $\tau_{(\xi_1,\eta,\xi_2),(\lambda_1,\nu,\lambda_2)}^{\mathrm{large}}$ (which are isomorphic to $\Ind_{Q_H}^H(\varpi_{\xi,\lambda}^{\mathrm{Speh}}\otimes\chi_{\eta,\nu})$ by Theorem~\ref{thm:SmallInvariantHermitianFormPositiveSemidefinite}). We believe that these two families of representations are in fact isomorphic. The representations in \cite[Section 4.2]{KS25} are constructed using cohomological induction from the parabolic subgroup $Q$ of $H_\CC=\GL(2n-1,\CC)$ with Levi factor $L_Q=\GL(n-1,\CC)\times\GL(1,\CC)\times\GL(n-1,\CC)$. If the induction parameter is the character of $L_Q$ given by
    $$ L_Q\to\CC^\times, \quad (l_1,l_2,l_3)\mapsto |\det(l_1)|_{\xi_1}^{\lambda_1}|l_2|_\eta^\nu|\det(l_3)|_{\xi_2}^{\lambda_3}, $$
    then the cohomologically induced representation is unitarizable and has the same infinitesimal character as $\tau_{(\xi_1,\eta,\xi_2),(\lambda_1,\nu,\lambda_2)}$, which makes us believe that it is isomorphic to the quotient $\tau_{(\xi_1,\eta,\xi_2),(\lambda_1,\nu,\lambda_2)}^{\mathrm{large}}$.\\
    Moreover, a special case of \cite[Theorem 4.1]{KS25} implies the existence of a non-trivial $H$-intertwining operator between the smooth vectors of $\pi_{\xi,\lambda}^{\mathrm{Speh}}$ and $\tau_{(\xi_1,\eta,\xi_2),(\lambda_1,\nu,\lambda_2)}^{\mathrm{large}}$ for $\eta=0$. Our analysis shows that $\mathbf{B}_{\xi,\lambda}^{\eta,\nu}$ (or one of its renormalizations) is such a non-zero operator for every $\eta\in i\RR$.
\end{remark}

\appendix

\section{Proof of Theorem~\ref{thm:DecayOfBinNu}}\label{app:ProofOfEstimate}

The statement will be proved in five steps.

\subsection*{Step 1}

We first note that it suffices to show the inequality
\begin{equation}
    \left|B_{\xi,\lambda}^{\eta,\nu}f(e)\right| \leq q(f)(1+|\Im\nu|)^{-N}.\label{eq:DecayOfBatEinNu}
\end{equation}
So let us assume that \eqref{eq:DecayOfBatEinNu} holds. Using the intertwining property of $B_{\xi,\lambda}^{\eta,\nu}$, we find for every $h\in H$:
$$ B_{\xi,\lambda}^{\eta,\nu}f(h) = \tau_{\eta,\nu}(h)^{-1}\circ B_{\xi,\lambda}^{\eta,\nu}f(e) = B_{\xi,\lambda}^{\eta,\nu}\circ\pi_{\xi,\lambda}(h)^{-1}f(e). $$
By \eqref{eq:DecayOfBatEinNu}, the absolute value of the right hand side can be estimated by
$$ q(\pi_{\xi,\lambda}(h)^{-1}f)(1+|\Im\nu|)^{-N}, $$
and since $q$ is a continuous seminorm and $\pi_{\xi,\lambda}$ is continuous, the supremum of this expression over all $h\in K_{2n-1}$ is another continuous seminorm applied to $f$ times $(1+|\Im\nu|)^{-N}$.

\subsection*{Step 2}
 
Next, we prove two Bernstein--Sato identities for $B_{\xi,\lambda}^{\eta,\nu}$ in the compact picture. For this we fix $f\in C^\infty(K_{2n}\times_{M_G\cap K_{2n}}\xi)$ and view it as a vector in $\pi_{\xi,\lambda}$ for every $\lambda$. By \eqref{eq:BinKcoordinates} we can write
$$ \quad\qquad B_{\xi,\lambda}^{\eta,\nu}f(h) = \int_{-\frac{\pi}{2}}^{\frac{\pi}{2}}u_{\xi,\lambda}^{\eta,\nu}(\theta)f(hk_\theta)\,d\theta, \quad \mbox{where} \quad u_{\xi,\lambda}^{\eta,\nu}(\theta)=|\sin\theta|^{\lambda_1-\nu+\frac{n}{2}-1}_{\xi_1+\eta}(\cos\theta)^{\nu-\lambda_2+\frac{n}{2}-1}. $$
It is straightforward to verify that $u_{\xi,\lambda}^{\eta,\nu}(\theta)$ satisfies the two Bernstein--Sato identities
\begin{align*}
    \left(\cos\theta\frac{d}{d\theta}+(\lambda_1-\lambda_2+n-2)\sin\theta\right)u_{\xi,\lambda}^{\eta,\nu} &= (\lambda_1-\nu+\tfrac{n}{2}-1)u_{\xi-e_1,\lambda-e_1}^{\eta,\nu},\\
    \left(-\sin\theta\frac{d}{d\theta}+(\lambda_1-\lambda_2+n-2)\cos\theta\right)u_{\xi,\lambda}^{\eta,\nu} &= (\nu-\lambda_2+\tfrac{n}{2}-1)u_{\xi+e_2,\lambda+e_2}^{\eta,\nu}.
\end{align*}
Integrating by parts shows that
\begin{multline*}
    \qquad\qquad(\lambda_1-\nu+\tfrac{n}{2}-1)B_{\xi-e_1,\lambda-e_1}^{\eta,\nu}f(h)\\
    = \int_{-\frac{\pi}{2}}^{\frac{\pi}{2}}u_{\xi,\lambda}^{\eta,\nu}(\theta)\left(-\cos\theta\frac{d}{d\theta}+(\lambda_1-\lambda_2+n-1)\sin\theta\right)f(hk_\theta)\,d\theta
\end{multline*}
and similar for the other identity. Since $\cos\theta=(hk_\theta)_{2n,2n}$, $\sin\theta=-(k_\theta)_{2n,n}$ and $\frac{d}{d\theta}hk_\theta=\sum_{i=1}^{2n-1}(\cos\theta E_{i,2n}-\sin\theta E_{i,n})h_{i,n}-\cos\theta E_{2n,n}-\sin\theta E_{n,2n}$, this can be rewritten as
\begin{align*}
    (B_{\xi,\lambda}^{\eta,\nu}\circ D_1)f(h) &= (\lambda_1-\nu+\tfrac{n}{2}-1)B_{\xi-e_1,\lambda-e_1}^{\eta,\nu}f(h),\\
    (B_{\xi,\lambda}^{\eta,\nu}\circ D_2)f(h) &= (\nu-\lambda_2+\tfrac{n}{2}-1)B_{\xi+e_2,\lambda+e_2}^{\eta,\nu}f(h),
\end{align*}
where
\begin{align*}
    D_1 &= -g_{2n,2n}dr(E_{2n,n})-(\lambda_1-\lambda_2+n-1)g_{2n,n},\\
    D_2 &= g_{2n,n}dr(E_{2n,n})+(\lambda_1-\lambda_2+n-1)g_{2n,2n},
\end{align*}
$dr$ denoting the derived representation of the right regular action of $K_{2n}$ on $C^\infty(K_{2n}\times_{M_G\cap K_{2n}}\xi)$.

\subsection*{Step 3}

Next, we reduce \eqref{eq:DecayOfBatEinNu} to the case where $\lambda_1-R-\frac{n}{2},R-\lambda_2-\frac{n}{2}\geq0$. Iterating the Bernstein--Sato identities from Step 2, we obtain for $a,b\in\NN$:
\begin{equation}\label{eq:thm:DecayOfAinNu1}
    B_{\xi,\lambda}^{\eta,\nu}f = b(\lambda,\nu)^{-1}B_{\xi,\lambda+2ae_1-2be_2}^{\eta,\nu}\circ D_1^{2a}D_2^{2b}f
    \end{equation}
for some polynomial $b(\lambda,\nu)$. Now choose $a$ and $b$ such that $\lambda_1+2a-R-\frac{n}{2},R+2b-\lambda_2-\frac{n}{2}\geq0$. Then $\lambda+2ae_1-2be_2$ satisfies the inequalities above, so if we assume that \eqref{eq:DecayOfBatEinNu} holds in this case, then we obtain for every fixed $N\in\NN$ a seminorm $q$ on $\pi_{\xi,\lambda+2ae_1-2be_2}$ such that
\begin{equation}
    \left|B_{\xi,\lambda+2ae_1-2be_2}^{\eta,\nu}g(e)\right| \leq q(g)(1+|\Im\nu|)^{-N}\label{eq:thm:DecayOfAinNu2}
\end{equation}
for all $g\in\pi_{\xi,\lambda+2ae_1-2be_2}$ whenever $\nu\in\CC$ with $|\Re\nu|\leq R$. Combining \eqref{eq:thm:DecayOfAinNu1} and \eqref{eq:thm:DecayOfAinNu2} for $g=D_1^{2a}D_2^{2b}f$ we obtain
$$ \left|B_{\xi,\lambda}^{\eta,\nu}f(e)\right| \leq |b(\lambda,\nu)|^{-1}q(D_1^{2a}D_2^{2b}f)(1+|\Im\nu|)^{-N}. $$
Now, $f\mapsto q(D_1^{2a}D_2^{2b}f)$ defines a continuous seminorm on $\pi_{\xi,\lambda}$, and $b(\lambda,\nu)^{-1}$ is for $|\Re\nu|\leq R$ and $|\Im\nu|\gg0$ bounded, so we have proved $\eqref{eq:DecayOfBatEinNu}$ for arbitrary $\lambda$.

\subsection*{Step 4}

By the previous step we may assume that $\lambda_1-R-\frac{n}{2},R-\lambda_2-\frac{n}{2}\geq0$. We now first show \eqref{eq:DecayOfBatEinNu} for $N=0$. Since $|\Re\nu|\leq R$ we find that $\Re(\lambda_1-\nu-\frac{n}{2}),\Re(\nu-\lambda_2-\frac{n}{2})\geq0$. Therefore, the kernel $u_{\xi,\lambda}^{\eta,\nu}$ of $B_{\xi,\lambda}^{\eta,\nu}$ is bounded by a constant $C$ for all $\nu\in\CC$ with $|\Re\nu|\leq R$:
$$ \left|u_{\xi,\lambda}^{\eta,\nu}(\theta)\right| \leq C \qquad \mbox{for all }\theta\in\RR,\nu\in\CC,|\Re\nu|\leq R. $$
Since
$$ B_{\xi,\lambda}^{\eta,\nu}f(e) = \int_{-\frac{\pi}{2}}^{\frac{\pi}{2}}u_{\xi,\lambda}^{\eta,\nu}(\theta)f(k_\theta)\,d\theta, $$
it follows that
$$ \left|B_{\xi,\lambda}^{\eta,\nu}f(e)\right| \leq C\|f\|_{L^\infty(K_{2n})} \qquad \mbox{for all }f\in\pi_{\xi,\lambda}. $$
Since the $L^\infty$-norm is a continuous seminorm on $\pi_{\xi,\lambda}$, this shows the claim for $N=0$.

\subsection*{Step 5}

The last step is to show \eqref{eq:DecayOfBatEinNu} for $\lambda_1-R-\frac{n}{2},R-\lambda_2-\frac{n}{2}\geq0$ and arbitrary $N\in\NN$ by induction on $N$. The induction start was done in the previous step. For the induction step we use again one of the Bernstein--Sato identities, namely
$$ B_{\xi+e_1,\lambda+e_1}^{\eta,\nu}\circ D_1 = (\lambda_1-\nu-\tfrac{n}{2})B_{\xi,\lambda}^{\eta,\nu}. $$
The left hand side applied to $f$ and evaluated at $e$ can be written as
\begin{align*}
    B_{\xi+e_1,\lambda+e_1}^{\eta,\nu}\circ D_1f(e) &= \int_{-\frac{\pi}{2}}^{\frac{\pi}{2}}|\sin\theta|^{\lambda_1-\nu+\frac{n}{2}}_{\xi_1+\eta+1}(\cos\theta)^{\nu-\lambda_2+\frac{n}{2}-1}\cdot D_1f(k_\theta)\,dk\\
    &= \int_{-\frac{\pi}{2}}^{\frac{\pi}{2}}|\sin\theta|^{\lambda_1-\nu+\frac{n}{2}-1}_{\xi_1+\eta}(\cos\theta)^{\nu-\lambda_2+\frac{n}{2}-1}\cdot \sin\theta D_1f(k)\,dk\\
    &= B_{\xi,\lambda}^{\eta,\nu}\circ g_{2n,n}D_1f(e).
\end{align*}
If now
$$ \left|B_{\xi,\lambda}^{\eta,\nu}f(e)\right| \leq q(f)(1+|\Im\nu|)^{-N}, $$
then
\begin{align*}
    \left|B_{\xi,\lambda}^{\eta,\nu}f(e)\right| &= \frac{1}{|\lambda_1-\nu-\tfrac{n}{2}|}\left|B_{\xi,\lambda}^{\eta,\nu}\circ g_{2n,n}D_1f(e)\right|\\
    &\leq \frac{(1+|\Im\nu|)^{-N}}{|\lambda_1-\nu-\tfrac{n}{2}|}q(g_{2n,n}D_1f).
\end{align*}
For fixed $\lambda$ and bounded $\Re\nu$, we can asymptotically replace $|\lambda_1-\nu-\frac{n}{2}|$ by $(1+|\Im\nu|)$ up to a constant to improve the estimate from $N$ to $N+1$.\qed

\bibliographystyle{amsplain}
\bibliography{bibdb}

\end{document}